\documentclass[11pt]{article} 
\usepackage[utf8]{inputenc} 

\usepackage[margin=1in]{geometry} 
\usepackage{graphicx} 

\usepackage{dsfont} 

\usepackage{booktabs} 
\usepackage{array} 
\usepackage{paralist} 
\usepackage{verbatim} 
\usepackage{mathrsfs}
\usepackage{amssymb}
\usepackage{amsthm}
\usepackage{amsmath,amsfonts,amssymb}
\usepackage{esint}
\usepackage{graphics}
\usepackage{enumerate}
\usepackage{mathtools}
\usepackage{xfrac}
\usepackage{subcaption}
\usepackage{stmaryrd}
 \usepackage{mathabx}

\usepackage[dvipsnames]{xcolor}
\usepackage[colorlinks=true, pdfstartview=FitV, linkcolor=blue, citecolor=blue, urlcolor=blue]{hyperref}
\usepackage[normalem]{ulem}

\numberwithin{equation}{section}
\numberwithin{figure}{section}

\newtheorem{theorem}{Theorem}[section]

\newtheorem{corollary}[theorem]{Corollary}
\newtheorem{proposition}[theorem]{Proposition}
\newtheorem{lemma}[theorem]{Lemma}
\theoremstyle{definition}
\newtheorem{definition}[theorem]{Definition}

\newtheorem{remark}[theorem]{Remark}

\newcommand{\W}{\mathbb{W}}

\newcommand*{\N}{\ensuremath{\mathbb{N}}}

\newcommand*{\Z}{\ensuremath{\mathbb{Z}}}

\newcommand*{\R}{\ensuremath{\mathbb{R}}}

\newcommand{\eps}{\varepsilon}

\renewcommand*{\tilde}{\widetilde}

\renewcommand{\P}{\ensuremath{\mathbb{P}}}

\newcommand{\ep}{\eps}

\DeclareSymbolFont{boldoperators}{OT1}{cmr}{bx}{n}
\SetSymbolFont{boldoperators}{bold}{OT1}{cmr}{bx}{n}
\usepackage{accents}
\newcommand\thickbar[1]{\accentset{\rule{.45em}{.6pt}}{#1}}
\renewcommand{\bar}{\thickbar}

\def\XXint#1#2#3{{\setbox0=\hbox{$#1{#2#3}{\int}$}
\vcenter{\hbox{$#2#3$}}\kern-.5\wd0}}

\let\originalleft\left
\let\originalright\right
\renewcommand{\left}{\mathopen{}\mathclose\bgroup\originalleft}
\renewcommand{\right}{\aftergroup\egroup\originalright}

\newcommand{\indc}{\mathds{1}}

\newcommand{\cond}{\,|\,}

\newcommand{\E}{\mathbb{E}}

\renewcommand{\phi}{\varphi}

\newcommand{\solm}{\mathbb{X}}
\newcommand{\coupm}{\mathbb{Y}}
\newcommand{\volm}{\mathbb{U}}

\usepackage{titlesec}

\newcommand{\addperiod}[1]{#1.}
\titleformat{\section}
   {\centering\normalfont\Large}{\thesection.}{0.5em}{}
\titleformat*{\subsection}{\bfseries}
\titleformat{\subsubsection}[runin]
  {\normalfont\bfseries}
  {\thesubsubsection.}
  {0.5em}
  {\addperiod}
\titleformat*{\subsubsection}{\normalfont\itshape}
\titleformat*{\paragraph}{\bfseries}
\titleformat*{\subparagraph}{\large\bfseries}

\title{Sharp pathwise nonuniqueness for additive SDEs}

\author{Elias Hess-Childs\thanks{Department of Mathematical Sciences, Carnegie Mellon University.
{\footnotesize \href{mailto:ehesschi@andrew.cmu.edu}{ehesschi@andrew.cmu.edu}.}
}
\and 
Keefer Rowan\thanks{Institute of Mathematics, \'Ecole Polytechnique F\'ed\'erale de Lausanne.
{\footnotesize \href{mailto:keefer.rowan@epfl.ch}{keefer.rowan@epfl.ch}.}
}
}
\date{\today}

\usepackage[nottoc,notlot,notlof]{tocbibind}

\begin{document}

\maketitle

\begin{abstract}
    We construct a family of velocity fields demonstrating the sharpness of the classical Zvonkin--Veretennikov--Davie strong well-posedness by noise regime. We consider stochastic differential equations driven by Brownian noise with drift $u$ and show that for any $\alpha<0$, there exists a velocity field $u \in L^\infty_t C^\alpha_x$ that admits a unique weak solution but does not satisfy pathwise uniqueness (and hence has no strong solutions). This contrasts with the case $\alpha \geq 0$, for which the existence of a unique strong solution is guaranteed. The velocity field construction is random, and the proof essentially uses central limit theorem scaling through the Berry--Esseen theorem. We also give natural extensions to non-Brownian driving noises, including nonuniqueness for arbitrary driving noises with certain H\"older regularities and an analogous sharpness of the strong well-posedness by noise regime for fractional Brownian motions.
\end{abstract}


\section{Main results and background}

We are interested in uniqueness and nonuniqueness of solutions to the (generalized) stochastic differential equation
\begin{equation}
    \label{eq:main}
    \begin{cases}
    dX_t = u(t,X_t)\,dt + dW_t,
    \\X_0=y,
    \end{cases}
\end{equation}
where $y\in\R^d$. We suppose throughout that the drift $u \in L^1([0,T], C^0(\R^d))$ and that the \textit{driving noise} $W \in C^0([0,T],\R^d).$ The hypothesis $u \in L^1_t C^0_x$ allows us to work in a regime where solutions can be defined without regularization and without stochastic analysis. In particular, we use the following basic notion of solution. 

\begin{definition}
    We say that $X \in C^0([0,T],\R^d)$ is a \textit{Carath\'eodory solution} to~\eqref{eq:main} for drift $u\in L^1([0,T],C^0(\R^d))$ and (deterministic) $W\in C^0([0,T],\R^d)$ if for all $t\in[0,T]$,
    \[ X_t = y + \int_0^t u(s,X_s)\,ds + W_t -W_0.\]
\end{definition}

We call such solutions Carath\'eodory since by (a standard generalization of) the Carath\'eodory existence theorem for ODEs, there always exist Carath\'eodory solutions to~\eqref{eq:main} under our hypotheses on $u$ and $W$.

The groundbreaking work of~\cite{davie_uniqueness_2007} proved that for fixed $y \in \R^d$, if $u \in L^\infty_{t,x}$ and $W$ is a standard Brownian motion, then a.s.\ in $W$,~\eqref{eq:main} has a unique Carath\'eodory solution. The property of having a unique Carath\'eodory solution a.s.\ in the driving noise is known as \textit{path-by-path uniqueness}. The path-by-path result of \cite{davie_uniqueness_2007} is a strengthening of the classical works~\cite{zvonkin_transformation_1974,veretennikov_strong_1981}, which prove the existence and uniqueness of probabilistically strong solutions under the same hypotheses. The form of uniqueness proved by~\cite{zvonkin_transformation_1974,veretennikov_strong_1981} is known as \textit{pathwise uniqueness}. These different uniqueness notions (together with \textit{weak uniqueness}) will be more precisely defined and discussed in Subsection~\ref{subsec:uniqueness_def}. The phenomenon of~\eqref{eq:main} enjoying well-posedness for suitable driving noises despite well-posedness failing without a driving noise is known as \textit{regularization by noise.}

These foundational results have catalyzed a substantial literature on regularization by noise, which will be further discussed in Section~\ref{s:previous results}. However, the sharpness of~\cite{zvonkin_transformation_1974,veretennikov_strong_1981,davie_uniqueness_2007} in regularity spaces has long remained open, which we resolve with our main result. In particular, we show that for every $\rho>0$, there is a velocity field $u \in L^\infty_t C^{-\rho}_x$ such that pathwise (and thus also path-by-path) uniqueness fails for~\eqref{eq:main} with Brownian driving noise. We use the following definition for $C^{-\rho}_x$.

\begin{definition}
\label{def:generalized Holder spaces}
    For $\alpha\in [0,1)$, we take $\|f\|_{C^\alpha(\R^d)}$ as the usual H\"older norm. For $\rho \in (0,1]$, we take $\|f\|_{C^{-\rho}(\R^d)} := \inf_{\substack{\nabla \cdot g = f}} \|g\|_{C^{1-\rho}(\R^d)}.$ For general $s < -1$, we then recursively define $\|f\|_{C^{s}(\R^d)} := \inf_{\substack{\nabla \cdot g = f}} \|g\|_{C^{1+s}(\R^d)}$.\footnote{We note that for $s \not \in \Z$, these definitions of $C^s$ coincide with the homogeneous Besov space $\dot B^s_{\infty,\infty}$~\cite[Proposition 2.30 and Theorem 2.36]{bahouri_fourier_2011}.}
\end{definition}

By~\cite{flandoli_multidimensional_2017}, weak uniqueness holds in the regime $\rho \in (0,1/2)$. As such, this result gives the first (to our knowledge) example of pathwise nonuniqueness in the presence of weak uniqueness for additive noise equations; see Section~\ref{sec:contributions} for further discussion. We emphasize that, here and throughout, the velocity field $u$ is independent of the driving noise $W$.

\begin{theorem}
    \label{thm:main-brownian} For all $\rho\in (0,1/2)$, there exists a random velocity field $u: [0,1]  \times \R^2 \to \R^2$ such that we have the sure bound that $\|u(t,\cdot)\|_{C^{-\rho}(\R^2)} \leq 2$ for all $t\in [0,1]$ and $u \in L^1([0,1], C^0(\R^2)).$ Then, for $W$ an independent standard Brownian motion on $\R^2$, the velocity field $u$ has the following properties.
    \begin{enumerate}
        \item For any deterministic initial condition $y \in \R^2$, the ODE~\eqref{eq:main}---almost surely in $(u,W)$---has nonunique Carath\'eodory solutions on $[0,\ep]$ for every $\ep>0$,
        \item For all $y \in \R^2$, almost surely in $u$, the SDE~\eqref{eq:main} has a unique weak solution but exhibits pathwise nonuniqueness on $[0,\ep]$ for every $\ep>0$.
        \item The pathwise nonuniqueness is ``maximal'', in the sense that for all $y \in \R^2$, almost surely in $u$, for every $\ep>0,$ there exists a tuple $(X^1,X^2, W)$ such that $(X^1,W)$ and $(X^2,W)$ are both weak solutions to~\eqref{eq:main} and almost surely $X^1|_{[0,\ep]} \ne X^2|_{[0,\ep]}$.
    \end{enumerate}
\end{theorem}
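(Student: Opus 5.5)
We plan to build $u$ as a random multiscale superposition of localized "splitting" blocks, with each block living on a dyadic time–space scale. Fix $\rho\in(0,1/2)$ and scales $\tau_n = 2^{-n}$ in time, with spatial scales $\ell_n = \tau_n^{1/2}$ matching diffusive scaling. On the time interval $I_n=[\tau_{n+1},\tau_n]$ we place, near the current plausible location of the solution, a velocity field of size $A_n$ with spatial oscillation at scale $\ell_n$ (or finer). Its $C^{-\rho}$ norm is then $\approx A_n \ell_n^{\rho}$. Normalizing this to be $\le 2$ permits $A_n \approx \ell_n^{-\rho}$. The displacement produced over $I_n$ is therefore $A_n\tau_n \approx \tau_n^{1-\rho/2}$. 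This displacement is larger than the natural $\tau_n^{1/2}$-scale by a factor that is not enough by itself. To gain the missing factor, each block is made of many independent random sub-blocks whose signs are random. By CLT scaling, their effect on a single trajectory is then of order $\sqrt{\text{number}}$ times a single contribution, rather than cancelling.

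\textbf{Step 1 (the splitting mechanism).} We design a single block, e.g.\ a shear/hyperbolic-type field on $\R^2$ on a short time interval, with two key properties. First, starting from two nearby points (or from one point, using the degeneracy of the drift), it pushes two trajectories apart to macroscopic distance $\gtrsim \ell_n$. Second, conditional on the noise, this happens with probability bounded below uniformly in $n$. Randomness of $u$ (independent signs and shifts) makes the field generic. We then quantify, via Berry--Esseen applied to the sum of independent sub-block contributions along a Brownian path, that the law of the accumulated drift is close to Gaussian with variance large compared to the Brownian increment at that scale. Because the sub-blocks are independent of $W$ and the Brownian path samples them nearly independently, the sum is a sum of nearly i.i.d.\ terms. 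Berry--Esseen then gives quantitative anticoncentration and separation probabilities.

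\textbf{Step 2 (nonunique Carathéodory solutions, part 1).} We use a backward-in-time construction. We show there are two solutions from $y$ that separate on $[0,\ep]$. Equivalently, solutions started at two distinct points $z_1\ne z_2$ at some small time can both be traced back to $y$. The blocks at scales $n\ge N$ act like a "fan", so we argue that for infinitely many $n$ a splitting event occurs. Each event has probability $\ge c>0$, and the events are nearly independent across scales because of the independent randomness of $u$ at different scales. Borel--Cantelli (second version, via near-independence) then gives that splitting happens at arbitrarily small times a.s., for every $\ep$. Continuity of $u$ on each block, together with $u\in L^1_tC^0_x$ (the blocks are smooth and $\sum_n A_n\tau_n<\infty$), makes these genuine Carathéodory solutions; existence along each branch follows from Peano/Carathéodory.

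\textbf{Step 3 (weak uniqueness and pathwise nonuniqueness, parts 2--3).} Weak uniqueness for a.e.\ fixed $u$ is given by \cite{flandoli_multidimensional_2017}, since $\rho<1/2$ and $\|u\|_{L^\infty C^{-\rho}}\le 2$. For parts 2 and 3, we fix a realization of $u$ in the full-measure set where the Step 2 statement holds for a.e.\ $W$; this set exists by Fubini. We then use the two Carathéodory branches to produce measurable selections $X^1(W)$ and $X^2(W)$, e.g.\ the "maximal" and "minimal" branch selected along the splitting ordering, which are adapted by construction. Each $(X^i,W)$ is a weak solution, and $X^1\neq X^2$ a.s.\ on $[0,\ep]$. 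This gives maximal pathwise nonuniqueness. By Yamada--Watanabe it then also rules out strong solutions.

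\textbf{Main obstacle.} The main obstacle is Step 1: a quantitative estimate showing that a drift with small negative regularity can separate two Brownian-driven trajectories with probability bounded below at every scale. The difficulty is that $C^{-\rho}$ smallness forces oscillation, and naively oscillation averages out. The Berry--Esseen argument must show that the sum of random contributions has fluctuations dominating both the noise and the error terms. Adaptedness and measurability in Step 3 are the secondary difficulty.
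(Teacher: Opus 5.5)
Your proposal has two genuine gaps: one in the quantitative mechanism, and one in the stochastic step.

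\textbf{The scaling in Step 1 cannot produce separation.} With total time $\tau_n=\ell_n^2$ at spatial scale $\ell_n$ and amplitude $\ell_n^{-\rho}$, the coherent displacement $A_n\tau_n\approx\ell_n^{2-\rho}$ is \emph{smaller} than $\ell_n$, not larger. Splitting the same time budget into $K$ independent sub-blocks only makes it worse, since the CLT gives $\sqrt{K}\cdot A_n\tau_n/K=A_n\tau_n/\sqrt{K}$; randomization beats cancellation, not coherent drift. In the paper the gain comes from decoupling two time scales. Each sub-block lasts $\delta_n=2^{-(2+\rho/2)n}$, which is \emph{shorter} than the diffusive time of its spatial scale $2^{-n}$, so a fixed, merely H\"older $W$ moves particles by $\ll 2^{-n}$ during one sub-block. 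But there are $N_n\approx 2^{(2-3\rho/4)n}$ sub-blocks, so the total time spent at scale $2^{-n}$ is $\approx 2^{-5\rho n/4}$. This is \emph{far longer} than diffusive and is limited only by $\sum_n N_n\delta_n<\infty$. The Berry--Esseen sum is then $\approx 2^{-n}2^{\rho n/8}\gg 2^{-n}$.

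A per-scale splitting probability bounded below, combined with Borel--Cantelli, also does not give nonuniqueness. Since $u$ is smooth on $(0,1]$, all nonuniqueness must come from the cascade, and a separation of size $\ell_n$ created near time $\tau_n$ can be shrunk again by the coarser scales. So you cannot extract two distinct limit solutions from $y$. What is needed is that the separation \emph{doubles} at every scale, in alternating directions, with failure probability summable in $n$ (here $64\cdot 2^{-\rho n/8}$). This yields macroscopic separation at a fixed time $T^n$, uniformly as the starting time and the initial distance go to zero, with the limits taken in the right order.

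The exact independence that Berry--Esseen requires comes from using a single shear per sub-block. The transverse separation is then frozen, so for fixed $W$ the horizontal increments are independent and mean zero by the random phase. It does not come from the Brownian path ``sampling nearly independently''.

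\textbf{Step 3 is impossible as stated.} Suppose $X^1(W),X^2(W)$ were measurable, adapted selections solving the equation. Then each $(X^i,W)$ would be a strong solution. The measures $\delta_{X^1(W)}(dX)\W(dW)\neq\delta_{X^2(W)}(dX)\W(dW)$ would then be two distinct weak solutions, contradicting the weak uniqueness you cite. Under the theorem's own conclusions no adapted selection of branches exists; a ``maximal'' branch necessarily looks at the future of $W$.

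The missing idea is to work with laws rather than selections:
\begin{enumerate}
\item For fixed $u$, couple the smooth flows started from $(T^m,x)$ and $(T^m,y)$ with the same $W$.
\item Pass to a limit using compactness of weak solutions. Regularity of the driving noise is exactly what preserves non-anticipativity in the limit. This gives a $\coupm$ whose marginals are weak solutions and with $\coupm(X^1|_{[0,\ep]}=X^2|_{[0,\ep]})\le\gamma$.
\item Under weak uniqueness, both marginals equal the unique $\solm$. This forces $\pi^\ep_*\solm(d\tilde X\cond W)$ to have almost surely no atom of mass $>1/2$.
\item A quantile coupling with a half-shift then produces a coupling that charges the diagonal with zero mass. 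Here $X^1,X^2$ are genuinely random given $W$, as weak-but-not-strong requires.
\end{enumerate}
Part 1 comes from mixing such couplings with weights $2^{-k}$. Finally, passing from the fixed-$W$ estimates in $\volm$ to $\W$-probability statements for $\volm$-a.e.\ $u$ is not a bare Fubini: the iterated $\limsup$s have to be replaced by $\liminf$s.
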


\begin{remark}
The randomness of the velocity field is \textit{not essential to the statement of the theorem}: since all statements are almost sure in $u$, we in particular could fix a single deterministic velocity field $U \in L^\infty_t C^{-\rho}_x \cap L^1_t C^0_x$ such that the desired nonuniqueness holds. We include the randomness in the theorem statement to emphasize that the randomness of the velocity field is \textit{essential to the construction and proof}; see Section~\ref{sec:heuristics} for further discussion.
\end{remark}

\begin{remark}
    In the above result, we sometimes refer to~\eqref{eq:main} as an ODE and sometimes as an SDE. Throughout, when we call~\eqref{eq:main} an SDE we intend to emphasize that we are referring to stochastic solution theory of the equation---defined in Section~\ref{subsec:uniqueness_def}---which has certain additional informational/measurable aspects in addition to being a pointwise Carath\'eodory solution. We note however in these statements that we are taking $u$ to be fixed in a full measure set and are not further using the measurability structure in $u$. When we mean to disregard the additional informational structure in the driving noise---and in particular when $W$ is being taken to be deterministic---we refer to~\eqref{eq:main} as an ODE.
\end{remark}

\begin{remark}
    All velocity fields considered in this work will be \textit{divergence-free}: $\nabla \cdot u =0$. Thus in particular, the fields $u$ used in Theorem~\ref{thm:main-brownian} are divergence-free. In~\cite{grafner_weak_2024,hao_sdes_2025,kinzebulatov_sdes_2025}, taking $u$ divergence-free has allowed weak well-posedness results to be proved in a broader class of spaces. However, Theorem~\ref{thm:main-brownian} shows that the pathwise/path-by-path well-posedness theory in H\"older spaces is unaffected by whether $u$ is divergence-free. Some further discussion is given in Section~\ref{sec:contributions}.
\end{remark}

\begin{remark}
    It is essential that we are working in $d \geq 2$ for this result to hold.~\cite[Theorem 2.12]{butkovsky_weak_2025} proves pathwise uniqueness in $d=1$ for certain negative regularity velocity fields, in particular covering the regularities of the velocity fields in Theorem~\ref{thm:main-brownian}. 
\end{remark}

Beyond Brownian motion, the now substantial regularization by noise literature has proved uniqueness for~\eqref{eq:main} under a large variety of driving noises and regularity hypotheses. We will focus on fractional Brownian motion (fBm) driving noises in this work, but we note there is parallel literature which considers $\alpha$-stable processes~\cite{priola_pathwise_2012,priola_davies_2018,athreya_strong_2020,kremp_rough_2025}. Fractional Brownian motions are a family of Gaussian processes generalizing Brownian motion, indexed by the \textit{Hurst parameter} $H \in (0,1)$ (see Definition~\ref{def:fbm}). For our purposes, the primary facts of interest are that fBm with $H=1/2$ is just usual Brownian motion and that if $W$ is an fBm with Hurst parameter $H \in (0,1)$, then $W \in C^{H-}_t$ almost surely. That is, $W$ is in every $C^\beta$ space with $\beta < H$, analogously to Brownian motion with $H=1/2$. 

The path-by-path regularization by fBm is proved in~\cite{catellier_averaging_2016,galeati_noiseless_2021,galeati_solution_2025}. In particular, if $u \in L^\infty_t C^\alpha_x$ and $W$ is an fBm with Hurst parameter $H \in (0,1),$ then~\eqref{eq:main} has path-by-path (and hence pathwise) uniqueness whenever
\begin{equation}
\label{eq:hurst parameter condition}
\alpha > 1 - \frac{1}{2H}.
\end{equation}
Note that this includes $\alpha <0$; however in that case the more classical Carath\'eodory solution theory isn't available, and one has to be more careful in defining and proving existence of solutions. For $\alpha \geq 0$, path-by-path uniqueness is equivalent to a.s.\ uniqueness of Carath\'eodory solutions. 

Our next result goes in the direction of showing the sharpness of this threshold for all $H \in (0,1).$ For $H < 1/2$, we are not able to build velocity fields that also live in $L^1_tC^0_x$, thus precluding the Carath\'eodory solution theory. As such, we will need to modify the statement somewhat to avoid working with nonclassical solutions. We will first need the following definition.

\begin{definition}\label{def:sol}
    Suppose that $u \in C^\infty_{\mathrm{loc}}((0,T]\times\R^d)$ and $W \in C^0((0,T],\R^d)$ are fixed. Then for any $s>0, y \in \R^d,$ we denote the unique classical solution to the ODE
    \begin{equation*}
    \begin{cases}
    dX_t = u(t,X_t)\,dt + dW_t,
    \\X_s=y,
    \end{cases}
    \end{equation*}
    by $ t \mapsto X^{s,y, u, W}_t$, $t\ge s$.
\end{definition}

\begin{theorem}
    \label{thm:main-general}
    We have the following explosive separation estimates.
\begin{enumerate}
    \item    
    \label{item:general explosion}
    For all $\alpha \in \R$ with $\alpha <1/2$, there exists a random velocity field $v^\alpha: [0,1] \times \R^2 \to \R^2$ with law $\P$ such that we have the sure bound that $\|v^\alpha(t,\cdot)\|_{C^{\alpha}(\R^2)} \leq 2$ for all $t \in [0,1]$ and $v^\alpha \in C^\infty_{\mathrm{loc}}((0,1] \times \R^2)$. The velocity field $v^\alpha$ has the property that there exists a sequence of times $T^n$ with $\lim_{n \to \infty} T^n =0$ such that for any $\frac{1}{2(1-\alpha)} < \beta \leq 1$ and deterministic $W\in C^\beta([0,1],\R^2)$,
    \begin{equation*}
    \lim_{n \to \infty} \limsup_{\delta \to 0} \limsup_{|x-y| \to 0} \limsup_{m \to \infty} \P(|X^{T^m,x,v^\alpha, W}_{T^n} - X^{T^m,y,v^\alpha, W}_{T^n}| < \delta) =0. 
    \end{equation*}
    \item
    \label{item:brownian explosion}
    For all $\rho\in (0,1/2)$, there exists a random velocity field $u^\rho: [0,1]  \times \R^2 \to \R^2$ with law $\P$ such that we have the sure bound that $\|u^\rho(t,\cdot)\|_{C^{-\rho}(\R^2)} \leq 2$ for all $t\in [0,1]$ and $u^\rho \in L^1([0,1], C^0(\R^2)) \cap C^\infty_{\mathrm{loc}}((0,1] \times \R^2).$ The velocity field $u^\rho$ has the property that there exists a sequence of times $T^n$ with $\lim_{n \to \infty} T^n =0$ such that for any deterministic $W\in C^{\frac{1+\rho/8}{2+\rho/2}}([0,1],\R^2)$,
       \begin{equation*}
    \lim_{n \to \infty} \limsup_{\delta \to 0} \limsup_{|x-y| \to 0} \limsup_{m \to \infty} \P(|X^{T^m,x,u^\rho, W}_{T^n} - X^{T^m,y,u^\rho, W}_{T^n}| < \delta) =0. 
    \end{equation*}
\end{enumerate}
\end{theorem}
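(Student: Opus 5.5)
We construct $v^\alpha$ (and $u^\rho$) as a superposition of independent random shear layers acting on disjoint dyadic time intervals accumulating at $0$. Fix scales $\ell_n \downarrow 0$ and time intervals $I_n=[T^{n+1},T^n)$ of length $\tau_n$. On $I_n$ the field is a sum of consecutive sub-blocks of duration $\theta_n$. On each sub-block the field is a divergence-free shear $A_n\,\sigma\, e\,\phi(x\cdot e^\perp/\ell_n)$. Here $\phi$ is a smooth $1$-periodic profile, the direction $e$ alternates between $e_1$ and $e_2$, and the signs $\sigma=\pm1$ together with a random phase shift are i.i.d. Smoothing in time at the block boundaries makes the field $C^\infty$ on $(0,1]$. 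The $C^\alpha$ norm of such a shear is $\sim A_n\ell_n^{-\alpha}$, so we take $A_n=\ell_n^{\alpha}$; for $\alpha=-\rho$ this gives $\|u(t)\|_{C^{-\rho}}\le 2$. For $u^\rho$ we additionally need $\sum_n \tau_n A_n<\infty$ to land in $L^1_tC^0_x$.

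The mechanism is that two trajectories at distance $r\ll\ell_n$ evolve for one sub-block roughly by $r\mapsto r(1+\sigma\, A_n\theta_n\ell_n^{-1}\phi'(\cdot))$. The random phase/sign makes the log-growth a sum of i.i.d.-like increments with mean $\gtrsim$ (variance) $>0$, by the Furstenberg-type positivity for alternating shears. Alternatively, the mean is zero and the CLT spreading dominates; this is where Berry--Esseen enters. Choose $A_n\theta_n\ell_n^{-1}\sim 1$, so each sub-block is an $O(1)$ random stretch. The deterministic noise $W$ moves the particle across a sub-block by $|W|_{C^\beta}\theta_n^\beta$, and the drift moves it by $A_n\theta_n\sim \ell_n$.

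The condition $\beta>\frac{1}{2(1-\alpha)}$ is exactly what is needed. The number of sub-blocks is $N_n=\tau_n/\theta_n$, and the CLT fluctuation of the total displacement due to the random signs is $\sqrt{N_n}A_n\theta_n$. This must beat the deterministic contribution $|W|_{C^\beta}\tau_n^\beta$, and it must be large enough that pairs separated at scale $\ell_n$ decorrelate. Since $\theta_n=\ell_n^{1-\alpha}$, we need $\ell_n^{1-\alpha}\ll\tau_n^{\beta}\cdots$. Matching exponents gives $\tau_n\sim\ell_n^{2(1-\alpha)}$, with the critical exponent $1/(2(1-\alpha))$.

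The proof proceeds in three steps, after first choosing these parameters with polynomial gaps.

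1. \textbf{Separation to scale $\ell_n$.} Conditional on $W$, two points $x,y$ with $|x-y|\to0$, started at $T^m$ with $m\gg n$, separate to distance $\gtrsim\ell_k$ during each later layer $I_k$, $k\ge n$. Multiplicative growth through the $N_k$ sub-blocks compounds to an exponential factor $e^{cN_k}$ with high probability. We prove this via a supermartingale for $\log|X-Y|$ (or a moment $|X-Y|^{-p}$) with respect to the filtration of the independent block randomness, using that the $W$-displacement per sub-block is $\ll\ell_k$.

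2. \textbf{Non-concentration at scale $\ell_n$.} Once separated by $\gtrsim\ell_n$, the two particles see nearly independent shear phases. In layer $I_n$, each coordinate of $X_{T^n}$ is a sum of $\sim N_n$ conditionally independent bounded increments of size $A_n\theta_n$, plus the deterministic $W$ part. The difference $X-Y$ is likewise such a sum. Berry--Esseen gives
\[
\P(|X_{T^n}-Y_{T^n}|<\delta)\lesssim \frac{\delta}{\sqrt{N_n}A_n\theta_n}+N_n^{-1/2}.
\]
Sending $\delta\to0$ and then $n\to\infty$ yields the limit in the statement.

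3. \textbf{Brownian case.} The same scheme works for part 2, with $\alpha=-\rho$ and the specific $\beta=\frac{1+\rho/8}{2+\rho/2}>\frac1{2(1+\rho)}$. The parameters are tuned so that $\sum\tau_nA_n<\infty$ as well.

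The main obstacle is step 2's independence claim. The increments are only conditionally independent given the past, and the two trajectories interact through the same field. To handle this, I would first try a martingale Berry--Esseen argument (or a conditional characteristic-function bound sub-block by sub-block). This uses that, given the $\mathcal F$-past, the next sign is fresh and its effect on $X-Y$ has conditional variance bounded below once $|X-Y|\gtrsim\ell_n$. Ensuring this lower bound persists, so that separation is not lost within the layer, requires reusing the step 1 estimate inside each layer.
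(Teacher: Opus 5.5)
Your overall architecture matches the paper's: independent random-phase shears on time layers accumulating at $0$, combined with Berry--Esseen. However, there is a genuine gap in your choice of scales, and the rest of the argument depends on it.

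\textbf{The scaling.} Taking $A_n\theta_n\ell_n^{-1}\sim1$ forces $\theta_n=\ell_n^{1-\alpha}$. During one sub-block, $W$ then moves the particles by $\|W\|_{C^\beta}\theta_n^\beta\sim\ell_n^{\beta(1-\alpha)}$, which is $\gg\ell_n$ whenever $\beta<\frac{1}{1-\alpha}$.
\begin{itemize}
\item For $\frac{1}{2(1-\alpha)}<\beta<\frac{1}{1-\alpha}$, the hypothesis ``the $W$-displacement per sub-block is $\ll\ell_k$'' used in your Step~1 is false. This includes every Brownian case of Item~2, where $\alpha=-\rho>-1$.
\item In that range, $W$ can drag both particles across many periods of the shear within a single sub-block, so the kick is largely averaged out. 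Neither the $O(1)$ random stretch nor a uniform variance lower bound survives.
\item $O(1)$ kicks per sub-block only reach the one-shot regime $\alpha<1-\frac1\beta$ of Section~\ref{sec:heuristic-comp}, i.e.\ $\alpha<-1$ for Brownian motion.
\end{itemize}
Your exponent matching is also inconsistent. First, $\tau_n\sim\ell_n^{2(1-\alpha)}$ together with $\theta_n=\ell_n^{1-\alpha}$ gives $N_n=\tau_n/\theta_n<1$. Second, the CLT fluctuation does not need to beat $\|W\|_{C^\beta}\tau_n^\beta$: $W$ cancels exactly in $X-Y$ and matters only through where the particles sample the field.

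The workable choice is the reverse:
\begin{itemize}
\item Take $\theta_n\approx\ell_n^{2(1-\alpha)}$ (up to a factor $2^{2\sqrt n}$). Then $\theta_n^\beta\ll\ell_n$ precisely because $\beta>\frac{1}{2(1-\alpha)}$.
\item Each sub-block then gives only a tiny additive kick $A_n\theta_n\approx\ell_n^{2-\alpha}\ll\ell_n$, not an $O(1)$ stretch.
\item With $N_n\approx\ell_n^{-2(1-\alpha)}2^{-3\sqrt n}$ sub-blocks, $\sqrt{N_n}A_n\theta_n\approx2^{\sqrt n/2}\ell_n$.
\item The layer length $\tau_n=N_n\theta_n\approx2^{-\sqrt n}$ is summable but is not a power of $\ell_n$.
\end{itemize}
For $u^\rho$, summability of $\tau_nA_n$ additionally forces $\tau_n$ to be polynomially small. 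This costs a power of $\ell_n$ but still leaves room below $\beta=\frac12$.

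\textbf{The independence obstacle.} The dependence problem you flag as the main obstacle, and leave open, is created by alternating the shear direction from sub-block to sub-block. Keep one direction for a whole layer and alternate only between layers. Since both particles share $W$ and the shear has no component along $e^\perp$, their perpendicular separation is then exactly conserved through the layer. The parallel increments $D_j$ are then:
\begin{itemize}
\item genuinely independent, since each depends only on that sub-block's random frequency and phase and on the deterministic $W$;
\item mean zero, by the uniform phase;
\item of variance $\E D_j^2\ge\frac12$ in rescaled units, once the perpendicular separation is $\ge4$ and the perpendicular component of $W$ oscillates by at most $\frac1{16}$ per sub-block;
\item of third moment $\E|D_j|^3\le8$.
\end{itemize}
The classical Berry--Esseen theorem then applies directly (Proposition~\ref{prop:one scale}). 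No martingale CLT is needed, and there is no need to track separation inside a layer.

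\textbf{Step 1 and the iteration.} Step~1 is aimed at a problem that does not arise. Since $m\to\infty$ is the innermost limit with $x\ne y$ fixed, at time $T^m$ the points are already $\ge4\ell_m$ apart in the relevant coordinate. At most one layer has to be skipped for this, because a shear parallel to the separation leaves it unchanged. So no growth from infinitesimal separation and no Lyapunov/Furstenberg argument is needed.

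What is needed instead is to run your Step~2 in every layer:
\begin{itemize}
\item Perpendicular separation $\ge4\ell_k$ entering layer $k$ yields separation $\ge4\ell_{k-1}$ in the sheared coordinate on exit, except on an event of probability $\le64\cdot2^{-\sqrt k/2}$ (Corollary~\ref{cor:rescaled for general}).
\item Condition layer by layer, using independence of the layers, and sum these summable failure probabilities over $k\ge n$.
\item This gives $\P(|X_{T^n}-Y_{T^n}|\le4\ell_n)\to0$ as $n\to\infty$, uniformly in $m$ and $x\ne y$, which is the theorem.
\end{itemize}
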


The vector fields in the above result are defined explicitly in Section~\ref{sec:definition of fields}. The condition on $\beta$ is equivalent to $\beta \in (0,1]$ and $\alpha < 1 - \frac{1}{2\beta},$ which is complementary to~\eqref{eq:hurst parameter condition}. The above result needs some interpretation. Since the velocity field is smooth away from the singular initial time, we have unique solutions to~\eqref{eq:main} when started at a positive time. Instead of proving a nonuniqueness result for solutions started at the initial time---which would require developing a somewhat delicate solution theory for $\alpha <0$---we prove an \textit{explosive separation estimate} for particles started arbitrarily close together at times very close to the initial time. Theorem~\ref{thm:main-general} can be read as saying no matter how close you start two particles together, if you flow them under~\eqref{eq:main} \textit{with the same driving noise} on the time interval $[\ep,1]$, then as you send $\ep\to 0$ they will a.s.\ instantaneously macroscopically separate.

It is exactly Theorem~\ref{thm:main-general}, Item~\ref{item:brownian explosion} that is the instability estimate used to prove the nonuniqueness in Theorem~\ref{thm:main-brownian}. One can view the estimates of Theorem~\ref{thm:main-general} as precluding \textit{any} kind of pathwise/path-by-path well-posedness theory, as it says that the flow is infinitely unstable to arbitrarily small perturbations at arbitrarily small times. For $\alpha <0$, we will refrain from further elaborating the result into a specific form of nonuniqueness at the initial time, due to the difficulties with defining solutions. For $\alpha \in [0,1/2)$, we will get the further nonuniqueness results of Corollary~\ref{cor:caratheodory general} and Corollary~\ref{cor:pathwise failure fBm}---analogous to those of Theorem~\ref{thm:main-brownian}---by combining the explosive separation estimates of Theorem~\ref{thm:main-general}, Item~\ref{item:general explosion} with the qualitative nonuniqueness theory given by Theorem~\ref{thm:general_non_path}.

The reason we need Item~\ref{item:general explosion} and Item~\ref{item:brownian explosion} in Theorem~\ref{thm:main-general} is that for Theorem~\ref{thm:main-brownian}, Corollary~\ref{cor:caratheodory general}, and Corollary~\ref{cor:pathwise failure fBm}, we will want to work with velocity fields that live in $L^1_t C^0_x$. For Corollary~\ref{cor:caratheodory general} and Corollary~\ref{cor:pathwise failure fBm}, this comes for free from $\alpha\geq 0$ and $v^\alpha \in L^\infty_t C^\alpha_x$. For Theorem~\ref{thm:main-brownian}, we will have to work with velocity fields $u^\rho \in L^\infty_t C^{-\rho}_x$, so the $L^1_t C^0_x$ bound is not ``free''. As such, we will need to modify the fields to additionally satisfy this bound---as is further discussed in Remark~\ref{rem:regularity tech}.

The other important aspect of Theorem~\ref{thm:main-general} is that we make no assumptions on $W$ other than the H\"older regularity. We are not using any special structure of the driving noise to deduce this separation, we only use that it does not fluctuate \textit{too much} on small time scales. This aspect demonstrates that the proof does not rely at all on the specific stochastic or geometric structure of the driving noise; for each $\alpha,$ we construct a single (random) velocity field in $C^\alpha_x$ that causes explosive separation for any driving noise in $C^\beta$ with $\beta > \frac{1}{2(1-\alpha)}.$  

We now note the nonuniqueness corollaries of Theorem~\ref{thm:main-general} and Theorem~\ref{thm:general_non_path}, which are available for $\alpha \geq 0$. First we give a statement about Carath\'eodory nonuniqueness for arbitrary deterministic $W$, only assuming they live in certain H\"older spaces.

\begin{corollary}
\label{cor:caratheodory general}
    For all $\alpha \in [0,1/2)$, there exists a random velocity field $u: [0,1]  \times \R^2 \to \R^2$ such that we have the sure bound that $\|u(t,\cdot)\|_{C^{\alpha}(\R^2)} \leq 2$ for all $t\in[0,1]$. The velocity field $u$ has the property that for any deterministic $W \in C^\beta([0,1],\R^2)$ with $\beta > \frac{1}{2(1-\alpha)}$ and $y\in\R^2$, the ODE~\eqref{eq:main}---almost surely in $u$---has nonunique Carath\'eodory solutions on $[0,\ep]$ for every $\ep>0$. 
\end{corollary}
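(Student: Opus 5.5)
The plan is to take $u := v^\alpha$, the field from Theorem~\ref{thm:main-general}, Item~\ref{item:general explosion}, and to combine its explosive separation estimate with a qualitative nonuniqueness criterion. That criterion is the role of Theorem~\ref{thm:general_non_path}, which the paper invokes for exactly this purpose. Since $\alpha\ge 0$ and $\|v^\alpha(t,\cdot)\|_{C^\alpha}\le 2$, we get $v^\alpha\in L^\infty_tC^0_x\subset L^1_tC^0_x$, so Carath\'eodory solutions exist and the classical flow of Definition~\ref{def:sol} is defined from any positive starting time. Fix a deterministic $W\in C^\beta$ with $\beta>\frac{1}{2(1-\alpha)}$, which may be taken $\le 1$ after lowering $\beta$, and fix $y\in\R^2$.

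First, I would build candidate solutions by compactness. Pick $x,y'$ close to $y$ and large $m$. The solutions $t\mapsto X^{T^m,x,v^\alpha,W}_t$, extended by $x+W_t-W_0$ on $[0,T^m]$ up to a small correction, are equicontinuous because the drift is bounded by $2$ and $W$ is continuous. Any subsequential limit as $m\to\infty$, $x\to y$ is then a Carath\'eodory solution from $y$, by continuity of $v^\alpha$ in $x$ and dominated convergence in the integral. Second, I would extract two distinct limits. The estimate of Item~\ref{item:general explosion} says that for $n$ large there is $\delta_n>0$ such that, for $x,y'$ close enough to $y$ and $m$ large, $\P(|X^{T^m,x}_{T^n}-X^{T^m,y'}_{T^n}|\ge\delta_n)\ge c>0$ uniformly. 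Fatou's lemma, applied along sequences, then gives with probability at least $c$ in $u$ two limiting Carath\'eodory solutions from $y$ whose values at $T^n$ differ by at least $\delta_n$.

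Third, I would upgrade this to almost sure nonuniqueness on every $[0,\ep]$. The event ``nonunique Carath\'eodory solutions on $[0,\ep]$ for all $\ep$'' equals ``for every $n$, two solutions differ before $T^n$''. Since $T^n\to0$, this is a tail-type event with respect to the independent layers in the construction of $v^\alpha$ at smaller and smaller time scales (Section~\ref{sec:definition of fields}). A $0$--$1$ law, or the qualitative criterion of Theorem~\ref{thm:general_non_path} applied directly, should therefore upgrade positive probability at each scale to probability one. Measurability of this event in $u$ can be arranged by restricting to rational times and a countable dense set of approximating starting points.

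The main obstacle is this final step, together with the subtle handling of the quantifiers. The probability in Theorem~\ref{thm:main-general} is over $u$, whereas the corollary must hold for each fixed $W$ and $y$ almost surely in $u$. The $0$--$1$ argument also needs the event to be genuinely independent of the field on any fixed time window $[T^n,1]$. Here I would lean on Theorem~\ref{thm:general_non_path}, which presumably converts exactly this kind of uniform-in-scale separation into almost sure nonuniqueness. Failing that, I would use a Borel--Cantelli argument over the independent scales $n$, showing that separation occurs at infinitely many scales almost surely.
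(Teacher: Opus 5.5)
Your proposal is correct. Its stated plan is exactly the paper's one-line proof: take $u=v^\alpha$ and combine Theorem~\ref{thm:main-general}, Item~\ref{item:general explosion} with Theorem~\ref{thm:general_non_path}. The hypotheses check out as follows.
\begin{itemize}
\item For $\alpha\ge 0$ one has $v^\alpha\in L^\infty_tC^\alpha_x\subset L^1_tC^0_x$.
\item A deterministic $W$ defines the regular driving noise $\W=\delta_W$ (Remark~\ref{rem:regular_driving_noise}).
\item Explosive separation holds for this $W$ by Theorem~\ref{thm:main-general}.
\end{itemize}
With $\W=\delta_W$, a weak solution is just a probability measure on Carath\'eodory solutions, so Item~\ref{item:as non determin} of Theorem~\ref{thm:general_non_path} gives the corollary directly.

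Your Steps~1--2 are the specializations of Lemma~\ref{lemma:compactness} and Lemma~\ref{lem:fubini} to $\W=\delta_W$, and that hands-on route is fine. However, you made your own Step~3 harder by weakening the estimate to a uniform lower bound $c>0$. Explosive separation actually says the separation probability tends to one. Keeping that, reverse Fatou along your sequences $(x_\ell,m_\ell)$ gives, for each large $n$, an event $E_n$ on which two Carath\'eodory solutions from $y$ differ at time $T^n$, with $\volm(E_n)\ge 1-\eta_n$ and $\eta_n\to 0$. Then $\volm(\limsup_n E_n)\ge\limsup_n\volm(E_n)=1$, and on $\limsup_n E_n$ you have nonuniqueness on every $[0,\ep]$. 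This is precisely what Lemma~\ref{lem:fubini} does, so the ``main obstacle'' you flag is self-inflicted.

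Your $0$--$1$ repair is nonetheless valid. Nonuniqueness on $[0,\ep']$ implies nonuniqueness on $[0,\ep]$ for $\ep'<\ep$. Hence the target event equals $\bigcap_{N\ge N_0}\{\text{nonunique on }[0,T^N]\}$ for every $N_0$, and this depends only on the independent layers $n>N_0$. It is therefore a tail event of probability at least $c$, hence of probability one.

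The Borel--Cantelli fallback would not work as stated. The scale-$n$ separation events all depend on the field near $t=0$, since each involves the layers $n'>n$, so they are not independent.

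Finally, the quantifier issue you raise is not real. Theorem~\ref{thm:main-general} already controls probabilities in $u$ for a \emph{fixed deterministic} $W$, which is exactly the corollary's setting. The Fubini-type exchange in Lemma~\ref{lem:fubini} only matters for genuinely random driving noise.
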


Corollary~\ref{cor:caratheodory general} immediately shows that path-by-path uniqueness does not hold when the driving noise is fBm with Hurst parameter $H \in (0,1)$ and $\alpha < 1 - \frac{1}{2H}$, thus proving sharpness in H\"older spaces (modulo the critical endpoint) of the path-by-path uniqueness theory. However, the failure of pathwise uniqueness is somewhat stronger. This failure, which requires using (in a fairly mild way) the informational structure of fBm, is given by the next result.

\begin{corollary}
    \label{cor:pathwise failure fBm}
    For all $\alpha \in [0,1/2)$, there exists a random velocity field $u: [0,1]  \times \R^2 \to \R^2$ such that we have the sure bound that $\|u(t,\cdot)\|_{C^{\alpha}(\R^2)} \leq 2$ for all $t\in[0,1]$.  Then, for $W$ an independent fractional Brownian motion on $\R^2$ with Hurst parameter $H\in(1/2,1)$ such that  $\alpha < 1 - \frac{1}{2H},$ the velocity field $u$ has the following properties.
    \begin{enumerate}
        \item  For any deterministic initial data $y \in \R^2$, for almost every $u$, the SDE~\eqref{eq:main} exhibits pathwise nonuniqueness on $[0,\ep]$ for every $\ep>0$. 
        \item This pathwise nonuniqueness is ``almost sure in $W$''. There exists a weak solution $(X,W)$ to the SDE~\eqref{eq:main} such that for every $\ep>0$, the conditional law of $X|_{[0,\ep]}$ given $W$ is, almost surely in $W$, supported on more than one $X|_{[0,\ep]}$ path. In particular, for almost every $(u,W)$, the ODE~\eqref{eq:main} has more than one Carath\'eodory solution on every time interval $[0,\ep]$ with $\ep>0$.
    \end{enumerate}
\end{corollary}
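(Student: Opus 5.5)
The plan is to take $u = v^\alpha$ from Theorem~\ref{thm:main-general}, Item~\ref{item:general explosion}. Since $\alpha\ge 0$ and $\|v^\alpha(t,\cdot)\|_{C^\alpha}\le 2$, we automatically have $u\in L^\infty_tC^0_x\subset L^1_tC^0_x$, so the Carath\'eodory framework applies. The proof then combines the explosive separation estimate with the qualitative nonuniqueness theory of Theorem~\ref{thm:general_non_path}, which I take as the bridge from ``infinitely unstable flow near $t=0$'' to ``nonuniqueness at $t=0$''. The key probabilistic input is that fBm with $H\in(1/2,1)$ lies a.s.\ in $C^\beta$ for every $\beta<H$. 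Since $\alpha<1-\frac{1}{2H}$ is equivalent to $H>\frac{1}{2(1-\alpha)}$, we can pick $\beta\in\bigl(\frac{1}{2(1-\alpha)},H\bigr)$ with $\beta\le 1$. Then almost every realization of $W$ is an admissible deterministic driving path in Item~\ref{item:general explosion}.

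\textbf{Step 1 (path-by-path nonuniqueness).} By independence of $u$ and $W$ and Fubini, I would apply Corollary~\ref{cor:caratheodory general} for each fixed $W$ in the full-measure set $W\in C^\beta$. This shows that for a.e.\ $(u,W)$ the ODE has more than one Carath\'eodory solution on each $[0,\ep]$, which is the final sentence of Item 2. This step is routine.

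\textbf{Step 2 (weak solution with nondegenerate conditional law).} This is the step requiring the informational structure of $W$. I would build $X$ as a limit of the classical flows $X^{T^m,y_m,u,W}$ started at times $T^m\to0$ with randomized initial points $y_m$ near $y$. For instance, take $y_m$ uniform in a ball of radius $r_m\to0$, independent of $W$, and extend $X$ on $[0,T^m]$ by $y+W_t-W_0$. Tightness follows from $u\in L^\infty$ and the H\"older continuity of $W$. Any subsequential limit $(X,W)$ is then a Carath\'eodory solution, by uniform convergence and continuity of $u$ in $x$. Adaptedness should be checked via Theorem~\ref{thm:general_non_path}, or directly: the extra randomness $y_m$ is independent of the whole path $W$, so $X_t$ is measurable with respect to $\sigma(y_m, W_s : s\le t)$. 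The fBm filtration generated by $W$ is preserved under enlarging by an independent variable, so $(X,W)$ is a weak solution in the fBm sense used in Section~\ref{subsec:uniqueness_def}.

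\textbf{Step 3 (the conditional law is not a Dirac mass).} Fix $n$ and condition on $W$. Two independent draws $y,y'$ from the ball of radius $r_m$ satisfy $|y-y'|\to0$. By Item~\ref{item:general explosion}, applied for fixed $W$ with probability over $u$, the probability that $|X^{T^m,y}_{T^n}-X^{T^m,y'}_{T^n}|<\delta$ is small for large $n$, uniformly in small $\delta$. Passing to the limit shows that the conditional law of $X_{T^n}$ given $(u,W)$ is not concentrated at a point, with probability close to one. Letting $n\to\infty$, so that $T^n\to0$ and every $[0,\ep]$ contains some $T^n$, and using monotonicity of the events in $\ep$, the conditional law of $X|_{[0,\ep]}$ is supported on more than one path for every $\ep$, a.s. Item 1 follows from Item 2: sample two conditionally independent copies $X^1,X^2$ given $(u,W)$. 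Both $(X^i,W)$ are weak solutions, and $X^1\ne X^2$ with positive probability, which is exactly pathwise nonuniqueness.

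\textbf{Main obstacle.} The main obstacle is the passage from the ``$\limsup$ in probability over $u$'' statement to an almost-sure-in-$u$, almost-sure-in-$W$ nondegeneracy of the conditional law. It also requires checking that the subsequential limit retains the non-concentration; weak limits could in principle merge the mass. I would handle this with a Borel--Cantelli argument along a sparse sequence $n_k$, using lower semicontinuity of $\mathbb P(|Z-Z'|\ge\delta)$ under weak convergence with $\delta$ fixed. I expect Theorem~\ref{thm:general_non_path} to package exactly this.
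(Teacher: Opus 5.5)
Your overall route is the paper's. Take $u=v^\alpha$, which lies in $L^\infty_tC^0_x\subset L^1_tC^0_x$ since $\alpha\ge0$. Pick $\beta\in(\frac{1}{2(1-\alpha)},H)$, so that $\W$-a.e.\ fBm path is admissible in Theorem~\ref{thm:main-general}, Item~\ref{item:general explosion}. Then apply Items~\ref{item:high prob pathwise}--\ref{item:as non determin} of Theorem~\ref{thm:general_non_path}; the paper's proof of this corollary is exactly that combination.

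The one hypothesis of Theorem~\ref{thm:general_non_path} you never check is that fBm is a regular driving noise (Remark~\ref{rem:regular_driving_noise}). This is exactly where your ``direct'' adaptedness argument in Step~2 falls short. Independence of $y_m$ from $W$ only shows that the approximants $(X^m,W)$ are weak solutions. The limit $X$ is no longer a function of $(y_m,W)$, and for fBm the non-anticipation condition is a conditional independence given $\pi^tW$ that must survive the weak limit. The paper handles this in Lemma~\ref{lemma:compactness}, using continuity of $\pi^tW\mapsto\W(d\tilde W\cond\pi^tW)$. Alternatively, since all approximants share the $W$-marginal $\W$, you can approximate $\E[g(W)\cond\pi^tW]$ in $L^1(\W)$ by bounded continuous functions.

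If Steps~2--3 are meant to stand on their own rather than defer to Theorem~\ref{thm:general_non_path}, two further points need care.

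\emph{Order of limits.} Explosive separation sends $m\to\infty$ before $|x-y|\to0$, so shrinking $r_m$ together with $m$ is not covered by it. You could instead use the uniform bound of Proposition~\ref{prop:separation_estimate}, which only needs a coordinate gap of order $2^{-m}$, so $r_m2^m\to\infty$ suffices. Or you could choose the pairs $(x_\ell,m_\ell)$ diagonally, as the paper does after Lemma~\ref{lem:fubini}.

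\emph{Annealed versus quenched.} Item~\ref{item:general explosion} controls $\volm$-probabilities for fixed $W$, but your solution is built for fixed $u$ along a $u$-dependent subsequence. Lemma~\ref{lem:fubini} yields only a quenched $\liminf$ at each scale. A single subsequence that is good at every $T^{n}$ therefore needs the Borel--Cantelli/union-bound argument you allude to, and it has to be carried out. The paper sidesteps this. For each $k$ it builds a two-point coupling with $\coupm^k(X^1|_{[0,\tau^k]}=X^2|_{[0,\tau^k]})\le 1/k$, then takes the mixture $\sum_k2^{-k}\solm^k$, which is $\W$-a.s.\ nondegenerate on every $[0,\ep]$.

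Your deduction of Item~1 from Item~2, via two conditionally independent copies given $W$, is correct; it is the argument behind Lemma~\ref{lem:Yamada-Watanabe}. It is a mild shortcut compared with the paper, which obtains Item~1 directly from the two-point limit.
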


\subsection{Weak, pathwise, and path-by-path}\label{subsec:uniqueness_def}

Before continuing our discussion, let us give clear definitions of the various solution and uniqueness notions present for stochastic differential equations. We restrict our attention to the case of additive noise as it allows us to avoid stochastic integration theory, which adds an additional layer of complication. For our purposes, we will always suppose $u \in L^1_t C^0_x, W \in C^0_t$, allowing for the simple Carath\'eodory/integral equation interpretation of~\eqref{eq:main}. Throughout this section, we thus take $u$ fixed. 

In our definition we also allow for general driving noises, though a particularly relevant class will be (f)Bm. Let us define the law of these processes now.

\begin{definition}
\label{def:fbm}
    We say that $\W(dW) \in \mathcal{P}(C^0([0,T], \R^d))$ is the path measure for $d$-dimensional fractional Brownian motion (fBm) with Hurst parameter $H \in (0,1)$, if $W$ is a centered Gaussian process under $\W$ and for all $s, t\geq 0$,
    \[\int_{C^0([0,T], \R^d)} W_{t,i} W_{s,j} \W(dW) = \frac{1}{2} \delta_{ij} \big(t^{2H} + s^{2H} - |t-s|^{2H}).\]
    In particular $H=1/2$ corresponds to Brownian motion.
\end{definition}

\begin{remark}
    Although our discussion only covers the case of deterministic initial data and additive noise, it can easily be adapted, \textit{mutatis mutandis}, to independent random initial data $y=Y$ and, at least in the case of fBm driving noise, multiplicative noise $\sigma(t,X_t)\,dW_t$. The presence of random initial data that is independent of the driving noise is easily accommodated via conditioning. Multiplicative noise only adds the additional complication of defining what we mean for the pair $(X,W)$ to almost surely solve~\eqref{eq:main}. This requires a stochastic or rough solution theory. This solution theory however is well understood for (f)Bm driving noise---through It\^o integration, rough differential equations, stochastic sewing, etc.---and the complexity is orthogonal to the notion of weak solutions, strong solutions, and associated ideas of uniqueness. 
\end{remark}

We now define a weak solution to~\eqref{eq:main}.
\begin{definition}
\label{def:weak}
    Let $u \in L^1([0,T],C^0(\R^d))$, $y \in \R^d$, and $\W\in \mathcal{P}(C^0([0,T],\R^d))$. A \textit{weak solution} to the SDE~\eqref{eq:main} with driving noise $\W$ is a probability measure $\solm(dX,dW) \in \mathcal{P}(C^0([0,T],\R^d) \times C^0([0,T],\R^d))$ such that
    \begin{enumerate}
        \item \label{item:marginalizes correct} $\W(dW) = \solm(C^0([0,T],\R^d), dW)$.
        \item \label{item:non-anticipatory} With respect to the probability measure $\solm$, for every $t \in[0,T]$, $X|_{[0,t]}$ and $W$ are conditionally independent given $W|_{[0,t]}.$
        \item \label{item:solves equation} We have $\solm$-almost surely that for all $t \in [0,T]$, 
        \[X_t = y + \int_0^t u(s,X_s)\,ds + W_t -W_0.\]
    \end{enumerate}
\end{definition}

Items~\ref{item:marginalizes correct} and~\ref{item:solves equation} are rather transparent: the first asks that the driving noise $W$ has the distribution we want while the second asks that a.s.\ the SDE is being solved pathwise. These are clearly necessary conditions for any reasonable solution theory. Item~\ref{item:non-anticipatory} is the heart of the definition as it encodes a non-trivial informational condition: it demands that $X|_{[0,t]}$ does not know about the future of $W$ for $s \geq t$. In the Brownian case, since $(W_s - W_t)_{s \geq t}$ is simply independent of $W|_{[0,t]}$, this property can be written as an independence condition: $X|_{[0,t]}$ and $W|_{[t,T]} - W_t$ are independent. However, in the case of more general noise---such as for fBm---there isn't such a straightforward decomposition, making the conditional independence statement necessary.

We now give the definition of a strong solution, which obeys a stronger informational (or, equivalently, measurability) condition than a weak solution.
\begin{definition}
\label{def:strong}
    Let $u \in L^1([0,T],C^0(\R^d)),$ $y \in \R^d$, and $\W\in \mathcal{P}(C^0([0,T],\R^d))$. Then a \textit{strong solution} to the SDE~\eqref{eq:main} with driving noise $\W$ is a probability measure $\solm(dX,dW) \in \mathcal{P}(C^0([0,T],\R^d) \times C^0([0,T],\R^d))$ such that $\solm$ is a weak solution, and $X$ is measurable with respect to $W$. That is, $X$ is $\overline{\sigma(W)}^\solm$ measurable, where $\overline{\sigma(Y)}^\solm$ denotes the $\solm$-completed $\sigma$-algebra generated by $Y$. 
\end{definition}

Note that with the non-anticipation condition given by Item~\ref{item:non-anticipatory}, this is equivalent to asking that for all $t\in[0,T]$, there exists a Borel measurable function $F_t$ such that $\solm$-almost surely, $X|_{[0,t]} = F_t(W|_{[0,t]})$. Thus a strong solution is uniquely determined by the noise---in an adapted manner---in contrast to a weak solution which is influenced by the noise in an adapted manner but not necessarily uniquely determined by it. The above definitions clearly correspond to the standard definitions of weak and strong solutions for (f)Bm driving noises.

\begin{definition}
\label{def:sde defs}
    Let $u \in L^1([0,T],C^0(\R^d)),$ $y \in \R^d$, and $\W\in \mathcal{P}(C^0([0,T],\R^d))$. Then the SDE for~\eqref{eq:main} with driving noise $\W$ has:
    \begin{enumerate}
        \item \textit{weak existence} if there exists a weak solution,
        \item \textit{weak uniqueness} if for all weak solutions $\solm,\tilde \solm$, we have that $\solm = \tilde \solm$,
        \item \textit{strong existence} if there exists a strong solution,
        \item\label{item:pathwise uniqueness} and finally \textit{pathwise uniqueness} if for every measure $\coupm(dX^1,dX^2, dW)$ such that the marginals $\coupm(C^0([0,T],\R^d), dX^2, dW)$ and $\coupm(dX^1, C^0([0,T],\R^d), dW)$ are weak solutions, it holds that $X^1 = X^2$ $\coupm$-almost surely.
    \end{enumerate}
\end{definition}

The only part of this definition that requires explanation is the definition of pathwise uniqueness. Pathwise uniqueness says that if we have the triple $(X^1,X^2,W)$ such that (the laws of) $(X^1,W)$ and $(X^2,W)$ are both weak solutions, then we have that $X^1 = X^2$ almost surely. This should be morally thought of as saying that the SDE solution $X$ is uniquely determined (in a pathwise sense) by $W$, thus if the $W$ is the same for two weak solutions, they must be the same.

A particularly useful way to analyze weak and strong solutions is through disintegration of the joint measure $\solm(dX,dW)$ into conditional measures. We let $\pi^t : C^0([0,T],\R^d) \to C^0([0,t],\R^d)$ denote the restriction map, so that $\pi^t \gamma(s) = \gamma(s)$ for $s \in [0,t]$. The proof of the following facts can be found in Appendix~\ref{appen:weak_sol_theory}.

\begin{lemma}
\label{lem:disintegration}
    Let $u \in L^1([0,T],C^0(\R^d)),$ $y \in \R^d$, and $\W\in \mathcal{P}(C^0([0,T],\R^d))$. Let $\solm(dX, dW)$ satisfy Item~\ref{item:marginalizes correct} of Definition~\ref{def:weak}---so that its $W$ marginal is $\W$---and Item~\ref{item:solves equation} of Definition~\ref{def:weak}---so that $(X,W)$ solves~\eqref{eq:main} $\solm$-almost surely. Then let $\solm(dX\cond W) \W(dW) = \solm(dX,dW)$ be the disintegration of $\solm$ into conditional measures. Then
    \begin{enumerate}
        \item \label{item:weak disintegration} $\solm$ is a weak solution if and only if for all $t\in[0,T]$, $W\mapsto\pi^t_*\solm(d\tilde{X}\cond W)$ is $\overline{\sigma(W_{[0,t]})}^{\mathbb{W}}$ measurable,
        \item \label{item:strong disintegration} and $\solm$ is a strong solution if and only if $\solm$ is a weak solution and $\solm(dX\cond W)$ is $\mathbb{W}$-a.s. supported on a singleton (i.e. $\solm(dX\cond W)$ is a Dirac $\delta$). 
    \end{enumerate}
\end{lemma}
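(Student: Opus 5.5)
For Item~\ref{item:weak disintegration}, the plan is to show that the non-anticipation condition (Item~\ref{item:non-anticipatory} of Definition~\ref{def:weak}) is equivalent to the stated measurability of the conditional law of $\pi^t X$. Fix $t$. Disintegration is unique $\W$-a.s., so $\pi^t_*\solm(d\tilde X\cond W)$ is a version of the conditional law of $\pi^t X$ given the full noise $W$. This is checked against test functions: for bounded Borel $f$ on $C^0([0,t],\R^d)$ and $g$ on $C^0([0,T],\R^d)$, we have $\E_\solm[f(\pi^t X) g(W)] = \int \int f(\pi^t \tilde X)\,\solm(d\tilde X\cond W)\, g(W)\,\W(dW)$.

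Conditional independence of $\pi^t X$ and $W$ given $W|_{[0,t]}$ is equivalent to the statement that for every bounded Borel $f$,
\[
\E[f(\pi^tX)\cond W] = \E[f(\pi^tX)\cond W|_{[0,t]}]
\]
almost surely. This is the standard characterization: $A\perp B \mid C$ with $C\subset\sigma(B)$ if and only if $\E[f(A)\cond B]$ is $\sigma(C)$-measurable.

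The two directions then go as follows.
\begin{itemize}
\item[($\Rightarrow$)] For each $f$ in a countable family that determines measures on the Polish space $C^0([0,t],\R^d)$ (for example bounded Lipschitz functions of finitely many rational evaluations, or a countable convergence-determining class), the map $W\mapsto \int f\, d\pi^t_*\solm(\cdot\cond W)$ agrees $\W$-a.s. with a $\sigma(W|_{[0,t]})$-measurable function. Consequently the measure-valued map is $\overline{\sigma(W|_{[0,t]})}^\W$-measurable, since the Borel $\sigma$-algebra on $\mathcal P$ is generated by such evaluations.
\item[($\Leftarrow$)] If the measure-valued map is measurable with respect to the completed $\sigma$-algebra, then so is $\int f\,d\pi^t_*\solm(\cdot\cond W)$. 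Thus the conditional expectation given $W$ is already $\overline{\sigma(W|_{[0,t]})}$-measurable, which yields conditional independence.
\end{itemize}
The only technical step I expect to require care is the handling of completions. Here I would note that modifying on a $\W$-null set does not change conditional expectations.

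For Item~\ref{item:strong disintegration}, assume $\solm$ is weak. If $X$ is $\overline{\sigma(W)}$-measurable, write $X=F(W)$ a.s. with $F$ Borel. Then $\solm(dX\cond W)=\delta_{F(W)}$ is a valid disintegration, and by a.s. uniqueness of disintegrations the given one equals it $\W$-a.s. Conversely, suppose $\solm(dX\cond W)=\delta_{F(W)}$ for $\W$-a.e. $W$. Then $F$ is measurable, being obtained as $F(W)=\int X\,\solm(dX\cond W)$ coordinatewise at rational times, which suffices by continuity of paths. In addition $\solm(\{X=F(W)\})=\int \delta_{F(W)}(\{F(W)\})\,\W(dW)=1$, so $X$ is $\overline{\sigma(W)}^\solm$-measurable. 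Note that the graph $\{X=F(W)\}$ is Borel because $F$ is measurable and the path space is Polish. The main obstacle overall is bookkeeping: the measurability of measure-valued maps and the reduction to countably many test functions. Everything else is standard conditional probability on Polish spaces.
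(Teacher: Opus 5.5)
Your proposal is correct and follows essentially the same route as the paper: reduce non-anticipation to $\E[f(\pi^tX)\cond W]=\E[f(\pi^tX)\cond \pi^tW]$ a.s., identify the left side with $\int f\,d\pi^t_*\solm(\cdot\cond W)$ via the disintegration, and for Item~2 use that $X=\Phi(W)$ a.s.\ is equivalent to $\solm(dX\cond W)=\delta_{\Phi(W)}$ for $\W$-a.e.\ $W$. Your added care with a countable measure-determining family of test functions and with completions fills in steps that the paper states in one line, without changing the argument.
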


The characterization of a weak solution in Lemma~\ref{lem:disintegration} says that the relevant informational condition for a weak solution is that the conditional law of $X|_{[0,t]}$ given $W$ depends only on $W|_{[0,t]}$ and not on $W|_{(t,T]}$. The fact that for a weak solution $\solm$, $\solm(dX\cond W)$ may still be a non-trivial measure encodes the central property of weak solutions: they may have additional randomness even after conditioning on the driving noise. This is clearly exemplified by the classical Tanaka example of a weak but not strong solution~\cite{watanabe_stochastic_2000}.

In contrast, the characterization of a strong solution says that a weak solution is strong if and only if $W$ uniquely determines $X$, that is the conditional law of $X$ given $W$ is a.s.\ deterministic---a single $\delta$ mass. This encodes the central distinction between weak and strong solutions: strong solutions have no additional randomness beyond that which is given by the driving noise.

This characterization of strong solutions and our definition of weak solutions allows us to directly note the following characterization of pathwise uniqueness.

\begin{lemma}
\label{lem:Yamada-Watanabe}
       Let $u \in L^1([0,T],C^0(\R^d)),$ $y \in \R^d$, and $\W\in \mathcal{P}(C^0([0,T],\R^d))$. Then the SDE for~\eqref{eq:main} with driving noise $\W$ has pathwise uniqueness if and only if it has weak uniqueness and all weak solutions are strong solutions.
\end{lemma}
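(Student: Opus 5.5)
We prove the two implications separately. Throughout we use the disintegration of Lemma~\ref{lem:disintegration}. In addition, for two weak solutions $\solm^1,\solm^2$ with the same noise law $\W$, we use the \emph{conditionally independent coupling}
\[
\coupm(dX^1,dX^2,dW) := \solm^1(dX^1\cond W)\,\solm^2(dX^2\cond W)\,\W(dW).
\]
Its $(X^i,W)$ marginals are exactly $\solm^i$, hence weak solutions.

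\textbf{Pathwise uniqueness implies weak uniqueness and that every weak solution is strong.} Take weak solutions $\solm^1,\solm^2$ and form $\coupm$ as above. Pathwise uniqueness gives $X^1=X^2$ $\coupm$-a.s. Therefore the pushforwards of $\coupm$ to $(X^1,W)$ and to $(X^2,W)$ agree, that is, $\solm^1=\solm^2$, which is weak uniqueness. For strongness, take a single weak solution $\solm$ and set $\solm^1=\solm^2=\solm$ in the coupling. Pathwise uniqueness gives that, for $\W$-a.e.\ $W$, the product measure $\solm(dX^1\cond W)\otimes\solm(dX^2\cond W)$ is concentrated on the diagonal. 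A probability measure $\mu$ on a Polish space with $\mu\otimes\mu(\{x_1=x_2\})=1$ must be a Dirac mass. Indeed, $\mu\otimes\mu(\text{diag})=\sum_x\mu(\{x\})^2$, and this equals $1$ only if some atom has mass $1$. Hence $\solm(dX\cond W)$ is a.s.\ a Dirac mass, and Lemma~\ref{lem:disintegration}, Item~\ref{item:strong disintegration}, shows that $\solm$ is strong.

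\textbf{Weak uniqueness and all weak solutions strong implies pathwise uniqueness.} Let $\coupm(dX^1,dX^2,dW)$ have marginals $\solm^1:=\mathrm{Law}(X^1,W)$ and $\solm^2:=\mathrm{Law}(X^2,W)$ that are weak solutions. By weak uniqueness $\solm^1=\solm^2=:\solm$, and by hypothesis $\solm$ is strong. By Lemma~\ref{lem:disintegration}, $\solm(dX\cond W)=\delta_{F(W)}$ for a Borel map $F$, so $X=F(W)$ $\solm$-a.s. Since $(X^i,W)$ has law $\solm$ under $\coupm$ and the set $\{(X,W): X=F(W)\}$ is Borel, we get $X^i=F(W)$ $\coupm$-a.s.\ for $i=1,2$. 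Hence $X^1=X^2$ $\coupm$-a.s.

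\textbf{Main obstacle.} The only delicate step is checking that the conditionally independent coupling $\coupm$ is legitimate in the first direction. First, $\coupm$ is a well-defined probability measure: the kernels $W\mapsto\solm^i(dX\cond W)$ are measurable, so their product is a measurable kernel. Second, each marginal $(X^i,W)$ of $\coupm$ must satisfy Definition~\ref{def:weak}. This is immediate because that marginal literally equals $\solm^i$, and weak solutions are defined purely through their law. We stress that we do not need any non-anticipativity of the joint triple $(X^1,X^2,W)$, since Definition~\ref{def:sde defs}, Item~\ref{item:pathwise uniqueness}, only constrains the pairwise marginals. This avoids the usual subtlety in Yamada--Watanabe arguments.
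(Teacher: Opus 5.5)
Your proposal is correct and follows essentially the same route as the paper. Both use the conditionally independent coupling $\solm^1(dX^1\cond W)\,\solm^2(dX^2\cond W)\,\W(dW)$ for the forward direction, and both show $X^i=F(W)$ $\coupm$-a.s.\ for the converse. The only cosmetic difference is that you get strongness by coupling a solution with itself and using $\mu\otimes\mu(\mathrm{diag})=\sum_x\mu(\{x\})^2$. This spells out the step the paper states briefly, namely that a product of conditional laws concentrated on the diagonal must be $\delta_{\Phi(W)}\otimes\delta_{\Phi(W)}$.
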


In fact, this lemma is equivalent to the Yamada--Watanabe theorem~\cite{yamada_uniqueness_1971} (see~\cite[Section 5.3]{karatzas_brownian_1998} for a textbook treatment). The ``dual Yamada--Watanabe theorem''~\cite{engelbert_theorem_1991}---that strong existence and weak uniqueness implies pathwise uniqueness---follows directly from our definitions. We provide the short and straightforward proof of Lemma~\ref{lem:Yamada-Watanabe} in Appendix~\ref{appen:weak_sol_theory}.

\begin{remark}
    Note that the definition of pathwise uniqueness doesn't require the coupling $\coupm$ to satisfy a similar non-anticipation/conditional independence property analogous to Item~\ref{item:non-anticipatory} of Definition~\ref{def:weak}. That is, we don't necessarily have that $(X^1|_{[0,t]}, X^2|_{[0,t]})$ and $W$ are conditionally independent given $W|_{[0,t]}$---even though $X^i|_{[0,t]}$ and $W$ are conditionally independent given $W|_{[0,t]}$ for $i =1,2$ (since $(X^i,W)$ is a weak solution). This definition is consistent with e.g.\ the one given in~\cite[Section 5.3]{karatzas_brownian_1998}. However, following similar logic to the proof of Lemma~\ref{lem:Yamada-Watanabe} one can readily verify that the notion given in Definition~\ref{def:sde defs}, Item~\ref{item:pathwise uniqueness} is equivalent to the (\textit{a priori} weaker) definition where one additionally requires that $(X^1|_{[0,t]}, X^2|_{[0,t]})$ and $W$ are conditionally independent under $\coupm$ given $W|_{[0,t]}$. 

    Once one adds this additional non-anticipation condition, the measures $\coupm$ being considered are precisely weak solutions to the $\R^{2d}$-valued SDE
    \[
        dY_t = U(t,Y_t)\,dt + dV_t,\]
    where $Y = (X^1,X^2), U(t,x^1,x^2):=(u(t,x^1),u(t,x^2)),$ and $V$ has law 
    \[\mathbb{V}(dV^1,dV^2)= \int_{C^0([0,1])} \delta_{W}(dV^1)\delta_{W}(dV^2)\mathbb{W}(dW).\]
    This is called the \textit{two-point equation} as it gives the evolution of two particles with the same drift and driving noise. Pathwise uniqueness for initial data $y \in \R^d$ is then exactly equivalent to weak uniqueness for the two-point equation for initial data $(y,y) \in \R^{2d}$. This is in fact precisely what we will use when proving Theorem~\ref{thm:general_non_path}. 

    Further, using similar logic to the proof of Lemma~\ref{lem:Yamada-Watanabe}, one can readily verify that for the two-point equation, weak uniqueness on the diagonal---i.e., weak uniqueness for all initial data of the form $(y,y)$ with $y \in \R^d$---is equivalent to weak uniqueness everywhere, namely for any data $(y^1,y^2) \in \R^{2d}$. Thus pathwise uniqueness (for the original SDE) is equivalent to weak uniqueness for the two-point SDE. This discussion can clearly be further generalized to the (analogously) defined $n$-point process for $n \in \N$ (or even the $\N$-point process for countably many initial particles). One can try to further extend to get ``stochastic flows'', which give coupled flows for all initial data. While this is possible in some weak form, these flows are not (\textit{a priori}) particularly nice due to the usual issues of combining uncountably many measure zero sets; stochastic flows only become properly well-behaved under some additional regularity. 
\end{remark}

\begin{remark}
    We note the work~\cite{jan_integration_2002} which also studies---using a fairly distinct theoretical framework---multiplicative noise SDEs for which there is weak uniqueness but not pathwise uniqueness (and hence no strong solutions). For those familiar with those works, we note that pathwise uniqueness is equivalent in their terminology to being a \textit{flow of maps}~\cite[Definition 6.1 and Lemma 6.5]{jan_integration_2002} and pathwise nonuniqueness to the solution being \textit{diffusive}~\cite[Definition 6.3 and Definition 6.4]{jan_integration_2002}. They consider more generally \textit{statistical solutions}---which correspond in our setting to weak solutions---given by \textit{flows of kernels}~\cite[Theorem 3.2]{jan_integration_2002}, which are precisely the conditional measures appearing in Lemma~\ref{lem:disintegration}.
\end{remark}

Finally, we give the last notion of uniqueness we will consider: path-by-path uniqueness.

\begin{definition}
Let $u \in L^1([0,T],C^0(\R^d))$, $y \in \R^d$, and $\W\in \mathcal{P}(C^0([0,T],\R^d))$. The ODE~\eqref{eq:main} with driving noise $\W$ has \textit{path-by-path uniqueness} if for $\W$-almost every driving noise $W$, there is a unique Carath\'eodory solution to~\eqref{eq:main}.
\end{definition}

Unlike weak and pathwise uniqueness, path-by-path uniqueness has no informational aspect to it. That is, there is no measurability hypothesis analogous to Item~\ref{item:non-anticipatory} in the definition of a weak solution, Definition~\ref{def:weak}. Since every weak solution is, by Item~\ref{item:solves equation} of Definition~\ref{def:weak}, also (almost surely supported on) a Carath\'eodory solution, it is clear that path-by-path uniqueness is stronger than pathwise uniqueness, which in turn, by Lemma~\ref{lem:Yamada-Watanabe}, is stronger than weak uniqueness. That is, we have the following chain of implications:
\[\text{path-by-path uniqueness}\Rightarrow \text{pathwise uniqueness}\Rightarrow \text{weak uniqueness}.\]

\subsection{Previous results in weak, pathwise, and path-by-path uniqueness}

\label{s:previous results}

\subsubsection{Brownian driving noise}

There is an extensive literature on proving various forms of uniqueness under (seemingly) optimal hypotheses when the driving noise is Brownian. As noted above, this subfield originates with~\cite{zvonkin_transformation_1974,veretennikov_strong_1981}, which prove pathwise uniqueness under the hypothesis that $u \in L^\infty_{t,x}$ and Brownian driving noise. This result was then strengthened to path-by-path uniqueness---a stronger uniqueness notion---by~\cite{davie_uniqueness_2007}. Additionally, there is the more recent thread in the literature proving weak uniqueness under relaxed regularity hypotheses. The works~\cite{flandoli_multidimensional_2017,zhang_heat_2018} prove weak existence and uniqueness in a variety of negative regularity settings, in particular when $u \in L^\infty_t C^{-s}_x$ for $s<1/2$, though of course in this setting some care needs to be taken in correctly defining weak solutions. In fact, under certain structural assumptions, the regularity can be relaxed even further:~\cite{delarue_rough_2016,cannizzaro_multidimensional_2018} allow for $C^{-2/3}_x$ drifts, under certain structural hypotheses allowing for the construction of certain paracontrolled objects and~\cite{grafner_weak_2024,hao_sdes_2025,kinzebulatov_sdes_2025} prove weak uniqueness (suitably defined) all the way down to $C^{-1}_x$ for divergence-free drifts. In summary, there is a robust positive theory showing weak uniqueness in negative regularity spaces and pathwise/path-by-path uniqueness in non-negative regularity spaces. Despite the thoroughness of the positive theory, in the regime of $L^\infty_tC^{-s}_x$ for $s \in (0,1/2)$, weak uniqueness is known but the problem of pathwise or path-by-path uniqueness remained open.

In addition to the theory developed in regularity spaces, there is a complementary theory in integrability spaces.\footnote{Many of the negative regularity weak uniqueness results described above actually develop weak well-posedness theory in \textit{both} negative regularity and $L^p$ (or Besov) integrability hypotheses. For simplicity, we refrain from further discussing the known results for $B^{-s}_{p,q}$-type regularity.} The foundational result in this direction is~\cite{krylov_strong_2005}, which relaxed the hypothesis of~\cite{zvonkin_transformation_1974,veretennikov_strong_1981} of $u \in L^\infty_{t,x}$ to prove pathwise uniqueness to $u \in L^q_t L^p_x$ with $(p,q)$ satisfying the Ladyzhenskaya--Prodi--Serrin \cite{ladyzhenskaya-regularity-1967,prodi_teorema_1959,serrin_interior_1962} condition (also known as the Krylov--R\"ockner condition) of $\frac{2}{q} + \frac{d}{p} < 1$. These results have been further improved on:~\cite{rockner_sdes_2023,rockner_sdes_2025} prove pathwise uniqueness in the critical case of equality in the Ladyzhenskaya--Prodi--Serrin condition and~\cite{anzeletti_path-by-path_2025} generalizes~\cite{davie_uniqueness_2007} to prove path-by-path uniqueness in the same setting as~\cite{krylov_strong_2005}. In contrast to the theory in regularity spaces, there is no parameter range in the integrability spaces for which weak uniqueness is known but pathwise uniqueness is not known. In fact, by~\cite[Remark 1.11]{galeati_solution_2025}, weak---hence also pathwise and path-by-path---uniqueness can fail for any $(p,q)$ with $\frac{2}{q} + \frac{d}{p} > 1$ and $p >d$.

\subsubsection{Fractional Brownian driving noise}

There is also a substantial literature on the case of fractional Brownian motion driving noise. For simplicity, we focus here only on the regularity space theory, ignoring the $L^q_t L^p_x$ developments.~\cite{nualart_regularization_2002} proved a version of~\cite{zvonkin_transformation_1974,veretennikov_strong_1981} for fractional Brownian driving noises, showing pathwise uniqueness for $C^\alpha_x$ drifts whenever $\alpha > (1 - \frac{1}{2H}) \lor 0$ for Hurst parameter $H \in (0,1).$ \cite{catellier_averaging_2016,galeati_noiseless_2021,galeati_solution_2025} improve the result to path-by-path uniqueness for $L^\infty_t C^\alpha_x$ whenever $\alpha > 1 - \frac{1}{2H}$, allowing $\alpha <0$. Note that for $H =1/2$, these results respectively coincide with~\cite{zvonkin_transformation_1974,veretennikov_strong_1981}
 and~\cite{davie_uniqueness_2007} (up to the critical case of equality). \cite{butkovsky_weak_2025} shows weak uniqueness for solutions to the equation with an autonomous drift in $C^\alpha_x$ and driven by fBm with Hurst parameter $H \in (0,1/2)$ with $\alpha > \frac{1}{2} - \frac{1}{2H}$. For $H =1/2$, this corresponds to the $-1/2$ regularity threshold proved in~\cite{flandoli_multidimensional_2017,zhang_heat_2018}.

\subsection{Comparison with spontaneous stochasticity}

The main results of this work give nonuniqueness to certain finite dimensional differential equations. Another area in which such a phenomenon is a central area of interest is the study of spontaneous stochasticity in the fluid and passive scalar turbulence literature. Fixing some divergence-free velocity field $u \in L^\infty_t C^\alpha_x$ for some $\alpha \in [0,1)$, for all $\kappa>0$, let $X_t^\kappa$ solve
\begin{equation}
\label{eq:spontaneous stochasticity}
\begin{cases}
    dX^\kappa_t = u(t,X^\kappa_t)dt + \kappa dW_t\\
    X^\kappa_0 =y,
\end{cases}\end{equation}
where $W$ is a standard Brownian motion. Roughly, spontaneous stochasticity~\cite{bernard_slow_1998,gawedzkiKrzysztofGawedzkiSoluble2008} refers to the persistence of noise in the vanishing noise limit, e.g.\ $\limsup_{\kappa \to 0^+} \mathrm{Var}_W(X^\kappa_1) >0$, where we write $\mathrm{Var}_W$ to emphasize that the probability is coming purely from the driving Brownian motion $W_t$. Since $X^\kappa_t$ concentrates on Carath\'eodory solutions to the ODE without driving noise as $\kappa \to 0^+$, spontaneous stochasticity in particular implies a form of ODE nonuniqueness. However, the nonuniqueness phenomenon of spontaneous stochasticity is of a rather different character than pathwise nonuniqueness for the SDE. Since $\alpha \in [0,1)$, for all $\kappa>0$ the SDE~\eqref{eq:spontaneous stochasticity} has path-by-path and hence pathwise uniqueness by~\cite{davie_uniqueness_2007}, the ODE nonuniqueness is only appearing in the $\kappa \to 0^+$ limit. 

In the setting considered in this work, we are interested in pathwise nonuniqueness to the SDE without sending the noise coefficient to zero. As such, pathwise nonuniqueness is both philosophically and technically rather different than spontaneous stochasticity. Nonetheless, the ideas developed in the spontaneous stochasticity literature certainly helped inspire the construction of this work. In particular, the works~\cite{drivas_anomalous_2022,colombo_anomalous_2023,elgindi_norm_2024} prove spontaneous stochasticity (or really the equivalent property of \textit{anomalous dissipation} by~\cite{drivas_lagrangian_2017}) using rescaled alternating shear flows, which were in turn inspired by the singular mixing flows of~\cite{aizenman_vector_1978,depauw_non_2003}. Our construction in some ways resembles a randomized version of this alternating shear flow construction---though the actual mechanism of nonuniqueness is rather different due to the CLT scaling as discussed in Section~\ref{sec:heuristics}. In fact, this CLT scaling relates our construction to another spontaneous stochasticity example, the Kraichnan model~\cite{kraichnanSmallScaleStructure1968,bernard_slow_1998,rowan_anomalous_2024}. 

\subsection{Open problems}

We note some open problems that would constitute nice additions to the literature and help complete the picture of regularization by noise in H\"older spaces. The first such result would be extending the SDE weak well-posedness theory of~\cite{butkovsky_weak_2025} to time dependent drifts and $H \in (1/2,1)$. This would, together with the current work, show that for $H \in (1/2,1)$ there is an (optimal) regime of weak but not pathwise uniqueness. It would additionally show that the solutions constructed in Corollary~\ref{cor:pathwise failure fBm} are the unique weak solutions for the associated SDE, but are not strong solutions, just as in the Brownian case.

There is also the problem of proving pathwise nonuniqueness for $H \in (0,1/2)$. We believe this should follow by an appropriate combination of the weak solution theory of~\cite{butkovsky_weak_2025} with the explosive separation estimate of Theorem~\ref{thm:main-general}, Item~\ref{item:general explosion}.

There is the question of extending the current work to allow for $\alpha$-stable L\'evy processes. We believe in particular that Theorem~\ref{thm:main-general} should be provable under the hypothesis that the driving noise is c\`adl\`ag with appropriate bounds on the jump size/density.  

Finally, there is the much more ambitious problem of understanding a similar phenomenon of pathwise nonuniqueness in the presence of weak uniqueness for the generic negative regularity fields considered in~\cite{armstrong_superdiffusion_2026}.

\subsection{Acknowledgments}
The authors would like to thank Oleg Butkovsky and Lucio Galeati for stimulating discussions. The first author was partially supported by NSF grant DMS-2342349.

\section{The heuristic argument and discussion}\label{sec:heuristics}

In this section, we first present the primary heuristic behind our argument. We then further discuss our results: first explaining the structure of the argument, then discussing the general weak SDE theory allowing us to pass from instability estimates like those of Theorem~\ref{thm:main-general} to nonuniqueness statements like those of Theorem~\ref{thm:main-brownian}. Finally, we provide some discussion and interpretation of pathwise nonuniqueness in the setting of weak uniqueness, giving a comparison to the DiPerna--Lions theory.

\subsection{A heuristic computation}

\label{sec:heuristic-comp}
The central idea is that (qualitative) nonuniqueness is downstream of (quantitative) separation estimates. In particular, what we want to show is that, if we take any two points $x,y$ with $x \ne y$, and flow them under the ODE with the same driving noise, then they will separate to a macroscopic amount---independent of their initial separation---at unit time. This will be shown by constructing, for all $n \in \N$, a velocity field that takes particles separated at scale $2^{-n}$ and separates them to scale $2^{-n+1}$ in time $\tau_n$. If $\sum_n \tau_n <\infty$, then we can combine these velocity fields so that particles that are infinitesimally initially separated at time $t=0$ are macroscopically separated at unit time. In order to get the desired regularity, we need that the velocity field constructed for $n \in \N$ obeys $n$-uniform $L^\infty_t C^\alpha_x$ bounds. We emphasize also that the two particles will share the same driving noise throughout, so we need to ensure the driving noise does not interfere with the separation estimate. We will assume that the driving noise $W$ is in $C^\beta([0,1])$ for some $\beta$ fixed; it is this assumption we will rely on to ensure that the driving noise does not damage the separation estimate.

So, let us fix $n \in \N, \alpha  \in \R, \beta \in (0,1)$, and let $W \in C^\beta([0,1])$ be frozen. We then want to build a velocity field $u(t,x)$ (independent of $W$) and times $\tau_n$ such that $\|u\|_{L^\infty_t C^\alpha_x}$ is bounded uniformly in $n$, $\sum_n \tau_n <\infty$, and if $x, y$ are such that $|x-y| \geq 2^{-n}$ then---letting $X_t,Y_t$ solve~\eqref{eq:main} with $X_0=x, Y_0 =y$---we have that $|X_{\tau_n} - Y_{\tau_n}| \geq 2^{-n+1}$.

Let us take $u$ to be a fixed shear flow on some time scale $\delta_n$ (it will turn out that $\delta_n \ll \tau_n$) and we will take it to be fluctuating in space on length scale (about) $2^{-n}$, in particular we will take it to be mean-zero periodic with some periodicity of about $2^{-n-2}$. In order to maintain the proper $C^\alpha_x$ regularity, we are then forced to take the magnitude of the velocity field to be $\approx 2^{-\alpha n}$. On the time scale $\delta_n$, $W_t$ will change on the length scale $\delta_n^\beta$, using that $W \in C^\beta$. We want that $\delta_n^\beta \ll 2^{-n}$, as otherwise, on the time scale $\delta_n$, the driving noise $W$ will drag the particles $X_t, Y_t$ over many periods of the velocity field $u$. As $W$ drags our particles over a period of the velocity field, the ``hit'' the velocity field gives the particles is (possibly) very small, since the field is zero-mean over the period, so potentially the effect of the velocity field would perfectly average. One could hope to use the probabilistic structure of $W$ to ensure that the averaging isn't perfect and get a nontrivial ``hit'' from the velocity field that way. That strategy---which is essentially that of homogenization e.g.\ as used in a somewhat similar fashion in~\cite{armstrong_anomalous_2025,burczak_anomalous_2023_fixed,chatzigeorgiou_gaussian_2025,armstrong_superdiffusive_2024,armstrong_superdiffusion_2026,burczak_scalar_2026}---departs meaningfully from our current strategy since we don't want to assume any specific structure on $W$: Theorem~\ref{thm:main-general} works for any $W \in C^\beta$. Instead, the restriction that $\delta_n^\beta \ll 2^{-n}$ ensures that such cancellation over a period cannot occur as the driving noise moves the particle only a trivial fraction of the periodicity length scale on the time scale $\delta_n$.

So, taking $\delta_n \approx 2^{-\beta^{-1}n }$ so $\delta_n^\beta \ll 2^{-n}$ (provided we take the prefactor on $2^{-\beta^{-1} n}$ small enough), we will now pretend the driving noise does not exist, since it essentially does not move the particle in the time scale under study. We want to compute how far apart $X_t, Y_t$ move on the time scale $\delta_n$. Since $|X_t - Y_t| \geq 2^{-n}$ and $u$ is fluctuating on length scale $2^{-n-2}$ and has magnitude $2^{-\alpha n}$, the reasonable guess is that $X_{\delta_n} - Y_{\delta_n} = X_0 - Y_0 + R$ where $R \approx 2^{-\alpha n} \delta_n \approx 2^{-(\alpha+ \beta^{-1}) n}$. We want to separate $|X_t - Y_t| \geq 2^{-n+1}$, so we'd be (more or less) done if $2^{-(\alpha+ \beta^{-1}) n} \gg 2^{-n}$, or $\alpha+ \beta^{-1} \leq 1$. However, in the Brownian case $\beta = 1/2-$, so this would become $\alpha < -1$.\footnote{The appearance of $\alpha <-1$ is strongly related to the $-1$ threshold of weak uniqueness for Brownian drivers in divergence-free fields. Currently in the argument we are trying to work on a time scale on which the driving noise does essentially nothing and get all the separation in that time scale. However, if you could really do that, it turns out you can construct distinct weak solutions rather than just having pathwise nonuniqueness, hence the restriction of $\alpha<-1$.} As our goal then is to only have $\alpha <0$, we cannot just stop after time $\delta_n$.

It is at this point where the random nature of the velocity field becomes essential. The idea is to choose a new velocity field on every time interval of size $\delta_n$ a total of $N_n$ many times, taking $\tau_n = N_n \delta_n$. Then 
\[X_{\tau_n} - Y_{\tau_n} = X_0-Y_0 + \sum_{j=1}^{N_n} R_j,\]
where the $R_j$ are the ``hits'' of the velocity field on each $\delta_n$ time interval. Once we have that $|\sum_{j=1}^{N_n} R_j| \gg 2^{-n}$ we can conclude. The issue is how to lower bound the sum of many small hits without having specific control of $W$. The answer is given by the central limit theorem and statistical independence. By taking $u$ to be random and independently selected on each time interval, we can apply a quantitative CLT to conclude that
\[\Big|\sum_{j=1}^{N_n} R_j\Big| \approx N_n^{1/2} |R| \approx N_n^{1/2} 2^{-(\alpha +\beta^{-1}) n},\]
where we plugged in the typical size of $R$ as computed above. We see that as we make $N_n$ large, the better the separation becomes. Our restriction on $N_n$ is that we want 
\[\sum_n \tau_n = \sum_n N_n \delta_n \approx \sum_n N_n 2^{-\beta^{-1} n} <\infty.\]
Thus we must take $N_n \ll 2^{\beta^{-1} n}$. Pretending we could take $N_n \approx 2^{\beta^{-1}n}$ (which we can do up to a very mild correction), we get that 
\[\Big|\sum_{j=1}^{N_n} R_j\Big| \approx 2^{-(\alpha + \beta^{-1}/2)n}.\]
In order to get the separation we want on the time scale $\tau_n$, we need then that $2^{-(\alpha + \beta^{-1}/2)n} \gg 2^{-n}$, or 
\begin{equation}
\label{eq:alpha beta relation heuristic}
\alpha  < 1 - \frac{1}{2\beta},
\end{equation}
which is equivalent to the condition of Theorem~\ref{thm:main-general} and for $\beta = 1/2-$, equivalently to $\alpha < 0$ as in Theorem~\ref{thm:main-brownian}.

In summary, we break up $[0,1]$ into time intervals (with size depending on $\beta$, the H\"older regularity of the driving noise) on which the driving noise does not move the ODE solutions an appreciable fraction of the fluctuation length scale $2^{-n}$. On each of these time intervals, the additional separation is much too small to immediately go from particles separated on scale $2^{-n}$ to scale $2^{-n+1}$, so we need to sum up across many such short time intervals. To control such a sum, we select the velocity independently across each short time interval and use a quantitative CLT to control the total effect on the separation of the particles. The more short time intervals, the better the separation, but we need to make sure that the total amount of time spent on length scale $2^{-n}$ is summable in $n$, which creates a restriction on how many short time intervals we can use. This restriction, together with the $L^\infty_t C^\alpha_x$ constraint together become~\eqref{eq:alpha beta relation heuristic}, which relates the regularity of the velocity field to the regularity of the driving noise.

\subsection{Discussion of results}
\label{sec:contributions}

\subsubsection{Explosive separation and pathwise nonuniqueness}

The above heuristic is made precise in Sections~\ref{sec:one scale}--\ref{sec:multiscale}, using the velocity fields constructed in Section~\ref{sec:definition of fields}. We will build our velocity fields out of a single shear flow at each time, which will slightly modify the heuristic argument; see the beginning of Section~\ref{sec:one scale}. Our argument vitally uses the Berry--Esseen theorem as a quantitative CLT to capture the scaling that we argued roughly above. The use of CLT scaling in order to get the desired cascade of separations is quite different than the other strategies (of which we are aware) for constructing nonunique ODE solutions. However, this is seemingly necessary in this setting, since we have absolutely no control of what the driving noise $W$ is doing, besides that it does not fluctuate too much. For a deterministic velocity field, it seems we could build a $W$ to maximally conspire to suppress the separation growth. For a random velocity field, the independence across time (and independence from $W$) ensures that $W$ almost surely cannot conspire to suppress the separation.

\begin{remark}
\label{rem:measure notation}
    As we begin to enter the technical core of the paper, let us note the following notational convention. We will be working with a variety of probability measures in this work, the most basic of which is the measure on velocity fields and the measure on driving noises, which we will always take to be independent from each other. In Sections~\ref{sec:definition of fields}--\ref{sec:multiscale}, we will be taking the driving noise to be fixed and control probabilities in the velocity field measure. In Section~\ref{sec:quantitative to qualitative} and Appendix~\ref{appen:weak_sol_theory}, we will (primarily) take $u$ fixed and control probabilities in the driving noise measure. In order to avoid confusion, we refer to the measure on velocity fields $u$ as $\volm$ and the measure on the driving noises $W$ as $\W$. We will also have the weak solution measures, which are naturally coupled to the driving noise (and implicitly depend on the velocity field, but when we are discussing weak solution measures, we will always have the velocity field fixed). We denote the measure on weak solutions $(X,W)$ by $\solm.$ Finally, when considering pathwise uniqueness statements, we will work with coupled solutions $(X^1,X^2,W)$; we refer to the measure on these coupled solutions as $\coupm$.
\end{remark}

The consequence of Sections~\ref{sec:definition of fields}--\ref{sec:multiscale} will be (a quantitative version of) Theorem~\ref{thm:main-general}, which gives an asymptotically almost sure macroscopic separation of particles started arbitrarily close together. We refer to this phenomenon as explosive separation, which we give the following precise definition.

\begin{definition} \label{def:explosive separation}
A random velocity field $u$ with measure $\volm \in \mathcal{P}(C^\infty_{\mathrm{loc}}((0,1] \times \R^d))$ is \textit{explosively separating} for a sequence of times $T^n\rightarrow 0^+$ and a fixed $W \in C^0([0,1],\R^d)$ if 
\[ \lim_{n \to \infty} \limsup_{\delta \to 0} \limsup_{|x-y| \to 0} \limsup_{m \to \infty}\volm(|X^{T^m,x,u, W}_{T^n} - X^{T^m,y,u, W}_{T^n}|< \delta) =0.\]
\end{definition}

In order to conclude Theorem~\ref{thm:main-brownian}, Corollary~\ref{cor:caratheodory general}, and Corollary~\ref{cor:pathwise failure fBm} from the explosive separation estimates of Theorem~\ref{thm:main-general}, we will need a qualitative theory that takes (qualitative) explosive separation and produces (qualitative) nonuniqueness. We want to prove pathwise nonuniqueness, which is a property of weak solutions and hence uses the specific informational structure of the driving noise path measure. We will need a very mild regularity condition on the driving noise path measures, which we define next, recalling the restriction notation $\pi^t$ used in Section~\ref{subsec:uniqueness_def}.

\begin{definition}\label{def:regular_driving_noise}
Let $\W\in \mathcal{P}(C^0([0,1],\R^d))$. Then $\W$ is a \textit{regular driving noise} if for all $t\in[0,1]$, there exists a representative of the conditional law of $W$ given $\pi^t W$, denoted by $\W(d\tilde{W}\cond \pi^tW)$, such that $\pi^tW\mapsto \mathbb{W}(d\tilde W\cond \pi^tW)$ is continuous in the weak topology on probability measures. 
\end{definition}

\begin{remark}\label{rem:regular_driving_noise}
(f)Bm is clearly a regular driving noise for any $H \in (0,1)$ (e.g.\ by~\cite[Theorem 3.1]{sottinen2017prediction}). We also have that a deterministic path $W\in C^0([0,1],\R^d)$ defines a regular driving noise by taking the path measure to be $\delta_{W}.$ It is also clear that solutions to (sufficiently regular) SDE (or RDE) driven by (f)Bm are regular driving noises. Finally, the sum of independent regular driving noises is a regular driving noise, for example sums of independent fBms with different Hurst parameters. 
\end{remark}

With Definition~\ref{def:explosive separation} and Definition~\ref{def:regular_driving_noise} in hand, we are ready to state our main result that relates explosive separation estimates to pathwise (and Carath\'eodory) nonuniqueness.

\begin{theorem}\label{thm:general_non_path}
Suppose that for a regular driving noise with measure $\mathbb{W}\in \mathcal{P}(C^0([0,1],\R^d))$, an independent random velocity field $u$ with measure $\volm \in \mathcal{P}\big(C^\infty_{\mathrm{loc}}((0,1] \times \R^d) \cap L^1([0,1], C^0(\R^d))\big)$, and a sequence of times $T^n\rightarrow 0^+$, $u$ is explosively separating for $T^n$ and $\mathbb{W}$-almost every $W$. Then, for every $y\in\R^d$, $\volm$-almost every $u$, the SDE~\eqref{eq:main}
with driving noise $\mathbb{W}$ exhibits pathwise nonuniqueness. More precisely---$\volm$-almost surely in $u$---we have the following statements.
\begin{enumerate}
    \item \label{item:high prob pathwise}
    The pathwise nonuniqueness happens instantaneously and with arbitrarily high probability: for all $\ep,\gamma>0,$ there exists $\coupm(dX^1,dX^2, dW)$ so that the marginals $\coupm(C^0([0,1],\R^d), dX^2, dW)$ and $\coupm(dX^1, C^0([0,1],\R^d), dW)$ are weak solutions to~\eqref{eq:main}, and $\coupm(X^1|_{[0,\ep]} = X^2|_{[0,\ep]}) \leq \gamma$.
    \item \label{item:as non determin} The nonuniqueness is ``almost sure in $W$'': there exists a weak solution $\solm(dX,dW)$ to~\eqref{eq:main} such that for all $\ep>0$, the conditional measure of $X|_{[0,\ep]}$ given $W$---$\pi^\ep_* \solm(d\tilde{X} \cond W)$---is $\W$ almost surely supported on more than one $X|_{[0,\ep]}$ path. In particular, for almost every $(u,W)$,~\eqref{eq:main} has more than one Carath\'eodory solution on every time interval $[0,\ep]$ with $\ep>0$.
    \item \label{item:weak unique as pathwise nonunique} Under weak uniqueness, the pathwise nonuniqueness is instantaneous and ``almost sure'': if~\eqref{eq:main} admits a unique weak solution, then for every $\ep>0,$ there exists a measure on the coupled process $(X^1,X^2,W)$, $\coupm(dX^1,dX^2, dW)$, such that the marginals $\coupm(C^0([0,1]), dX^2, dW)$ and $\coupm(dX^1, C^0([0,1]), dW)$ are weak solutions to~\eqref{eq:main} and $\coupm(X^1|_{[0,\ep]} = X^2|_{[0,\ep]}) =0$.
\end{enumerate}
\end{theorem}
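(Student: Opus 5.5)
We build weak solutions as limits of classical solutions started at the positive times $T^m$, and then exploit explosive separation to show that two copies of such a limit, driven by the same $W$, must be macroscopically apart at the times $T^n$.

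\textbf{Approximating couplings.} Fix $y$ and $u$ in a full-measure set, to be specified below. For each $m$ and each pair of nearby initial points $x^1,x^2$ close to $y$, consider the classical solutions $X^{T^m,x^i,u,W}$ on $[T^m,1]$. Extend them to $[0,T^m]$ by solving the equation with the smooth-in-$x$ field truncated to $t\ge T^m$, or simply freeze them, so that they start at $y$ at time $0$. Because the continuation is a measurable function of $\pi^t W$, the laws
\[\coupm^{m,x^1,x^2}=\mathrm{Law}\big(X^{1,m},X^{2,m},W\big)\]
give pairs $(X^{i,m},W)$ that are strong, hence weak, approximate solutions.

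Since $u\in L^1_tC^0_x$ and $W$ is tight in $C^0$, the family is tight. Along $m\to\infty$ and $x^i\to y$ we extract limits $\coupm$. The key point is that each marginal remains a weak solution, which needs two checks:
\begin{itemize}
\item Item~\ref{item:solves equation} passes to the limit because $u\in L^1_tC^0_x$ makes the integral functional continuous.
\item The non-anticipation property passes to the limit via Lemma~\ref{lem:disintegration} together with the regularity of $\W$ (Definition~\ref{def:regular_driving_noise}). Continuity of $\pi^tW\mapsto\W(d\tilde W\cond\pi^tW)$ lets us rewrite conditional independence as the identity
\[\E\big[F(X|_{[0,t]},\pi^tW)\,G(W)\big]=\E\big[F(X|_{[0,t]},\pi^tW)\,\textstyle\int G\,d\W(\cdot\cond\pi^tW)\big],\]
which involves only continuous test functionals and so is stable under weak convergence.
\end{itemize}

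\textbf{Separation and Item~\ref{item:high prob pathwise}.} Explosive separation holds for a.e. $W$, and $u$ is independent of $W$. By Fubini and dominated convergence, for $\volm$-a.e. $u$ the same limit statement holds with $\volm$ replaced by $\W$ at fixed $u$. This requires passing from an averaged-in-$u$ limsup to an a.s.-in-$u$ statement. Here I would use Fatou and pick subsequences $n_k,\delta_k$ along which the averaged probabilities are summable, then apply Borel–Cantelli in $u$. I expect this to be the main technical obstacle, because of the order of the four limits.

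Given this, for any $\gamma$ choose $n$ with $T^n<\ep$ and $\delta$ so that the approximating couplings satisfy
\[\coupm(|X^1_{T^n}-X^2_{T^n}|<\delta)\le\gamma.\]
By the portmanteau theorem the open event $\{|X^1_{T^n}-X^2_{T^n}|>\delta/2\}$ keeps probability at least $1-\gamma$ in the limit, which gives Item~\ref{item:high prob pathwise}.

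\textbf{Item~\ref{item:weak unique as pathwise nonunique}.} Under weak uniqueness both marginals equal the unique solution $\solm$. Disintegrate the limit coupling over $W$ and replace it by the conditionally independent product
\[\solm(dX^1\cond W)\,\solm(dX^2\cond W)\,\W(dW).\]
Each marginal of this product is still $\solm$. By Item~\ref{item:high prob pathwise}, $\solm(\cdot\cond W)$ is not a Dirac mass on $[0,\ep]$ with positive probability. To get probability $0$ rather than just positive probability, I would take a countable mixture, or iterate over $\ep\downarrow$, using a zero–one argument in the spirit of Blumenthal. Since $u$ is independent of $W$, the event that $\solm(dX|_{[0,\ep]}\cond W)$ is a Dirac mass for all small $\ep$ lies in the germ $\sigma$-field at time $0^+$.

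\textbf{Item~\ref{item:as non determin}.} Take $\solm$ to be the average over the limit couplings of $\tfrac12(\text{law of }(X^1,W)+\text{law of }(X^2,W))$, then mix over countably many $\ep_k$ and $\gamma_k$ with positive weights. Convex combinations of weak solutions are weak solutions, because the measurability criterion of Lemma~\ref{lem:disintegration} is preserved under mixing. The resulting conditional law given $W$ charges two distinct paths on $[0,\ep]$ with conditional probability that is positive for a.e. $W$. The Carathéodory statement then follows from Item~\ref{item:solves equation}.
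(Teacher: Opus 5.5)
Your approximating couplings (classical solutions from $(T^m,x)$, frozen before $T^m$, tightness from $u\in L^1_tC^0_x$, non-anticipation passed to the limit via regularity of $\W$) and your mixture construction for Item~2 follow the paper's route. Item~1 has two slips that are easily fixed.
First, at fixed $u$ you cannot recover the $\limsup$ statement. Only a $\liminf$ version survives, with $u$-dependent choices of $n$, $\delta$ and of the sequence $(x^\ell,m_\ell)$. That is what your Fatou/Borel--Cantelli sketch actually produces, and it is all Item~1 needs (this is Lemma~\ref{lem:fubini}).
Second, portmanteau applied to an open set bounds the limit from \emph{above}. So either use the closed set $\{|X^1_{T^n}-X^2_{T^n}|\ge\delta\}$, or write $\coupm(|X^1_{T^n}-X^2_{T^n}|<\delta)\le\liminf_\ell(\cdots)\le\gamma$.

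The genuine gap is Item~3. Set $\mu_W:=\pi^\ep_*\solm(d\tilde X\cond W)$. The conditionally independent product $\solm(dX^1\cond W)\,\solm(dX^2\cond W)\,\W(dW)$ gives $\coupm(X^1|_{[0,\ep]}=X^2|_{[0,\ep]})=\int\sum_a\mu_W(\{a\})^2\,\W(dW)$. This vanishes only if $\mu_W$ is a.s.\ atomless, and nothing in your argument gives that. For instance, if $\mu_W=\tfrac12\delta_a+\tfrac12\delta_b$, the product charges the diagonal with probability $1/2$. Mixing such couplings does not help either, since each component still charges the diagonal.

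Your zero--one patch targets a weaker property, namely that $\mu_W$ is a.s.\ not a Dirac mass. That property follows at once from Item~1 and weak uniqueness: on the event that $\mu_W$ is a Dirac mass, every coupling with both marginals $\solm$ forces $X^1|_{[0,\ep]}=X^2|_{[0,\ep]}$, so this event has $\W$-probability at most $\gamma$ for every $\gamma$. But the property does not suffice. Moreover, a Blumenthal-type law is not available for a general regular driving noise.

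Two ingredients are missing.
(i) Use Item~1 quantitatively to exclude atoms of mass $>1/2$. An atom of mass $p$ forces every self-coupling of $\mu_W$ to put mass $\ge 2p-1$ on the diagonal. So atoms of mass $\ge\tfrac12+\delta$ on a $W$-set of probability $\ge\delta$ would contradict Item~1 with $\gamma<2\delta^2$.
(ii) Construct a self-coupling of $\mu_W$, measurable in $W$, that avoids the diagonal as soon as no atom exceeds $1/2$. For example, after a Borel isomorphism $C^0([0,\ep])\cong[0,1]$, take the law of $(Q^{\mu_W}(Z),Q^{\mu_W}(Z+\tfrac12 \bmod 1))$, with $Z$ uniform and $Q^{\mu}$ the quantile function (Lemma~\ref{lem:coupling no diagonal}).
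One then extends from $[0,\ep]$ to $[0,1]$ by gluing the conditional laws of $X$ given $(W,X|_{[0,\ep]})$. This is admissible because pathwise uniqueness imposes no joint non-anticipation condition on the coupling.
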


Theorem~\ref{thm:general_non_path} will be proved in Section~\ref{sec:quantitative to qualitative}. We note that Item~\ref{item:weak unique as pathwise nonunique} of Theorem~\ref{thm:general_non_path}, Item~\ref{item:brownian explosion} of Theorem~\ref{thm:main-general}, and the weak uniqueness for SDE driven by Brownian motion with $u \in L^\infty_t C^s_x$ for $s \in (-1/2,0)$ given by~\cite{flandoli_multidimensional_2017} immediately imply Theorem~\ref{thm:main-brownian}. We also have that Items~\ref{item:high prob pathwise} and~\ref{item:as non determin} of Theorem~\ref{thm:general_non_path} and Item~\ref{item:general explosion} of Theorem~\ref{thm:main-general} imply Corollary~\ref{cor:caratheodory general} and Corollary~\ref{cor:pathwise failure fBm}.

We thus see that Theorem~\ref{thm:general_non_path} provides a robust tool for taking explosive separation estimates of the form given by Definition~\ref{def:explosive separation} and generating sharp and maximal statements of stochastic nonuniqueness. Thus Theorem~\ref{thm:general_non_path} justifies our general approach to proving pathwise and path-by-path nonuniqueness, which is to prove instability estimates that, in their qualitative form, become explosive separation estimates as in Definition~\ref{def:explosive separation}. We note though that---as will be clear in Sections~\ref{sec:one scale} and~\ref{sec:multiscale}---it is important to propagate quantitative separation estimates scale-by-scale in our multiscale iteration. Only at the end, once we have finished the iteration and concluded the quantitative separation estimate (e.g.\ Proposition~\ref{prop:brownian separation}) can we soften it to its qualitative form (e.g.\ Theorem~\ref{thm:main-general}, Item~\ref{item:brownian explosion}).

\begin{remark}
\label{rem:regularity tech}
    One technical point of difficulty---which is the origin of the velocity fields $u^\rho$ in addition to the velocity fields $v^\alpha$---is that for $\alpha<0$, the velocity field we construct in Section~\ref{sec:definition of fields} following the heuristic given in Section~\ref{sec:heuristic-comp} will be in $L^\infty_t C^\alpha_x$ but not $L^1_t C^0_x$. While $v^\alpha$ for $\alpha<0$---by Theorem~\ref{thm:main-general}, Item~\ref{item:general explosion}---will a.s.\ obey explosive separation estimates when given Brownian motion driving noise, we won't be able to directly conclude the desired pathwise and path-by-path uniqueness from Theorem~\ref{thm:general_non_path}. However, since we can take $\alpha<0$ arbitrarily close to $0$ and we have $v^\alpha \in L^\infty_t C^\alpha_x$, we will have that $v^\alpha$ is very close to being in $L^1_t C^0_x$. If we could gain a bit from the measure term in time---since the velocity field will only be large for a short amount of time---we could hope to get that actually $v^\alpha \in L^1_t C^0_x$. However, as constructed---which is seemingly necessary to get the optimal constraint on $\beta$ with respect to $\alpha$---the gain in time measure is essentially negligible. As such, we need to introduce the velocity fields $u^\rho$, for which there is a better gain in the time measure and so that $u^\rho \in L^1_t C^0_x \cap L^\infty_t C^{-\rho}_x.$ As is stated in Theorem~\ref{thm:main-general}, Item~\ref{item:brownian explosion}, $u^\rho$ will give explosive separation for any driving noise in $C^{1/2-}_t$, hence in particular for Brownian motion.
\end{remark}

\subsubsection{Weakly unique, pathwise nonunique, and DiPerna--Lions theory}

Weak uniqueness in the context of pathwise nonuniqueness is a subtle phenomenon. Weak uniqueness, as is made clear by Lemma~\ref{lem:disintegration}, tells us there is (almost surely) a unique way to choose the conditional law of $X$ given the driving noise $W$---$\solm(dX \cond W)$---in a way that respects the informational (or, equivalently, measurability) structure of the problem---that is such that $X|_{[0,t]}$ does not nontrivially depend on the future of the driving noise $W|_{[t,T]}$. 

However, when the unique weak solution isn't strong---or equivalently, by Lemma~\ref{lem:Yamada-Watanabe}, there is pathwise nonuniqueness---then $X$ is not uniquely determined by $W$: the conditional measure $\solm(dX \cond W)$ is not a Dirac mass and so $X$ isn't deterministic conditionally on $W$. Thus there is ``additional randomness'' beyond that supplied purely by the driving noise. The rather subtle aspect of this problem is how one is prevented from constructing nonunique weak solutions using the pathwise nonuniqueness. Since $\solm(dX \cond W)$ is supported on solutions to the integral equation---and, with a nontrivial $W$ probability, is supported on multiple such solutions---it seems one could simply take $\solm^1(dX \cond W)$ to be a probability measure on ``half'' of the support of $\solm(dX \cond W)$ and $\solm^2(dX \cond W)$ to be a probability measure on the ``other half''. In this way, we would get $\solm^1(dX, dW) = \solm^1(dX \cond W) \W(dW)$ and  $\solm^2(dX, dW) = \solm^2(dX \cond W) \W(dW)$ which are mutually singular measures which straightforwardly satisfy Item~\ref{item:marginalizes correct}---since the $W$ marginal law is $\W$---and Item~\ref{item:solves equation}---since the measures are built out of solutions to the integral equation---of Definition~\ref{def:weak}. All one has to do is make this selection of $\solm^1(dX \cond W)$ and $\solm^2(dX \cond W)$ in an appropriately adapted way and Item~\ref{item:non-anticipatory} would be satisfied as well, thus constructing distinct weak solutions and proving weak nonuniqueness.

The above argument therefore proves that under weak uniqueness and pathwise nonuniqueness, there is no appropriately adapted way of splitting the conditional measure $\solm(dX \cond W)$ into mutually singular components: the informational structure of the problem prevents any such decomposition. We believe that the additive driving noise setting considered here dramatically simplifies this rather subtle phenomenon of weak uniqueness but pathwise nonuniqueness, in contrast to examples where the pathwise nonuniqueness is induced by a non-smooth multiplicative noise coefficient as in the Tanaka example~\cite{watanabe_stochastic_2000}.\footnote{We note that there is also the additive noise example of Tsirelson~\cite{Tsirelson_example-1975}, however this uses a path-dependent drift. Since a path dependent drift can detect e.g.\ the entire past path of the Brownian motion, we believe that this phenomenon is of a rather different character.} In the setting of multiplicative noise, it seems on first reading that some of this strange behavior---in particular the nonexistence of an appropriately adapted decomposition of the conditional measures $\solm(dX \cond W)$---could be an artifact of the sensitive nature of stochastic integrals. However, in this additive driving noise setting, the property of being a solution to the underlying ODE is completely transparent and essentially classical.

As such, it becomes clear that weak uniqueness in the presence of pathwise nonuniqueness is a kind of very subtle \textit{uniqueness by measurable selection.} The closest well-understood problem which bears some analogies is that of ODE uniqueness in the DiPerna--Lions theory~\cite{diperna_ordinary_1989}. Specializing the discussion for the sake of simplicity, (extensions of) the DiPerna--Lions theory
proves that for divergence-free velocity fields $u \in C^0_t W^{1,1}_x$, there exists a unique \textit{regular Lagrangian flow}, which is an almost everywhere version of the ODE flow map obeying a certain ``compressibility bound''~\cite{ambrosio_transport_2004}. On the other hand, it is \textit{not} the case that---under the same hypotheses---there necessarily is uniqueness of the ODE for almost every initial condition~\cite{bruePositiveSolutionsTransport2021}. This situation is highly analogous to our own: we have a unique weak solution but don't have pathwise uniqueness, while in the DiPerna--Lions theory there is a unique regular Lagrangian flow but not almost everywhere uniqueness of the ODE. The analogy extends further, as unique existence of the regular Lagrangian flow was (originally) proved using PDE techniques and is strongly related to unique existence of bounded solutions to the associated transport equation for bounded data---while weak uniqueness for negative regularity flows is also proved with PDE techniques and is related to the well-posedness of the advection-diffusion equation~\cite{flandoli_multidimensional_2017}.

That said, the analogy between regular Lagrangian flows in DiPerna--Lions theory and weak solutions in SDE theory stops here. This is because for regular Lagrangian flows, while some measurability in the initial data is required, the constraint that actually is forcing the uniqueness of the flow is the compressibility constraint.\footnote{This is more-or-less clear from the fact that we can assume that all subsets of $\R^d$ are Lebesgue measurable while maintaining sufficiently much choice to leave the analysis unharmed by~\cite{solovay_model_1970}. More explicitly, one should be able to prove the existence of distinct measurable flows (that fail to obey the compressibility bound) under the positive measure ODE nonuniqueness statement of~\cite{bruePositiveSolutionsTransport2021}; some descriptive set theory may however be necessary to ensure the desired measurability.} For weak solutions, there is no constraint analogous to the compressibility bound; the only condition is that of a suitable measurability, as stated in Lemma~\ref{lem:disintegration}. However, due to the temporal structure, this measurability condition is much richer and concretely meaningful than a measurability condition on $\R^d$.\footnote{We note that a comparison has also been made between path-by-path uniqueness vs.\ pathwise uniqueness and almost everywhere ODE uniqueness vs.\ regular Lagrangian flow uniqueness in~\cite[p.\ 12]{flandoli_random_2011}. We believe that the comparison of (pathwise or path-by-path uniqueness) vs.\ weak uniqueness and almost everywhere ODE uniqueness vs.\ regular Lagrangian flow uniqueness is more apt. For an example that distinguishes pathwise from path-by-path uniqueness, see~\cite{shaposhnikov_pathwise_2022}.}  

Finally, we re-emphasize that all velocity fields considered in this work are divergence-free. Despite being divergence-free, these velocity fields are showing the sharpness of the pathwise uniqueness regime for generic velocity fields (with no constraint on the divergence, only constraining the regularity). This is in contrast to the weak well-posedness theory---as discussed in Section~\ref{s:previous results}---for which there are better results available under a divergence-free hypothesis than without any control on the divergence. This is sensible, since what we are concerned with is essentially an ODE nonuniqueness phenomenon---as Theorem~\ref{thm:main-general} and the heuristic above make clear---while a constraint on the divergence largely controls more ``global'' or ``coupled in initial data'' objects, such as the compression of the flow map or the change of $L^p$ norms of solutions to transport equations. Since the phenomenon we study is happening locally, for a fixed initial data, it is not too surprising that being divergence-free does not substantially alter the behavior.

\section{Construction of the velocity fields}

\label{sec:definition of fields}

In this section we construct the velocity fields $u^\rho$ and $v^\alpha$ appearing in Theorem~\ref{thm:main-general}. These are constructed out of a single shear flow at each time as this substantially simplifies the technical proof of the ``single-scale estimate'' appearing in Section~\ref{sec:one scale}, which formalizes a single step of the heuristic presented in Section~\ref{sec:heuristic-comp}.

We thus first introduce the family of random alternating shear flows that form the basis of $u^\rho$ and $v^\alpha$. Fix $\phi : \R \to [0,\infty)$ such that $\phi \in C_c^\infty((0,1))$, $\int \phi(t)\,dt =1$, and $|\phi|\leq 2$. Let $A^n$ be a sequence of iid uniform random variables on $[1/2,2]$ and $B^n$ a sequence of iid uniform random variables on $[0,2\pi].$ We then define the random velocity fields $V^i:[0,\infty)\times \R^2\rightarrow \R^2$ by
\begin{align}
    V^1(t,x)& := \sum_{j=0}^\infty \phi(t-j) \sin(A^j x_2+ B^j) e_1,\notag\\
    V^2(t,x)& := \sum_{j=0}^\infty\phi(t-j) \sin(A^j x_1+ B^j) e_2.
    \label{eq:V-def}
\end{align}
We then let $V^{n,1}, V^{n,2}$ denote independent sequences of random variables with laws equal to those of $V^1,V^2$ respectively. We thus have that $V^{n,i}$ shears in the direction $e_i$. The specific choice of these velocity fields in terms of the laws of $A^j$ and $B^j$ is made to simplify the proof of Proposition~\ref{prop:one scale} as much as possible, but is somewhat arbitrary.

Finally, to alternate between the shear flows, we define the alternating sequence 
\[\iota^n:=1+\tfrac{1}{2}(1+(-1)^n).\]
Thus $(\iota^1,\iota^2,\iota^3,\iota^4,\dotsc)=(1,2,1,2,\dotsc)$.

\subsection{\texorpdfstring{Construction of $u^\rho$ velocity fields}{Construction of velocity fields for Brownian driving noises}}\label{subsec:u-rho_const}

We first define the velocity fields $ u^\rho \in L^\infty_t C^{-\rho}_x \cap L^1_t C^0_x$ used in Theorem~\ref{thm:main-general}.

\begin{definition}\label{def:u-rho}
For $\rho\in(0,1/2)$ let $T^n$ be the sequence of times such that $\lim_{n\rightarrow\infty} T^n=0$ and
\[T^{n-1} - T^n = 2^{-(2+\rho/2)n} \lceil  2^{(2 - \frac{3}{4} \rho)n}\rceil.\]
Then we define the random vector field $u^\rho:[0,T^0]\times\R^2\rightarrow \R^2$ by
\[u^\rho(t,x) := \sum_{n=1}^\infty \indc_{t \in [T^n, T^{n-1})} 2^{\rho n} V^{n,\iota^n}(2^{(2+\rho/2)n} (t-T^n), 2^n x).\]
\end{definition}

Using the definition of the generalized H\"older norms with the explicit periodicity and scaling of the velocity fields defining $u^\rho$, we get the following regularity, which is proved by direct computation.

\begin{lemma}
    For all $\rho>0, k \in \N$, it surely holds that 
    \[u^\rho \in L^\infty([0,T^0], C^{-\rho}(\R^2)) \cap L^1([0,T^0],C^0(\R^2)) \cap W^{k,\infty}([0,T^0], C^{-\rho - (2+\frac{\rho}{2})k}(\R^2))\cap C^\infty_{\mathrm{loc}}((0,T^0]\times\R^2).\]
\end{lemma}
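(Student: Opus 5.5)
The plan is to verify each of the four memberships separately. Everything reduces to explicit bounds on a single rescaled block
\[u_n(t,x):=2^{\rho n} V^{n,\iota^n}(2^{(2+\rho/2)n}(t-T^n),2^n x),\qquad t\in[T^n,T^{n-1}),\]
which are then summed over $n$ or bounded uniformly in $n$. At each fixed time only one $n$ and at most one $j$ (since $\phi\in C^\infty_c((0,1))$) are active. So $u^\rho(t,\cdot)$ is either zero or of the form
\[c\,2^{\rho n}\phi(s-j)\sin(2^nA x_k+B)e_i,\qquad k\ne i,\quad A\in[1/2,2],\quad |\phi|\le 2.\]

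\textbf{$L^\infty_t C^{-\rho}_x$.} Write the shear as a divergence. If $f=a\sin(\lambda x_2+B)$ with $\lambda=2^nA$, then $g=-(a/\lambda)\cos(\lambda x_2+B)\,e_2$ satisfies $\nabla\cdot g = f$. Because $f$ is a function of $x_2$ only, the vector $f e_1$ is obtained componentwise from the row-wise divergence convention. I will fix the convention that $\nabla\cdot$ acts on matrix-valued $g$ row by row, so that $g_{12}=\int f\,dx_2$ works.

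Then $\|g\|_{C^{1-\rho}}$ is controlled by $\|g\|_{L^\infty}^{\rho}\|\nabla g\|_{L^\infty}^{1-\rho}$ plus the $L^\infty$ part. If the norm is homogeneous, the seminorm is $\lesssim (a/\lambda)\lambda^{1-\rho}=a\lambda^{-\rho}$. With $a\le 2\cdot 2^{\rho n}$ and $\lambda\ge 2^{n-1}$ this is $\lesssim 2^{\rho}\cdot 2$, which is uniform in $n$.

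This is the step I expect to be the main obstacle. The issue is getting the stated constant 2 (the theorem claims $\le 2$) and handling the inhomogeneous $L^\infty$ part of $\|g\|$, which is $a/\lambda\le 4\cdot 2^{(\rho-1)n}$ and hence harmless. If the sharp constant does not come out directly, I will absorb it by adjusting the normalization of $\phi$ or of the amplitudes. For the lemma as stated only finiteness is needed, so a uniform-in-$n$ bound suffices.

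\textbf{$L^1_t C^0_x$.} Here $\|u_n(t)\|_{C^0}\le 2\cdot 2^{\rho n}$ on an interval of length $T^{n-1}-T^n\approx 2^{-(2+\rho/2)n}2^{(2-3\rho/4)n}=2^{-5\rho n/4}$. So the $n$-th contribution is $\lesssim 2^{\rho n-5\rho n/4}=2^{-\rho n/4}$, which is summable.

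\textbf{$W^{k,\infty}_tC^{-\rho-(2+\rho/2)k}_x$.} Each time derivative produces a factor $2^{(2+\rho/2)n}$ via $\phi^{(k)}$, and $\phi^{(k)}$ is bounded. Since the spatial frequency is $\approx 2^n$, taking the divergence preimage $k(2+\rho/2)$ extra times gains $\lambda^{-(2+\rho/2)k}\approx 2^{-(2+\rho/2)kn}$, which exactly cancels. This is the same computation as in the first step, iterated. The time derivatives exist classically because $\phi$ vanishes near the endpoints of each unit interval and hence near each $T^n$.

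\textbf{$C^\infty_{\mathrm{loc}}((0,T^0]\times\R^2)$.} A compact subset of $(0,T^0]\times\R^2$ meets only finitely many intervals $[T^n,T^{n-1})$, and on each of these the field is a finite sum of smooth compactly-in-time-supported terms. The point $T^0$ itself is also fine by the vanishing of $\phi$ near integers.
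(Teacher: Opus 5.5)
Your proposal is correct and follows the paper. The only estimate the paper actually writes out is the $L^1_tC^0_x$ bound, via exactly your computation $\sum_n 2^{\rho n}\|\phi\|_{L^\infty}(T^{n-1}-T^n)\lesssim\sum_n 2^{-\rho n/4}$. The remaining memberships are left to ``direct inspection,'' which your explicit divergence preimage $g_{12}=-(a/\lambda)\cos(\lambda x_2+B)$ and frequency-scaling count make precise. As you note, the constant $2$ plays no role here. Your approximation $\lceil 2^{(2-3\rho/4)n}\rceil\approx 2^{(2-3\rho/4)n}$ relies on $\rho$ lying in the range $(0,1/2)$ where $u^\rho$ is actually defined.
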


\begin{proof}
    We only prove that $u^\rho \in L^1_{t} C^0_x$ as it is the only estimate that does not follow by direct inspection. We then compute
    \[\|u^\rho\|_{L^1_tC^0_x} \leq \sum_{n=1}^\infty 2^{-(2+\rho/2)n} \lceil  2^{(2 - \frac{3}{4} \rho)n}\rceil 2^{\rho n}\|\phi\|_{L^\infty} \leq 2\|\phi\|_{L^\infty} \sum_{n=1}^\infty 2^{- \rho n/4} <\infty,\]
    allowing us to conclude.
\end{proof}

\subsection{\texorpdfstring{Construction of $v^\alpha$ velocity fields}{Construction of velocity fields for alpha-Holder driving noises}}\label{subseq:fBm_velocity}

Next, we define the velocity fields $v^\alpha \in L^\infty_t C^\alpha_x$ used in Theorem~\ref{thm:main-general}.

\begin{definition}\label{def:v-alpha}
For $\alpha\in\R$, let $T^n$ be the sequence of times such that $\lim_{n\rightarrow \infty}T^n=0$ and
\[T^{n-1} - T^n := 2^{-( 2 - 2\alpha - 2n^{-1/2})n} \lceil 2^{( 2 - 2\alpha - 3n^{-1/2}) n}\rceil.\]
We then define the random vector field $v^\alpha:[0,T^0]\times\R^2\rightarrow \R^2$ by
\[v^\alpha(t,x) := \sum_{n=1}^\infty \indc_{t \in [T^n, T^{n-1})} 2^{-\alpha n} V^{n,\iota^n}(2^{(2 - 2\alpha - 2 n^{-1/2}) n} (t-T^n), 2^n x).\]
\end{definition}

Again, by direct computation, we find $v^\alpha$ lives in the following regularity spaces. As this follows by direct inspection, we omit the argument.

\begin{lemma}
    For all $\alpha\in\R, k\in\N$, it surely holds that 
    \[v^\alpha \in L^\infty([0,T^0], C^{\alpha}(\R^2)) \cap W^{k,\infty}([0,T^0], C^{\alpha - (2-2\alpha)k}(\R^2)) \cap C^\infty_{\mathrm{loc}}((0,T^0]\times\R^2).\]
\end{lemma}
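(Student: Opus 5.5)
The plan is to reduce everything to a scaling computation for a single time slab $[T^n,T^{n-1})$, and then glue the slabs together. Write $s_n := 2-2\alpha-2n^{-1/2}$ and $\lambda := 2^n A^j \in [2^{n-1},2^{n+1}]$. Since $\phi\in C^\infty_c((0,1))$, at each rescaled time $r = 2^{s_n n}(t-T^n)$ at most one index $j$ contributes to $V^{n,\iota^n}$. So on slab $n$ the field is
\[
v^\alpha(t,x)=2^{-\alpha n}\,\phi(r-j)\,\sin(\lambda x_{i'}+B^j)\,e_{\iota^n},
\]
where $x_{i'}$ is the coordinate transverse to the shear direction. The basic fact I will use is the scaling estimate
\[
\|\sin(\lambda\,\cdot+B)\|_{C^\gamma(\R^2)}\le C_\gamma\lambda^\gamma \quad\text{for all }\gamma\in\R,
\]
with $C_\gamma$ independent of $B$ and of $\lambda\ge 1$. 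I will establish this by cases, following Definition~\ref{def:generalized Holder spaces}.
\begin{itemize}
\item For $\gamma\in[0,1)$ it is the usual interpolation between the sup bound and the Lipschitz bound $\lambda$. (For $\gamma\ge 1$ one uses the standard higher H\"older norms, and the same scaling holds.)
\item For $\gamma<0$, choose $m\ge 1$ with $\gamma+m\in[0,1)$. Take $m$ successive antiderivatives in the variable $x_{i'}$. Each antiderivative is again a trigonometric function carrying an extra factor $\lambda^{-1}$, and it realizes one divergence step: put the antiderivative in the $i'$-th slot of the vector or matrix field, so that componentwise $\nabla\cdot g = f$.
\item This gives $\|f\|_{C^\gamma}\le \lambda^{-m}\,C\lambda^{\gamma+m}=C\lambda^\gamma$.
\end{itemize}
Consequently the spatial regularity part is immediate: on slab $n$,
\[
\|v^\alpha(t,\cdot)\|_{C^\alpha}\le 2^{-\alpha n}\,|\phi|\,C\lambda^\alpha \le C',
\]
uniformly in $n$ and $t$. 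This uses $2^{-\alpha n}\lambda^{\alpha}\le 2^{|\alpha|}$.

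For the time derivatives, on the open set where $v^\alpha$ is smooth we have
\[
\partial_t^k v^\alpha=2^{-\alpha n}\,2^{s_n n k}\,\phi^{(k)}(r-j)\,\sin(\lambda x_{i'}+B^j)\,e_{\iota^n}.
\]
Hence, by the scaling fact,
\[
\|\partial_t^k v^\alpha(t)\|_{C^{\alpha-(2-2\alpha)k}}\le C\,\|\phi^{(k)}\|_\infty\, 2^{-\alpha n}\,2^{(2-2\alpha)nk}\,\lambda^{\alpha-(2-2\alpha)k}\le C_k .
\]
Here I use $s_n\le 2-2\alpha$, and the powers of $2^n$ exactly cancel. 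This cancellation is precisely why the time exponent was chosen as it was.

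It remains to glue. The rescaled length of slab $n$ is $2^{s_n n}(T^{n-1}-T^n)=\lceil 2^{(2-2\alpha-3n^{-1/2})n}\rceil$, which is an integer $N_n$. So slab $n$ consists of exactly $N_n$ bumps $\phi(\cdot-j)$, $0\le j<N_n$, each compactly supported inside its unit cell. Therefore $v^\alpha$ vanishes identically in a neighborhood of every slab endpoint $T^n$, and also near every inner bump boundary. This gives $C^\infty_{\mathrm{loc}}((0,T^0]\times\R^2)$: any compact subset of $(0,T^0]$ meets only finitely many slabs, and on each slab the field is a finite sum of smooth functions.

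For $W^{k,\infty}$ on the closed interval $[0,T^0]$, there is one further point. The slabs accumulate at $t=0$, so I must check that the distributional time derivatives carry no singular part at $t=0$. I will argue inductively in $k$ as follows.
\begin{itemize}
\item Set $X_k:=C^{\alpha-(2-2\alpha)k}$. The function $\partial_t^{k-1}v^\alpha$ is smooth on $(0,T^0]$ as an $X_{k-1}$-valued map, and its derivative is bounded in $X_k$, with $X_{k-1}\hookrightarrow X_k$ via the bound $\|f\|_{C^{\gamma-1}}\lesssim\|f\|_{C^\gamma}$.
\item Hence $\partial_t^{k-1}v^\alpha$ is Lipschitz into $X_k$ on $(0,T^0]$, so it extends continuously to $t=0$.
\item A Lipschitz function has its classical derivative as its weak derivative, so a single point cannot carry mass.
\end{itemize}
I expect this gluing step at $t=0$ to be the only mildly delicate point; the rest is bookkeeping of the scaling exponents.
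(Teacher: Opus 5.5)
Your proposal is correct in substance and is the direct scaling computation that the paper omits (``follows by direct inspection''). It uses one active shear per bump, the bound $\|\sin(\lambda\,\cdot+B)\|_{C^\gamma}\lesssim\lambda^\gamma$ for $\lambda\ge 1$, and the cancellation $2^{-\alpha n}2^{s_n nk}\lambda^{\alpha-(2-2\alpha)k}\lesssim 2^{-2kn^{1/2}}$.

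\textbf{The embedding claim is false in general.} The bound $\|f\|_{C^{\gamma-1}}\lesssim\|f\|_{C^\gamma}$ does not hold for the paper's norms, because the negative norms are defined through divergence representations and do not control low frequencies. For example, $f\equiv 1$ has $\|f\|_{C^0}=1$ but $\|f\|_{C^{-1}}=\infty$, since no bounded $g$ satisfies $\nabla\cdot g=1$ on $\R^2$. For your profiles the inequality does hold, but only because $\lambda\ge 2^{n-1}\ge 1$. You also need the lower-order derivatives in the target space for $W^{k,\infty}$. Bound each $\partial_t^j v^\alpha$, $j\le k$, directly in $C^{\alpha-(2-2\alpha)k}$ with your scaling fact; this gives $\lesssim 2^{-(2-2\alpha)(k-j)n}$ and avoids any embedding.

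\textbf{The ``gluing at $t=0$'' step is unnecessary.} Weak derivatives on $(0,T^0)$ are tested only against $\psi\in C^\infty_c((0,T^0))$. The support of such $\psi$ meets finitely many slabs, and $v^\alpha$ is smooth on each of them, so nothing can concentrate at $t=0$.

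Throughout you tacitly need $\alpha<1$. For $\alpha\ge1$ one has $\sum_n (T^{n-1}-T^n)=\infty$, so $T^0$ is not finite; the paper's ``for all $\alpha\in\R$'' is loose on this point.
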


\section{Single scale separation estimate} \label{sec:one scale}

In this section we prove the ``single-scale estimate" which formalizes a single step of the heuristic presented in Section~\ref{sec:heuristic-comp}. Namely, a quantitative estimate on the particle separation generated from the independent hits from advection by our random velocity fields. It is here that we use that our velocity fields are constructed from shear flows. This substantially simplifies the argument since, for a shear pointing in the horizontal direction, two particles solving~\eqref{eq:main} with the same driving noise will not have their vertical separation change. Thus, since the change of their horizontal separation will only depend (in law) on their vertical separation (as well as on the driving noise), this will make it so the hits to the horizontal separation are truly independent. These hits will also be one dimensional, which is the simplest setting for the Berry--Esseen theorem.

One additional modification needed to make the argument presented in Section~\ref{sec:heuristic-comp} rigorous is that we need to keep track of the size of separations \textit{and} the probability they occur. That is, we cannot ensure with probability one that the separation is growing on each step in the desired way. Instead we prove that the separation grows in the desired way ``with high probability'', which in this setting means that the probability of failure is summable as $n \to \infty$. This summability will then imply the asymptotically almost sure statement given in Theorem~\ref{thm:main-general}.

First, we state the classical Berry--Esseen Theorem; see~\cite[Theorem 2.2.4]{stroock_probability_2024} for a modern treatment.
\begin{theorem}[Berry--Esseen Theorem]
\label{thm:Berry-esseen}
    Let $D_j$ be a sequence of mutually independent, $\R$-valued random variables with $\E [D_j] =0$ and let $Z$ be a standard normal random variable on $\R$. Let
    \[\bar S_n := \Big(\sum_{j=1}^n \E [D_j^2]\Big)^{-1/2}\sum_{j=1}^n D_j.\]    
    Then for all $n \in \N$ and $x \in \R$, 
    \[\big|\P(\bar S_n \leq x) - \P(Z \leq x)\big| \leq 10 \frac{\sum_{j=1}^n \E[|D_j|^3]}{\big(\sum_{j=1}^n \E[D_j^2]\big)^{3/2}}.\]
\end{theorem}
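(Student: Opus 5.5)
This is the classical Berry--Esseen theorem, and the paper takes it from~\cite[Theorem 2.2.4]{stroock_probability_2024}. If I were to prove it, I would use the standard Fourier-analytic route: Esseen's smoothing inequality, followed by a comparison of characteristic functions in the Lyapunov regime.

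\textbf{Reductions.} First I dispose of trivial cases and normalize.
\begin{itemize}
\item If some $\E|D_j|^3=\infty$, the right-hand side is infinite and there is nothing to prove.
\item Set $\sigma_j^2:=\E[D_j^2]$, $s_n^2:=\sum_j\sigma_j^2$, and $L_n:=s_n^{-3}\sum_j\E|D_j|^3$.
\item By Lyapunov's inequality $\sigma_j^3\le \E|D_j|^3$, so $\max_j \sigma_j/s_n \le L_n^{1/3}$.
\item Since probabilities differ by at most $1$, the bound is trivial when $10L_n\ge 1$. Hence I may assume $L_n\le 1/10$.
\item Rescale $D_j\mapsto D_j/s_n$, so that $s_n=1$.
\end{itemize}
Let $F$ be the distribution function of $\bar S_n$ and $\Phi$ the standard normal distribution function. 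Esseen's smoothing lemma states that for every $T>0$,
\[
\sup_x|F(x)-\Phi(x)|\le \frac1\pi\int_{-T}^{T}\frac{|f(t)-e^{-t^2/2}|}{|t|}\,dt+\frac{24}{\pi\sqrt{2\pi}\,T},
\qquad f(t):=\prod_j \E e^{itD_j}.
\]
It is proved by convolving $F-\Phi$ with a Fej\'er-type kernel whose Fourier transform is supported in $[-T,T]$, and using that $\Phi'$ is bounded by $(2\pi)^{-1/2}$. I will take $T=c/L_n$ for a constant $c$ to be fixed later.

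\textbf{Comparing characteristic functions.} Let $f_j(t):=\E e^{itD_j}$. Taylor expansion with third-order remainder gives
\[
\big|f_j(t)-1+\tfrac12\sigma_j^2t^2\big|\le \tfrac16|t|^3\E|D_j|^3.
\]
I treat two ranges of $t$ separately.
\begin{itemize}
\item \emph{Small $|t|$, namely $|t|\le L_n^{-1/3}$.} Here every $f_j(t)$ is close to $1$, since $\sigma_j^2t^2\le L_n^{2/3}t^2\le 1$. I take logarithms and write $\log f_j(t)=-\tfrac12\sigma_j^2t^2+\theta_j$ with $|\theta_j|\lesssim |t|^3\E|D_j|^3$. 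Summing over $j$ yields
\[
|f(t)-e^{-t^2/2}|\lesssim L_n|t|^3e^{-t^2/4}.
\]
\item \emph{Larger $|t|$, namely $L_n^{-1/3}\le |t|\le c/L_n$.} Here I symmetrize: $|f_j(t)|^2$ is the characteristic function of $D_j-D_j'$, where $D_j'$ is an independent copy. The same Taylor bound gives $|f_j|^2\le \exp(-\sigma_j^2t^2+\tfrac43|t|^3\E|D_j|^3)$. Multiplying over $j$ gives $|f(t)|\le e^{-t^2/4}$ once $c\le 3/16$. Consequently $|f(t)-e^{-t^2/2}|\le 2e^{-t^2/4}$, and this is also $\lesssim L_n|t|^3e^{-t^2/4}$ because $|t|\ge L_n^{-1/3}$.
\end{itemize}
In both ranges the integrand satisfies $|f(t)-e^{-t^2/2}|/|t|\lesssim L_n t^2e^{-t^2/4}$. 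Integrating over $[-T,T]$ gives a contribution $O(L_n)$. The boundary term $24/(\pi\sqrt{2\pi}\,T)$ is also $O(L_n)$ by the choice $T=c/L_n$. Together these give $\sup_x|F(x)-\Phi(x)|\le C L_n$.

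\textbf{Main obstacle.} The only delicate point is bookkeeping the numerical constants so that the final $C$ is at most $10$. This means optimizing $c$, tracking the constant in the logarithm expansion of $f_j$ near $1$, and tracking the constant in the smoothing lemma. Since the sharp constant is known to be below $1$, the value $10$ leaves ample room. A crude but careful choice, for instance $c=1/8$, should therefore suffice.
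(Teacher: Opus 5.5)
The paper gives no proof of this theorem; it only cites Stroock. Your Fourier outline (Esseen's smoothing inequality plus a two-range comparison of characteristic functions) is correct as far as it goes: it proves $\sup_x|F(x)-\Phi(x)|\le CL_n$ for some absolute $C$. The gap is in the step you treat as routine, namely getting $C\le 10$. With the parameters you propose, that step fails.

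With $T=c/L_n$, the smoothing remainder alone is
\[
\frac{24}{\pi\sqrt{2\pi}\,T}=\frac{24}{\pi\sqrt{2\pi}}\cdot\frac{L_n}{c}\approx 3.05\,\frac{L_n}{c}.
\]
This is about $24.4\,L_n$ for $c=1/8$, and at least about $16.3\,L_n$ for every $c\le 3/16$, which is the range you impose. Your integral bound $|f(t)-e^{-t^2/2}|/|t|\le 2L_nt^2e^{-t^2/4}$, integrated over $\R$, adds $\frac{8}{\sqrt\pi}L_n\approx 4.5\,L_n$, so you end up near $29\,L_n$. The remark that the sharp constant is below $1$ does not help. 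Those constants come from far more refined arguments and say nothing about what this crude version yields.

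The repair stays inside your scheme, but there is little room to spare.
\begin{itemize}
\item Your symmetrized bound $|f(t)|^2\le\exp(-t^2+\frac43L_n|t|^3)$ gives Gaussian decay for every $c<3/4$. It already gives $|f|\le e^{-t^2/4}$ for $c\le 3/8$, not only for $c\le 3/16$.
\item Take $c=0.45$. Then $|f(t)|\le e^{-t^2/5}$ on $L_n^{-1/3}\le|t|\le T$, and the smoothing term is about $6.8\,L_n$.
\item On that large range, use $|f-e^{-t^2/2}|\le(|f|+e^{-t^2/2})\,L_n|t|^3$ rather than $2e^{-at^2}L_n|t|^3$. Integrate only over $|t|\ge L_n^{-1/3}\ge 10^{1/3}$, so that only Gaussian tails enter. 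This contributes about $2.1\,L_n$.
\item On the small range, $|\log(1+z)-z|\le|z|^2$ for $|z|\le\frac12$ gives $|\Theta|\le\frac{5}{12}L_n|t|^3$. Together with $e^{-t^2/4}$, this range contributes at most $\frac{5}{3\sqrt\pi}L_n\approx0.94\,L_n$.
\end{itemize}
The total is about $9.8\,L_n$, which proves the statement. Alternatively, you can follow the more careful bookkeeping of the same argument in Feller (Vol.~II, \S XVI.5), which gives a constant below $10$.

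For the paper's purposes any absolute constant would do, since the $10$ only enters through the $640$ in Proposition~\ref{prop:one scale}. The statement as written, however, fixes the constant at $10$.
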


We now state and prove the ``single-scale estimate." This quantifies the probability that under shearing by $V^1$, particles that are initially separated in the vertical direction separate in the horizontal direction for a fixed driving noise $W$. This implies the analogous result for $V^2$ after rotation. Throughout the rest of the section, given some element $x\in\R^2$, we let $x=(x_1,x_2)$ denote the coordinates.

\begin{proposition}
\label{prop:one scale}
    Fix $W \in C^0([0,\infty),\R^2)$ and let $u=V^1$ defined by~\eqref{eq:V-def}. For $N\in\N$, additionally suppose that 
    \begin{equation}\label{eq:regularity_cond}
    \sup_{n<N} \sup_{s,t \in [0,1]} |W_{s+n,2} - W_{t+n,2}| \leq \tfrac{1}{16}.
    \end{equation}
    Then for all $y^1,y^2\in\R^2$ such that $|y^1_2-y^2_2| \geq 4$, the unique solutions $X_t^1, X_t^2$ to~\eqref{eq:main} with $u = V^1$ and $(X_0^1,X_0^2)=(y^1,y^2)$ have the property that
    \[\volm(|X_{N,1}^1 - X_{N,1}^2| \leq L) \leq (2L + 640)N^{-1/2},\quad \forall L>0.\]
\end{proposition}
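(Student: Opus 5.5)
The plan is to use the shear structure to reduce the horizontal separation to a sum of independent, mean-zero one-dimensional ``hits'', and then to apply the Berry--Esseen theorem (Theorem~\ref{thm:Berry-esseen}). Since $u=V^1$ only has an $e_1$ component, the vertical coordinates evolve purely by the noise:
\[
X^i_{t,2}=y^i_2+W_{t,2}-W_{0,2}.
\]
These are deterministic, because $W$ is fixed, and they do not depend on the velocity field. In particular the vertical separation $h:=y^1_2-y^2_2$ is constant in time, with $|h|\geq 4$. Integrating the first coordinate gives
\[
X^1_{N,1}-X^2_{N,1}=(y^1_1-y^2_1)+\sum_{j=0}^{N-1} R_j,
\qquad
R_j:=\int_j^{j+1}\phi(t-j)\big[\sin(A^jX^1_{t,2}+B^j)-\sin(A^jX^2_{t,2}+B^j)\big]\,dt .
\]
Each $R_j$ is a fixed deterministic function of $(A^j,B^j)$ alone, so the $R_j$ are mutually independent. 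Moreover $\E[R_j\cond A^j]=0$, because $B^j$ is uniform over a full period of the sine.

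Next I would obtain moment bounds through the product-to-sum formula. Set $m_t:=\tfrac12(X^1_{t,2}+X^2_{t,2})$. Then
\[
R_j=2\sin\!\big(\tfrac{A^jh}{2}\big)\int_j^{j+1}\phi(t-j)\cos(A^j m_t+B^j)\,dt .
\]
By~\eqref{eq:regularity_cond}, $m_t$ varies by at most $1/16$ on $[j,j+1]$, and $A^j\leq 2$. Using $\int\phi=1$, this gives
\[
R_j=2\sin\!\big(\tfrac{A^jh}{2}\big)\big(\cos(A^j m_j+B^j)+e_j\big),\qquad |e_j|\leq \tfrac18 .
\]
Consequently $|R_j|\leq 3$, and so $\E|R_j|^3\leq 3\,\E R_j^2$. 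This is a convenient choice: Berry--Esseen then yields an error of order $(\sum_j\E R_j^2)^{-1/2}$, which needs only a lower bound on the variance.

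The main obstacle is to show that the variance $\sigma^2:=\E R_j^2$ is bounded below by an absolute constant, uniformly over $W$ and $h$. I would split this into the two independent sources of randomness.
\begin{itemize}
\item Conditioning on $A^j$, the average over $B^j$ gives $\E_B(\cos(\theta+B)+e)^2\geq \big(\E_B\cos^2(\theta+B)\big)^{1/2}-\tfrac18$, squared, via Minkowski. This is at least $(2^{-1/2}-1/8)^2\geq 1/3$.
\item For the factor $\E\sin^2(A^jh/2)$, note that $A^j|h|/2$ is uniform on an interval of length $\tfrac34|h|\geq 3$. The average of $\sin^2$ over any interval of length $\ell\geq 3$ is at least $\tfrac12-\tfrac{1}{2\ell}\geq \tfrac13$.
\end{itemize}
Together these give $\sigma^2\geq 4\cdot\tfrac13\cdot\tfrac13\geq\tfrac49$. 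The subtle point I expect to check carefully is that the error $e_j$ depends on $B^j$, so the Minkowski step must be done conditionally on $A^j$ before averaging over $A^j$.

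Finally, the event $|X^1_{N,1}-X^2_{N,1}|\leq L$ says that $\bar S_N$ lies in an interval of length $2L/(\sigma\sqrt N)$. Applying Theorem~\ref{thm:Berry-esseen} at both endpoints, the probability is at most
\[
\frac{2L}{\sigma\sqrt{2\pi N}}+2\cdot 10\cdot\frac{3N\sigma^2}{(N\sigma^2)^{3/2}}=\frac{2L}{\sigma\sqrt{2\pi N}}+\frac{60}{\sigma\sqrt N}.
\]
With $\sigma\geq 2/3$ this is bounded by $(2L+640)N^{-1/2}$, as claimed.
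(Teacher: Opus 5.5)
Your proposal is correct and follows the paper's route: independent mean-zero hits coming from the shear structure, a uniform variance lower bound, a third-moment bound, and Berry--Esseen at both endpoints combined with the Gaussian density bound. The only difference is the variance bound. You factor $R_j$ via product-to-sum and handle the $A^j$- and $B^j$-averages separately, getting $\E R_j^2\ge 4/9$, whereas the paper expands the square, averages out $B^n$ exactly, and bounds the two resulting cosine integrals, getting $\E[D_n^2]\ge 1/2$. One cosmetic fix: $\E R_j^2$ depends on $j$ through $W$, so write $\Sigma_N=\sum_{j<N}\E R_j^2\ge 4N/9$ in place of $N\sigma^2$; your constants are unchanged.
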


\begin{proof}
    For simplicity, let $R_t :=X_{t,1}^1 - X_{t,1}^2$ denote the horizontal separation between $X^1_t$ and $X^2_t$, and $a=X_{0,2}^1,b=X_{0,2}^2$ their initial vertical coordinates. We will suppose without loss of generality that $W_0=0$.

    By the definition of $V^1$, we thus have that in law
    \begin{align*}
        X_t^1 &= y^1 + W_t + \int_0^t \sum_{j=0}^\infty \phi(s - j) \sin(A^j (a+W_{s,2})+ B^j) e_1\,ds,\\
         X_t^2 &= y^2 + W_t + \int_0^t \sum_{j=0}^\infty \phi(s-j) \sin(A^j (b+W_{s,2})+ B^j) e_1 \,ds,
    \end{align*}
    where we have used that $V^1$ has no $e_2$ component. We thus find that
    \[D_n:=R_{n+1}-R_n=\int_n^{n+1} \phi(s-n) \big(\sin(A^n (a+W_{s,2})+ B^n) - \sin(A^n (b+W_{s,2})+ B^n)\big)\,ds,\]
    where the last equality is in law.

    Since the $A^n$ and $B^n$ are independent across $n$, so are the increments $D_n$, although they are not necessarily identically distributed due to the $W_{s,2}$ term. Additionally, since $B^n$ is uniform $[0,2\pi]$ and $\sin(x)$ is mean-zero, $D_n$ has zero mean for all $n$ since
    \[\E [D_n] =\frac{1}{3\pi}\int_{1/2}^2\int_n^{n+1} \phi(s-n)\int_0^{2\pi} \big(\sin(y (a+  W_{s,2})+z) - \sin(y (b+W_{s,2})+ z)\big)\,dzdsdy= 0.\]
    We next want to derive uniform lower bounds on $\E [D_n^2]$ and uniform upper bounds on $\E [|D_n|^3]$ with the goal of applying the Berry--Esseen theorem, Theorem~\ref{thm:Berry-esseen} to $\sum_{n=0}^{N-1} D_n$.

    To this end, for $n$ fixed let $w_t := W_{t+n,2} - W_{n,2}$. We then note that, using the periodicity of $\sin$ and the distributions of $A^n$ and $B^n$
    \begin{align*}\E [D_n^2] &=  \frac{1}{3\pi}\int_0^{2\pi}\int_{1/2}^2\bigg(\int_0^1\phi(s)(\sin(y(a+W_{s,2})+z)-\sin(y(b+W_{s,2})+z)\,ds\bigg)^2\,dy\,dz
    \\&=  \frac{1}{3\pi}\int_0^{2\pi}\int_{1/2}^2\bigg(\int_0^1\phi(s)(\sin(y(a-b+w_s)+z)-\sin(yw_s+z)\,ds\bigg)^2\,dy\,dz.
    \end{align*}
    Expanding out the square and integrating we thus find that
    \begin{align*}
    \E [D_n^2] &= \frac{1}{3\pi}\int_0^{1} \phi(r) \int_0^{1} \phi(s)\int_{1/2}^2 \int_0^{2\pi}   \sin(y(a-b+w_s)+ z)\sin(y(a-b+w_r)+ z) 
    \\&\qquad\qquad\qquad + \sin(yw_s+z) \sin(yw_r+z)- 2\sin(y(a-b+w_s)+ z) \sin(y w_r+z)\,dzdydsdr
    \\&=\frac{2}{3} \int_0^{1} \phi(r) \int_0^{1}  \phi(s)\int_{1/2}^2  \cos(y(w_s - w_r)) -  \cos(y(a-b+w_s-w_r))\, dydsdr.
    \end{align*}
    Since $\cos(x)\geq 1-\frac{1}{2}|x|^2$ for all $x$ and $y\leq 2$,
    \[\frac{2}{3} \int_0^{1} \phi(r) \int_0^{1}  \phi(s)\int_{1/2}^2  \cos(y(w_s - w_r))\,dydsdr\geq 1-2\sup_{s,t\in[0,1]}|w_s-w_r|^2.\]
    On the other hand, integrating again
    \begin{align*}\frac{2}{3} \int_0^{1} \phi(r) \int_0^{1}  &\phi(s)\int_{1/2}^2 \cos(y(a-b + w_s-w_r))\, dydsdr
    \\&\qquad=-\frac{2}{3}\int_0^1\int_0^1\bigg[\frac{1}{a-b+w_s-w_r}\sin(y(a-b+w_s-w_r))\bigg]^{y=2}_{y=1/2}\phi(r)\phi(s)\,drds
    \\&\qquad\leq\frac{4}{3}\big(|a-b|-\sup_{s,r\in[0,1]}|w_s-w_r|\big)^{-1}.
    \end{align*}
    Combining the above three math displays, we have thus found that
    \[\mathbb{E}[D_n^2]\geq 1-2\sup_{s,r\in[0,1]}|w_s-w_r|^2-\frac{4}{3}\big(|a-b|-\sup_{s,r\in[0,1]}|w_s-w_r|\big)^{-1}\geq \frac{1}{2},\]
    where in the last inequality we use the fluctuation assumption~\eqref{eq:regularity_cond} and the lower bound on $|a-b|$. On the other hand, we easily see that $\E [|D_n|^3]\leq 8$.

    We are now ready to apply Theorem~\ref{thm:Berry-esseen}. Let $r_0 = X_{0,1}^1 - X_{0,1}^2$, $\Sigma_N := \sum_{n=0}^{N-1} \E[D_n^2]$, and $\bar S_N := \Sigma_N^{-1/2} \sum_{n=0}^{N-1} D_n$ so that $R_N=r_0+\Sigma_N^{1/2}\bar S_N$. Then we note that
    \begin{align*}
    \volm(|X_{N,1}^1 - X_{N,1}^2| \leq L)& = \volm(R_N \in [-L, L]) 
    \\&= \volm\Big(\bar S_N \in [\Sigma_N^{-1/2}(-L - r_0), \Sigma_N^{-1/2}(L-r_0)]\Big)
    \\&= \volm\big(\bar S_N \leq  \Sigma_N^{-1/2}(L-r_0)\big) - \volm\big(\bar S_N \leq \Sigma_N^{-1/2}(-L - r_0)\big).
    \end{align*}
    Next we apply Theorem~\ref{thm:Berry-esseen}, noting that by our above computations
    \[\frac{\sum_{n=0}^{N-1} \E[|D_n|^3]}{\big(\sum_{n=0}^{N-1} \E[D_n^2]\big)^{3/2}} \leq \frac{8N}{(2^{-1} N)^{3/2}} \leq 32 N^{-1/2},\]
    giving together that, for a standard normal random variable $Z$ on $\R$,
    \begin{align*}\volm(|X_{N,1}^1 - X_{N,1}^2| \leq L) &\leq \P\big(Z \in  [\Sigma_N^{-1/2}(-L - r_0), \Sigma_N^{-1/2}(L-r_0)]\big) + 640 N^{-1/2} 
    \\&\leq 2(2\pi)^{-1/2} \Sigma_N^{-1/2} L + 640 N^{-1/2},
    \end{align*}
    where for the second inequality we use the $L^\infty$ bound on the density of the standard normal. Then we note that $\Sigma_N \geq N/2$, so in total we find
    \[\volm(|X_{N,1}^1 - X_{N,1}^2| \leq L) \leq  (2(2\pi)^{-1/2} 2^{1/2} L + 640 )N^{-1/2} \leq (2L + 640)N^{-1/2},\]
    as claimed.
\end{proof}

\section{Multiscale iteration}\label{sec:multiscale}

In this section we prove Theorem~\ref{thm:main-general} by establishing quantitative separation estimates for $u^\rho$ and $v^\alpha$. These are given in Propositions~\ref{prop:brownian separation} and~\ref{prop:separation_estimate}, from which the theorem follows directly.

The starting point is to rescale Proposition~\ref{prop:one scale} in time and space so that it applies on each interval $[T^n,T^{n-1}]$ defining $u^\rho$ and $v^\alpha$. This yields the one-step separation estimates in Corollaries~\ref{cor:rescaled for brownian} and~\ref{cor:rescaled for general}. It is at this stage that we use the regularity of the driving noise as it ensures that, after rescaling to sufficiently small time intervals, condition~\eqref{eq:regularity_cond} is satisfied.

These one-step estimates are then iterated across scales, yielding Propositions~\ref{prop:brownian separation} and~\ref{prop:separation_estimate}. Concretely, if $m>n$ are sufficiently large and the particles are separated at time $T^m$ at the correct scale and in the correct direction, then with high probability this separation propagates to time $T^n$ at the corresponding scale. The mechanism is as follows. Suppose $\iota^m=1$, so that the velocity field acts horizontally on $[T^m,T^{m-1}]$, and the particles have vertical separation at least $2^{-m+2}$ at time $T^m$. The one-step estimate implies that, with high probability, their horizontal separation at time $T^{m-1}$ is at least $2^{-m+3}$. Conditioning and iterating this argument while alternating between vertical and horizontal directions yields separation at time $T^n$ of size at least $2^{-n+2}$ with high probability. The case $\iota^m=2$ then follows by symmetry.

\subsection{Rescaled single scale estimates}

We begin with the rescaled one-step separation estimates. For this purpose, recall that, as defined in Definition~\ref{def:sol}, $X_t^{s,y,u,W}=(X^{s,y,u,W}_{t,1},X^{s,y,u,W}_{t,2})$ denotes the classical solution to an ODE with initial condition $y$ at time $s$, advecting velocity field $u\in C^\infty_{\mathrm{loc}}((0,T]\times\R^2)$, and driving noise $W$.

We first state the estimate for $u^\rho$. Note that $n$ must be sufficiently large relative to the $C^{\frac{1+\rho/8}{2+\rho/2}}$ norm of the driving noise.

\begin{corollary}\label{cor:rescaled for brownian}
    Let $\rho \in (0,1/2)$, $T^j$ and $u^\rho$ as given by Definition~\ref{def:u-rho}, and $W\in C^{\frac{1+\rho/8}{2+\rho/2}}([0,T^0],\R^2)$.
    Then for all $y^1,y^2\in\R^2$ and $n\in\N$ such that 
    \[|y^1_{\iota^{n+1}}-y^2_{\iota^{n+1}}|\geq 2^{-n+2},\] 
    and
    \[n \geq 32 \rho^{-1} + 8 \rho^{-1} \log_2\big(\|W\|_{C^{\frac{1+\rho/8}{2+\rho/2}}([0,T^0],\R^2)}\big).\]
    it holds that
    \[\volm\big(|X^{T^n,y^1,u^\rho,W}_{T^{n-1},\iota^{n}} - X^{T^n,y^2,u^\rho,W}_{T^{n-1},\iota^n}| \leq 2^{-n+3}\big) \leq 64 \cdot 2^{-\rho n/8}.\]
\end{corollary}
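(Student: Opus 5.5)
Throughout, write $N_n := \lceil 2^{(2-\frac34\rho)n}\rceil$ and $\lambda_n := 2^{(2+\rho/2)n}$. The plan is to reduce the statement to Proposition~\ref{prop:one scale} by the natural parabolic-type rescaling of the interval $[T^n,T^{n-1})$. On this interval $u^\rho(t,x) = 2^{\rho n} V^{n,\iota^n}(\lambda_n(t-T^n), 2^n x)$. Given a solution $X_t = X^{T^n,y,u^\rho,W}_t$, we define the rescaled path
\[
Y_s := 2^n X_{T^n + \lambda_n^{-1}s}, \qquad s\in[0,N_n].
\]
A direct computation gives
\[
dY_s = 2^{n}\lambda_n^{-1}2^{\rho n} V^{n,\iota^n}(s,Y_s)\,ds + d\tilde W_s,
\]
with $\tilde W_s := 2^n W_{T^n+\lambda_n^{-1}s}$. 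The prefactor is $2^{(1+\rho-2-\rho/2)n} = 2^{-(1-\rho/2)n}$, which is not $1$. So we also rescale $V$ in time, or more simply absorb the prefactor by noting that the proof of Proposition~\ref{prop:one scale} works verbatim with $V^1$ replaced by $c V^1$. In that case $D_j$ scales by $c$, the variance lower bound becomes $c^2/2$, the third moment bound becomes $8c^3$, and the Berry--Esseen ratio is unchanged. The resulting bound is
\[
\volm(|R_N|\le L)\le (2L/c+640)N^{-1/2}.
\]
I would state this scaled variant explicitly, or equivalently note that it follows by dividing the horizontal separation by $c$.

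Next we check the hypotheses. By symmetry (rotation swapping coordinates), we may assume $\iota^n=1$, so $\iota^{n+1}=2$ and the assumption gives vertical separation $|y^1_2-y^2_2|\ge 2^{-n+2}$, i.e.\ rescaled separation $\ge4$. For the fluctuation condition~\eqref{eq:regularity_cond}, we need
\[
|\tilde W_{s}-\tilde W_{t}|\le \tfrac1{16} \quad\text{for } |s-t|\le1.
\]
With $\beta = \frac{1+\rho/8}{2+\rho/2}$, this reads
\[
2^n\|W\|_{C^\beta}\lambda_n^{-\beta}=\|W\|_{C^\beta}2^{n-(1+\rho/8)n}=\|W\|_{C^\beta}2^{-\rho n/8}\le\tfrac1{16}.
\]
This follows from the lower bound on $n$: $n\ge 32\rho^{-1}+8\rho^{-1}\log_2\|W\|$ gives $2^{-\rho n/8}\|W\|\le 2^{-4}$.

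Finally we apply the scaled Proposition with $N=N_n$, $c=2^{-(1-\rho/2)n}$, and $L=8$, since the target separation $2^{-n+3}$ corresponds to $8$ in rescaled units. The bound is
\[
(16\cdot 2^{(1-\rho/2)n}+640)\,N_n^{-1/2}\le (16\cdot2^{(1-\rho/2)n}+640)\,2^{-(1-\frac38\rho)n}.
\]
The main term is $16\cdot 2^{-\rho n/8}$, and the remainder $640\cdot 2^{-(1-\frac38\rho)n}$ is at most $48\cdot 2^{-\rho n/8}$ once $n\ge 32/\rho$ (since $1-\frac38\rho>\rho/8$ and $n$ is large). Together these give $64\cdot 2^{-\rho n/8}$.

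The only delicate point is the bookkeeping of the prefactor $c$ through Berry--Esseen, i.e.\ confirming the scale invariance of the ratio. The final constant check $640\cdot 2^{-(1-\frac38\rho)n}\le 48\cdot2^{-\rho n/8}$ also needs care; if it is tight, a slightly cruder but still valid comparison using the large-$n$ hypothesis suffices.
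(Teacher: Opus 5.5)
Your proposal is correct and is essentially the paper's argument. The paper rescales the horizontal coordinate by $2^{(2-\rho/2)n}$ instead of $2^n$, so the prefactor is exactly $1$ and Proposition~\ref{prop:one scale} applies verbatim with $L=2^{3+(1-\rho/2)n}$; this is precisely your ``divide the horizontal separation by $c$'' variant, and the constants close the same way (the paper gets $16+16\le 64$ where you get $16+48$).

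One minor point, which the paper's proof shares: when $\|W\|<1$ the hypothesis does not force $n\ge 32/\rho$. This is harmless, because for $n\le 48/\rho$ the claimed bound $64\cdot 2^{-\rho n/8}$ is at least $1$ and holds trivially.
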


\begin{proof}
    We proceed by rescaling our solutions. Assume without loss of generality that $\iota^n=1$, and let
    \[\tilde X_t^i := \Big( 2^{(2-\rho/2)n} X^{T^n,y^i,u^\rho,W}_{T^n+2^{-(2+\rho/2)n}t,1}, 2^nX^{T^n,y^i,u^\rho,W}_{T^n+2^{-(2+\rho/2)n} t,2}\Big),\qquad i\in\{1,2\},\]
    and
    \[\tilde W_{t}:=\Big(2^{(2-\rho/2)n}W_{T^n+2^{-(2+\rho/2)n}t,1}, 2^n W_{T^n+2^{-(2+\rho/2)n} t,2}\Big).\]
    Then $\tilde{X}^i$ solves
    \[\begin{cases}
        d \tilde X_t^i =  V^1( t, \tilde X_t^i)dt + d\tilde W_t,
        \\\tilde{X}_0^i=(2^{(2-\rho/2)n}y^i_1,2^ny^i_2),
    \end{cases}\]
    and, letting $N:= \lceil 2^{( 2 - \frac{3}{4} \rho) n} \rceil$, we have $\tilde{X}^i_{N,1}=2^{(2-\rho/2)n}X^{T^n,y^i,u^\rho,W}_{T^{n-1},1}$. We also compute that
     \[ \sup_{k< N}\sup_{s,t \in [0,1]} |\tilde W_{s+k,2} - \tilde W_{t+k,2}| \leq  2^{-\rho n/8} \|W\|_{C^{\frac{1+\rho/8}{2+\rho/2}}} \leq \tfrac{1}{16},\]
     where we use our assumption on $n$ for the final inequality. 

    By our assumptions on $y^i$, $|\tilde X_{0,2}^1 - \tilde X_{0,2}^2| \geq 4$, so we are exactly in the setting of Proposition~\ref{prop:one scale}. Letting $\tilde\volm$ be the law of $V^1$, Proposition~\ref{prop:one scale} gives that
    \begin{align*}\volm\big(|X^{T^n,y^1,u^\rho,W}_{T^{n-1},1} - X^{T^n,y^2,u^\rho,W}_{T^{n-1},1}| \leq 2^{-n+3}\big)  &= \tilde\volm\big( |\tilde X_{N,1}^1 - \tilde X_{N,1}^2| \leq 2^{3+(1 - \rho/2)n}\big)
    \\&\leq ( 2^{4+(1- \rho/2)n}+ 640)N^{-1/2}.
        \\&\leq 64\cdot2^{-\rho n /8}
    \end{align*}
    where in the last inequality we have used that $2^{4+(1- \rho/2)n}\geq 640$ since $\rho\in(0,1/2)$ and $n\geq 32\rho^{-1}$. This concludes the claim.
\end{proof}

Next, we state and prove the rescaled estimate for $v^\alpha$. This follows almost verbatim as Corollary~\ref{cor:rescaled for brownian}.

\begin{corollary}
\label{cor:rescaled for general}
    Let $\alpha \in \R$ with $\alpha < 1/2$, $T^j$ and $v^\alpha$ as given by Definition~\ref{def:v-alpha}, $\frac{1}{2(1-\alpha)} < \beta \leq 1,$ and $W \in C^\beta([0,T^0],\R^2)$.
    Then for all $y^1,y^2\in\R^2$ and $n\in\N$ such that 
    \[|y^1_{\iota^{n+1}}-y^2_{\iota^{n+1}}|\geq 2^{-n+2},\]
    and
    \[n \geq \Big(\frac{4\beta}{2\beta(1-\alpha) - 1}\Big)^2 \lor \frac{8 + 2\log_2 \|W\|_{C^\beta([0,T^0],\R^2)} }{2\beta(1-\alpha)-1} \lor 64,\]
    it holds that
    \[\volm(|X^{T^n,y^1,v^\alpha,W}_{T^{n-1},\iota^n} - X^{T^n,y^2,v^\alpha,W}_{T^{n-1},\iota^n}| \leq 2^{-n+3}) \leq 64\cdot 2^{-n^{1/2}/2}.\]
\end{corollary}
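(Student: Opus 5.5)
The plan is to mirror the proof of Corollary~\ref{cor:rescaled for brownian}: rescale the interval $[T^n,T^{n-1}]$ to unit-length shear periods and invoke Proposition~\ref{prop:one scale}. By symmetry I assume $\iota^n=1$, so $\iota^{n+1}=2$. Set $\gamma_n := (2-2\alpha-2n^{-1/2})n$ and $N:=\lceil 2^{(2-2\alpha-3n^{-1/2})n}\rceil$, so that $T^{n-1}-T^n = 2^{-\gamma_n}N$. I will use the anisotropic rescaling
\[\tilde X^i_t := \Big(2^{\gamma_n+\alpha n - n}\, X^{T^n,y^i,v^\alpha,W}_{T^n+2^{-\gamma_n}t,1},\; 2^n X^{T^n,y^i,v^\alpha,W}_{T^n+2^{-\gamma_n}t,2}\Big),\]
with $\tilde W$ scaled in the same way. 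The second coordinate is scaled by $2^n$ so that $V^1(\cdot, 2^n x)$ becomes $V^1(\cdot,\tilde x)$. The first coordinate is scaled by $2^{\gamma_n+\alpha n-n}$ so that the amplitude $2^{-\alpha n}$ times the time factor $2^{-\gamma_n}$ is exactly normalized. This is legitimate because $V^1$ does not depend on $x_1$. Then $\tilde X^i$ solves~\eqref{eq:main} with $u=V^1$ and driving noise $\tilde W$, and $\tilde X^i_{N,1}$ is the rescaled horizontal position at $T^{n-1}$. The hypothesis on $y$ gives $|\tilde X^1_{0,2}-\tilde X^2_{0,2}|\ge 4$.

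Next I check~\eqref{eq:regularity_cond}. For $s,t$ in one unit block,
\[|\tilde W_{s+k,2}-\tilde W_{t+k,2}| \le 2^n\, 2^{-\beta\gamma_n}\|W\|_{C^\beta} = 2^{-n(2\beta(1-\alpha)-1) + 2\beta n^{1/2}}\|W\|_{C^\beta}.\]
The first lower bound on $n$, namely $n\ge (4\beta/(2\beta(1-\alpha)-1))^2$, gives $2\beta n^{1/2}\le \tfrac12 n(2\beta(1-\alpha)-1)$. The exponent is then at most $-\tfrac12 n(2\beta(1-\alpha)-1)$. The second lower bound on $n$ then makes the right-hand side at most $2^{-4}=\tfrac1{16}$.

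Applying Proposition~\ref{prop:one scale} with $L = 2^{3-n}\cdot 2^{\gamma_n+\alpha n-n}$ gives the failure probability
\[(2L+640)N^{-1/2} \lesssim 2^{4+(1-\alpha-2n^{-1/2})n}\, 2^{-(1-\alpha-\frac32 n^{-1/2})n} + 640\, N^{-1/2}.\]
The first term equals $16\cdot 2^{-n^{1/2}/2}$. The main obstacle is absorbing the constant $640$. Here I use $\alpha<1/2$: then $N^{-1/2}\le 2^{-(1-\alpha)n+\frac32 n^{1/2}}\le 2^{-n/2+\frac32 n^{1/2}}$, and $n\ge 64$ gives $n^{1/2}\ge 8$, so $\tfrac32 n^{1/2}\le \tfrac{3}{16}n$. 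Hence $640 N^{-1/2}$ is much smaller than $2^{-n^{1/2}/2}$ (for example, $640\le 2^{10}\le 2^{n/4}$ once $n\ge 40$). The total is therefore at most $64\cdot 2^{-n^{1/2}/2}$.
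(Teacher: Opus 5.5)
Your argument is the paper's proof: the same anisotropic rescaling onto Proposition~\ref{prop:one scale}, the same check of \eqref{eq:regularity_cond} (which you spell out in more detail than the paper does), and the same $L$. The only difference is cosmetic. The paper absorbs the constant by noting that $2L=2^{4+(1-\alpha-2n^{-1/2})n}\ge 640$ for $n\ge 64$, so $(2L+640)N^{-1/2}\le 4LN^{-1/2}$, whereas you bound $640N^{-1/2}$ separately; both work.

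There is one slip to fix. The horizontal scaling factor must be $2^{\gamma_n+\alpha n}=2^{(2-\alpha-2n^{-1/2})n}$, not $2^{\gamma_n+\alpha n-n}$. With your factor the rescaled drift is $2^{-n}V^1$, so Proposition~\ref{prop:one scale} would not apply. Correspondingly, $L=2^{3-n}\cdot 2^{\gamma_n+\alpha n}$. Your subsequent value of $2L$ is already the one for the correct factor, so the rest of the argument goes through unchanged.
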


\begin{proof}
    We again assume without loss of generality that $\iota^n=1$, but now let
    \[\tilde X_t^i=\Big(2^{(2 - \alpha - 2n^{-1/2}) n} X^{T^n,y^i,v^\alpha,W}_{T^n+2^{-(2 - 2\alpha - 2 n^{-1/2}) n}t,1}, 2^nX^{T^n,y^i,v^\alpha,W}_{T^n+2^{-(2 - 2\alpha - 2 n^{-1/2}) n}t,2}\Big),\qquad i\in\{1,2\},\]
    and
    \[\tilde W_t =\Big(2^{(2 - \alpha - 2n^{-1/2}) n}W_{T^n+2^{-(2-2\alpha - 2 n^{-1/2})n} t,1} ,2^n W_{T^n+2^{-(2-2\alpha - 2 n^{-1/2})n} t,2}\Big).\]
    Then $\tilde{X}^i$ solves
    \[\begin{cases}
        d \tilde X_t^i =  V^1( t, \tilde X_t^i)dt + d\tilde W_t,
        \\\tilde{X}_0^i=(2^{(2 - \alpha - 2n^{-1/2}) n}y^i_1,2^ny^i_2),
    \end{cases}\]
    and, letting $N=\lceil 2^{( 2 - 2\alpha - 3n^{-1/2}) n} \rceil$, we have $\tilde{X}^i_{N,1}=2^{(2 - \alpha - 2n^{-1/2}) n}X^{T^n,y^i,v^\alpha,W}_{T^{n-1},1}$. We also compute that
     \[ \sup_{k<N}\sup_{s,t \in [0,1]} |\tilde W_{s+k,2} - \tilde W_{t+k,2}| \leq 2^{(1-\beta (2-2\alpha -2n^{-1/2})) n} \|W\|_{C^\beta} \leq \tfrac{1}{16},\]
    where we use our assumption on $n$ for the final inequality.

    By our assumptions on $y^i$,  $|\tilde X_{0,2}^1 - \tilde X_{0,2}^2| \geq 4$, so we are again exactly in the setting of Proposition~\ref{prop:one scale}. Letting $\tilde{U}$ be the law of $V^1$, Proposition~\ref{prop:one scale} gives that
    \begin{align*}\volm(|X^{T^n,y^1,v^\alpha,W}_{T^{n-1},1} - X^{T^n,y^2,v^\alpha,W}_{T^{n-1},1}| \leq 2^{-n+3})  &= \tilde \volm\big( |\tilde X_{N,1}^1 - \tilde X_{N,1}^2| \leq 2^{3+(1- \alpha - 2n^{-1/2})n}\big) 
    \\&\leq ( 2^{4+(1- \alpha - 2n^{-1/2})n}+ 640)N^{-1/2}
    \\&\leq 64\cdot 2^{-n^{1/2}/2},
    \end{align*}
    where in the last inequality we have used that $2^{4+(1- \alpha - 2n^{-1/2})n}\geq 640$ since $n\geq 64$. This concludes the claim.
\end{proof}

\subsection{Multiscale estimates}

We can now iterate Corollary~\ref{cor:rescaled for brownian} and Corollary~\ref{cor:rescaled for general} to get the multi-scale quantitative separation estimates for $u^\rho$ and $v^\alpha$. We begin with the multi-scale estimate for $u^\rho$.

\begin{proposition}
    \label{prop:brownian separation}
     Let $\rho \in (0,1/2)$, $T^j$ and $u^\rho$ as given by Definition~\ref{def:u-rho}, and $W \in C^{\frac{1+\rho/8}{2+\rho/2}}([0,T^0],\R^2)$. Then for all $y^1,y^2 \in \R^2$ and $m,n\in\N$  such that
     \[|y^1_{\iota^{m+1}}-y^2_{\iota^{m+1}}| \geq 2^{-m+2},\]
     and
     \[ m\geq n\geq 32 \rho^{-1} + 8 \rho^{-1} \log_2\big(\|W\|_{C^{\frac{1+\rho/8}{2+\rho/2}}([0,T^0],\R^2)}\big),\]
     it holds that
     \[\volm\Big(|X^{T^m,y^1,u^\rho,W}_{T^n} - X^{T^m,y^2,u^\rho,W}_{T^n}| \leq 2^{-n+2}\Big) \leq  768 \rho^{-1} 2^{-\rho n/8}.\]
\end{proposition}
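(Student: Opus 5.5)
We iterate Corollary~\ref{cor:rescaled for brownian} scale by scale, from $m$ down to $n$, using the Markov (flow) property of the ODE together with the independence of the velocity field across the time intervals $[T^k,T^{k-1})$. For $k$ with $n\le k\le m$, let $Y^{i,k}:=X^{T^m,y^i,u^\rho,W}_{T^k}$, so that $Y^{i,m}=y^i$. We introduce the good events
\[G_k:=\big\{|Y^{1,k}_{\iota^{k+1}}-Y^{2,k}_{\iota^{k+1}}|\geq 2^{-k+2}\big\},\qquad n\le k\le m.\]
The hypothesis says that $G_m$ holds surely. The key observation is that $\iota^{k}=\iota^{(k-1)+1}$. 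Hence on the event $\{|Y^{1,k-1}_{\iota^k}-Y^{2,k-1}_{\iota^k}|>2^{-k+3}\}$ we have $|Y^{1,k-1}_{\iota^{k}}-Y^{2,k-1}_{\iota^{k}}|> 2^{-(k-1)+2}$, which is exactly $G_{k-1}$. Corollary~\ref{cor:rescaled for brownian} is precisely the statement that, starting from $G_k$ at time $T^k$, the event $G_{k-1}$ fails with probability at most $64\cdot 2^{-\rho k/8}$.

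To make this rigorous we condition. The flow on $[T^m,T^k]$ depends only on the fields $V^{j,\iota^j}$ with $j>k$. The flow on $[T^k,T^{k-1}]$ depends only on $V^{k,\iota^k}$, and these fields are independent of one another. So conditionally on $\mathcal F_k:=\sigma(V^{j,\iota^j}:j>k)$, the pair $(Y^{1,k},Y^{2,k})$ is deterministic, and the law of the flow on $[T^k,T^{k-1}]$ is unchanged. By uniqueness of classical solutions we have $Y^{i,k-1}=X^{T^k,Y^{i,k},u^\rho,W}_{T^{k-1}}$. We apply Corollary~\ref{cor:rescaled for brownian} with the frozen initial data $Y^{i,k}$ on $G_k\in\mathcal F_k$. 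This is allowed because $k\ge n$ satisfies the lower bound in the Corollary. It gives
\[\volm(G_{k-1}^c\cap G_k)\le 64\cdot 2^{-\rho k/8}.\]
A union bound then yields
\[\volm(G_{n-1}^c)\le \sum_{k=n}^m\volm(G_{k-1}^c\cap G_k)\le 64\sum_{k\ge n}2^{-\rho k/8}.\]

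On $G_{n-1}$ we have $|Y^{1,n-1}-Y^{2,n-1}|\ge 2^{-n+3}>2^{-n+2}$, but this is at time $T^{n-1}$, not $T^n$. We therefore stop one step earlier and use $G_n$ instead. On $G_n$, $|X_{T^n}^1-X_{T^n}^2|\ge 2^{-n+2}$ holds, and we need the complementary strict inequality. To get it, we run the union bound only down to $G_n$, so $\volm(G_n^c)\le 64\sum_{k\ge n+1}2^{-\rho k/8}$. We then handle the boundary (equality) case by replacing $2^{-k+2}$ with a slightly larger threshold. The cleaner route is to note that the Corollary actually yields separation strictly greater than $2^{-k+3}$, so that $G_{k-1}$ can be defined with a strict inequality throughout. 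In either case the event in the proposition is contained in $G_n^c$.

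Finally we sum the geometric series:
\[64\sum_{k\ge n}2^{-\rho k/8}=\frac{64\cdot 2^{-\rho n/8}}{1-2^{-\rho/8}}.\]
Using $1-2^{-x}\ge x\ln 2/(1+x)\ge x/12$ for $x=\rho/8\le 1/16$, the denominator is at least $\rho/96$, so the sum is at most $64\cdot 96\,\rho^{-1}2^{-\rho n/8}$. That constant is larger than the stated $768$; getting $768=64\cdot 12$ requires $1-2^{-\rho/8}\ge \rho/12$. That fails as stated, since $1-2^{-\rho/8}\approx 0.087\rho$, so the constant bookkeeping would need to be adjusted, for instance with a sharper count. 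This is routine.

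The main obstacle is the measurability and conditioning step: the Corollary must be applied with random initial data. This works because the ODE is classical (smooth field on $[T^m,T^0]$), the solution map is continuous in the initial data, and the blocks $V^{k,\iota^k}$ are independent.
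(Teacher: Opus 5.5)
Your argument is the same as the paper's. You use the scale-$k$ good events in the alternating coordinate, and the independence of the block $V^{k,\iota^k}$ from the fields on $[T^m,T^k]$ that determine $X_{T^k}$. This lets you apply Corollary~\ref{cor:rescaled for brownian} with frozen data at each level $k=n+1,\dots,m$, and you chain the levels with a union bound. Your handling of the boundary case is also right: the complement of the Corollary's bad event gives separation strictly larger than $2^{-k+3}$. This matches the paper's conditioning on separations strictly larger than $2^{-j+1}$.

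The only defect is your final paragraph, where the bookkeeping is wrong. You claim that $1-2^{-\rho/8}\ge\rho/12$ fails. But your own approximation $1-2^{-\rho/8}\approx 0.087\rho$ exceeds $\rho/12\approx 0.083\rho$. Indeed, with $y=\rho\ln 2/8\le 0.044$ one has $1-e^{-y}\ge y(1-y/2)\ge\rho/12$ on $(0,1/2)$. The factor-$8$ loss came from the crude step $x\ln 2/(1+x)\ge x/12$.

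It is simpler still to keep the index shift you had already established. The union bound runs over $k=n+1,\dots,m$, so
\[
64\sum_{k\ge n+1}2^{-\rho k/8}=\frac{64\cdot 2^{-\rho n/8}}{2^{\rho/8}-1}\le\frac{512}{\ln 2}\,\rho^{-1}2^{-\rho n/8}<768\,\rho^{-1}2^{-\rho n/8},
\]
using $e^{y}-1\ge y$; this is exactly the paper's count. As written, your proposal proves the proposition only with $6144$ in place of $768$. That is repaired by the one-line estimate above and needs no new idea.
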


\begin{proof}
    For simplicity, we drop the $u^\rho$ and $W$ in the superscripts of the solutions so that 
    \[X^{T^m,y^i}_{T^n}:=X^{T^m,y^i,u^\rho,W}_{T^n}.\]
    Iteratively conditioning, we then write that
    \begin{align}
        &\notag \volm\big(|X^{T^m,y^1}_{T^n} - X^{T^m,y^2}_{T^n}| \leq 2^{-n+2}\big) 
        \\&\qquad\qquad\notag \leq \volm\big(|X^{T^m,y^1}_{T^n,\iota^{n+1}} - X^{T^m,y^2}_{T^n,\iota^{n+1}}| \leq 2^{-n+2} \,\big|\, |X^{T^m,y^1}_{T^{n+1},\iota^{n+2}} - X^{T^m,y^2}_{T^{n+1},\iota^{n+2}}| > 2^{-n+1}\big)
        \\&\notag\qquad\qquad\qquad\qquad+ \volm\big(|X^{T^m,y^1}_{T^{n+1},\iota^{n+2}} - X^{T^m,y^2}_{T^{n+1},\iota^{n+2}}| \leq 2^{-n+1}\big)
        \\&\notag\qquad\qquad\leq \sum_{j=n}^{m-1} \volm\big(|X^{T^m,y^1}_{T^j,\iota^{j+1}} - X^{T^m,y^2}_{T^j,\iota^{j+1}}| \leq 2^{-j+2} \,\big|\, |X^{T^m,y^1}_{T^{j+1},\iota^{j+2}} - X^{T^m,y^2}_{T^{j+1},\iota^{j+2}}| > 2^{-j+1}\big)
        \\&\label{eq:iterated_condition}\qquad\qquad\qquad\qquad + \volm\big(|X^{T^m,y^1}_{T^{m},\iota^{m+1}} - X^{T^m,y^2}_{T^{m},\iota^{m+1}}| \leq 2^{-m+2}\big).
    \end{align}
    By our hypothesis on $m$, $\volm(|X^{T^m,y^1}_{T^{m},\iota^{m+1}} - X^{T^m,y^2}_{T^{m},\iota^{m+1}}| \leq 2^{-m+2}) = \volm(|y^1_{\iota^{m+1}}-y^2_{\iota^{m+1}}| < 2^{-m+2}) = 0$. On the other hand, we note that 
    \[X^{T^m,y^i}_{T^j}=X^{T^{j+1},X^{T^m,y^i}_{T^{j+1}}}_{T^j}.\]
    Since $u^\rho|_{[0,T^{j+1}]}$ is independent of $u^\rho|_{(T^{j+1},T^j]}$, Corollary~\ref{cor:rescaled for brownian} and our hypothesis on $n$ thus imply that for all $n\leq j\leq m-1,$
    \[\volm\big(|X^{T^m,y^1}_{T^j,\iota^{j+1}} - X^{T^m,y^2}_{T^j,\iota^{j+1}}| \leq 2^{-j+2}\,\big|\,|X^{T^m,y^1}_{T^{j+1},\iota^{j+2}} - X^{T^m,y^2}_{T^{j+1},\iota^{j+2}}| > 2^{-j+1}\big) \leq 64 \cdot 2^{- \rho(j+1)/8}.\]
    Inserting this into~\eqref{eq:iterated_condition} and bounding the sum, we get the claimed result.
\end{proof}

Next, we state the multi-scale estimate for $v^\alpha$. As the proof follows exactly as that of Proposition~\ref{prop:brownian separation} but with Corollary~\ref{cor:rescaled for general} in place of Corollary~\ref{cor:rescaled for brownian}, we omit the argument.

\begin{proposition}\label{prop:separation_estimate}
    Let $\alpha\in\R$ with $\alpha< 1/2$, $T^j$ and $v^\alpha$ as given by Definition~\ref{def:v-alpha}, $\frac{1}{2(1-\alpha)} < \beta \leq 1$, and $W \in C^\beta([0,T^0],\R^2)$. Then for all $y^1,y^2\in\R^2$ and $m,n\in\N$ such that 
    \[|y^1_{\iota^{m+1}}-y^2_{\iota^{m+1}}| \geq 2^{-m+2},\]
    and
     \[m\geq n\geq \Big(\frac{4\beta}{2\beta(1-\alpha) - 1}\Big)^2 \lor \frac{8 + 2\log_2 \|W\|_{C^\beta([0,T^0],\R^2)} }{2\beta(1-\alpha)-1} \lor 64,\]
     it holds that 
     \[\volm\Big(|X^{T^m,y^1,v^\alpha,W}_{T^n} - X^{T^m,y^2,v^\alpha,W}_{T^n}| \leq 2^{-n+2}\Big) \leq 512\cdot 2^{-\sqrt{n}/4}.\]
\end{proposition}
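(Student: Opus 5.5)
The argument follows the proof of Proposition~\ref{prop:brownian separation} essentially line by line, with Corollary~\ref{cor:rescaled for general} in place of Corollary~\ref{cor:rescaled for brownian}. Write $X^{T^m,y^i}_{T^j}$ for $X^{T^m,y^i,v^\alpha,W}_{T^j}$. The first step is the same telescoping conditioning as in~\eqref{eq:iterated_condition}. We bound the event $\{|X^{T^m,y^1}_{T^n}-X^{T^m,y^2}_{T^n}|\le 2^{-n+2}\}$ by the event that the $\iota^{n+1}$-coordinate separation at time $T^n$ is at most $2^{-n+2}$. We then split according to whether the $\iota^{n+2}$-coordinate separation at time $T^{n+1}$ exceeds $2^{-n+1}=2^{-(n+1)+2}$, and iterate down to time $T^m$. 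This gives a sum over $n\le j\le m-1$ of conditional probabilities, plus a terminal term. The terminal term vanishes by the hypothesis $|y^1_{\iota^{m+1}}-y^2_{\iota^{m+1}}|\ge 2^{-m+2}$.

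For each conditional term, we use the flow property $X^{T^m,y^i}_{T^j}=X^{T^{j+1},X^{T^m,y^i}_{T^{j+1}}}_{T^j}$. We also use that $v^\alpha|_{[T^{j+1},T^j)}$ is built from $V^{j+1,\iota^{j+1}}$, which is independent of $v^\alpha$ on $[T^m,T^{j+1})$. Conditioning on the field on the earlier interval freezes the positions at time $T^{j+1}$, which are then deterministic points satisfying the separation hypothesis of Corollary~\ref{cor:rescaled for general} at index $j+1$. The threshold condition on $j+1$ holds because $j+1>n$ and the lower bound on $n$ is monotone. The corollary bounds each term by $64\cdot 2^{-\sqrt{j+1}/2}$. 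One indexing point needs care. Corollary~\ref{cor:rescaled for general} at index $k$ controls the $\iota^k$-coordinate at $T^{k-1}$ given the $\iota^{k+1}$-coordinate at $T^k$. With $k=j+1$ this is the $\iota^{j+1}$-coordinate at $T^j$ given the $\iota^{j+2}$-coordinate at $T^{j+1}$, which matches the terms in the sum.

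The only genuinely new point, and the main obstacle, is summing the tail $\sum_{k>n}64\cdot 2^{-\sqrt{k}/2}$, since it is no longer geometric. I would compare it to an integral: $\sum_{k>n}2^{-\sqrt k/2}\le\int_n^\infty 2^{-\sqrt x/2}\,dx$. Substituting $x=s^2$ gives $\int_{\sqrt n}^\infty 2s\,2^{-s/2}\,ds$, which is of order $\sqrt n\,2^{-\sqrt n/2}$. The factor $\sqrt n\,2^{-\sqrt n/4}$ is bounded by an absolute constant for $n\ge 64$, so the integral is at most $C\,2^{-\sqrt n/4}$. Tracking the constants with $n\ge64$, so $\sqrt n\ge 8$, should give a total of at most $512\cdot 2^{-\sqrt n/4}$. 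If the constant comes out slightly too large, I would use the cruder bound $2^{-\sqrt k/2}\le 2^{-\sqrt n/4}\,2^{-\sqrt k/4}$ for $k\ge n$ and bound $\sum_{k\ge 64}2^{-\sqrt k/4}$ numerically.
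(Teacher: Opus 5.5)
Your argument matches the paper's, which omits this proof and says it is the proof of Proposition~\ref{prop:brownian separation} with Corollary~\ref{cor:rescaled for general} substituted. Your telescoping, the conditioning via the flow property and the independence of $V^{j+1,\iota^{j+1}}$ from the earlier fields, and the index matching $k=j+1$ are all correct. The step that does not work as planned is the one you flagged: the final constant.

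With the per-step bound $64\cdot 2^{-\sqrt{k}/2}$, the sum you are bounding is itself too large near $n=64$. As $m\to\infty$ it tends to $64\sum_{k\geq 65}2^{-\sqrt{k}/2}\geq 64\int_{65}^\infty 2^{-\sqrt{x}/2}\,dx\approx 247$, while $512\cdot 2^{-\sqrt{64}/4}=128$. Concretely, your integral gives $\int_{\sqrt n}^\infty 2s\,2^{-s/2}\,ds=g(\sqrt n)\,2^{-\sqrt n/4}$, where $g(a):=2^{-a/4}\big(\tfrac{4a}{\ln 2}+\tfrac{8}{(\ln 2)^2}\big)$. This $g$ is decreasing for $a\geq 3$ with $g(8)\approx 15.7$, so the best uniform constant you get is about $64\cdot 15.7\approx 1005$. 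Your fallback is worse: $\sum_{k\geq 65}2^{-\sqrt{k}/4}\approx 40$, which gives a constant near $2500$.

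Either of two one-line repairs closes this.
(i) For $n\leq 36^2=1296$ the right-hand side $512\cdot 2^{-\sqrt n/4}$ is at least $1$, so there is nothing to prove. For $\sqrt n>36$ your bound gives at most $64\,g(36)\,2^{-\sqrt n/4}\leq 30\cdot 2^{-\sqrt n/4}$.
(ii) The proof of Corollary~\ref{cor:rescaled for general} actually yields $32\cdot 2^{-\sqrt n/2}$ per step. Indeed $N\geq 2^{(2-2\alpha-3n^{-1/2})n}$ gives $2^{4+(1-\alpha-2n^{-1/2})n}N^{-1/2}\leq 2^{4-\sqrt n/2}$. With $32$ in place of $64$, the integral comparison gives $32\,g(8)\approx 503<512$.

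There is also a smaller slip, shared with the paper's proof of Proposition~\ref{prop:brownian separation}. If you split on the events that the separation \emph{exceeds} $2^{-j+1}$, the terminal term is $\volm(|y^1_{\iota^{m+1}}-y^2_{\iota^{m+1}}|\leq 2^{-m+2})$. That term equals $1$ when the hypothesis holds with equality. Split instead on $\{\geq 2^{-j+1}\}$ versus $\{<2^{-j+1}\}$, which the corollary allows since its hypothesis is non-strict. Then for $m>n$ the terminal event is empty. For $m=n$ with $|y^1-y^2|=2^{-n+2}$ the left side is $1$, so that degenerate case must be excluded from the statement. This is harmless for Theorem~\ref{thm:main-general}.
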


Finally, we note that Theorem~\ref{thm:main-general} follows immediately from Proposition~\ref{prop:brownian separation} and Proposition~\ref{prop:separation_estimate} by taking consecutive limits (importantly taking them in the correct order).

\section{Explosive separation to qualitative nonuniqueness}\label{sec:quantitative to qualitative}

In this section our goal is to prove Theorem~\ref{thm:general_non_path}, which allows us to take the explosive separation of Theorem~\ref{thm:main-general} to prove the nonuniqueness results of Theorem~\ref{thm:main-brownian}, Corollary~\ref{cor:caratheodory general}, and Corollary~\ref{cor:pathwise failure fBm}. It may be useful to recall the notational Remark~\ref{rem:measure notation}, as we will be working with the probability measures $\volm, \W, \solm,$ and $\coupm$ in this section.

The first lemma we want to prove is essentially a version of Fubini's theorem. We assume that $\W$-a.s.\ we have explosive separation, defined in Definition~\ref{def:explosive separation}, for the $\volm$ measure. We turn this statement about $\W$-a.s.\ limits of $\volm$ probabilities into a statement about $\volm$-a.s.\ separation in asymptotically high $\W$ probability. That is, we get a statement which is a $\volm$-a.s.\ limit of $\W$ probabilities. However, as these a.s.\ limits don't directly ``commute'' with the measures in this way, we are forced to replace the $\limsup$'s with $\liminf$'s. This constitutes the essential separation ingredient to the proof of nonuniqueness in Theorem~\ref{thm:general_non_path}.

\begin{lemma}\label{lem:fubini}
Suppose that for a driving noise with measure $\mathbb{W}\in \mathcal{P}\big(C^0([0,1],\R^d)\big)$, a random velocity field with measure $\volm \in \mathcal{P}\big(C^\infty_{\mathrm{loc}}((0,1] \times \R^d)\big)$, and a sequence of times $T^n\rightarrow 0^+$, $u$ is explosively separating for $T^n$ and $\mathbb{W}$-almost every $W$. Then for any $y\in\R^d$
\[\liminf_{n\rightarrow \infty}\liminf_{\delta\rightarrow 0} \liminf_{x\rightarrow y}\liminf_{m\rightarrow \infty}\W(|X^{T^m,x,u, W}_{T^n} - X^{T^m,y,u, W}_{T^n}|<\delta)=0,\qquad \volm\text{-a.e.\;}u.\]
\end{lemma}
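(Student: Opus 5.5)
The plan is to apply Fatou's lemma repeatedly to the nested limits, working under the product measure $\volm\otimes\W$, which is legitimate because $u$ and $W$ are independent. Fix $y$ and set
\[
F_{n,\delta,x,m}(u,W):=\indc\{|X^{T^m,x,u,W}_{T^n}-X^{T^m,y,u,W}_{T^n}|<\delta\}.
\]
This is jointly measurable in $(u,W)$: for $u\in C^\infty_{\mathrm{loc}}$ and $W$ continuous, the classical solution depends continuously on $(u,W)$ on $[T^m,T^n]$. By hypothesis, for $\W$-a.e.\ $W$,
\[
\lim_{n}\limsup_{\delta\to0}\limsup_{x\to y}\limsup_m\int F\,d\volm=0 .
\]

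\emph{Step 1: countable sequences.} The $x\to y$ and $\delta\to0$ limits are over continua, so Fatou cannot be applied directly. The $\delta$ limit causes no trouble, because $F$ is monotone in $\delta$, so every limit in $\delta$ may be taken along $\delta_k=2^{-k}$. The $x$ limit is the real issue. On the $u$-side the statement is a $\liminf$, so it suffices to exhibit one sequence $x_j\to y$, for each $n,\delta$, along which
\[
\liminf_j\liminf_m \W(F=1)=0
\]
for $\volm$-a.e.\ $u$. Any fixed sequence $x_j\to y$ (say $x_j=y+2^{-j}e_1$) satisfies $\limsup_j\le\limsup_{x\to y}$ on the $W$-side for every $W$. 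So along this fixed sequence, for $\W$-a.e.\ $W$,
\[
\lim_n\limsup_k\limsup_j\limsup_m \volm(F=1)=0 .
\]

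\emph{Step 2: integrate and apply Fatou.} Set
\[
G(u):=\liminf_n\liminf_k\liminf_j\liminf_m \W(F_{n,\delta_k,x_j,m}(u,\cdot)=1),
\]
which is a measurable $[0,1]$-valued function of $u$. Applying Fatou's lemma four times gives
\[
\int G\,d\volm\le \liminf_n\liminf_k\liminf_j\liminf_m \int\!\!\int F\,d\W\,d\volm .
\]
By Fubini, the inner integral equals $\int \volm(F=1)\,d\W$. We then need to pass the $\liminf$s inside the $W$-integral. Since the integrands are bounded by $1$, this is done with the reverse Fatou lemma:
\[
\liminf_m\int h_m\,d\W\le\limsup_m\int h_m\,d\W\le\int\limsup_m h_m\,d\W .
\]
Applying this successively over $m$, $j$, $k$ gives the bound
\[
\int G\,d\volm\le \liminf_n \int \limsup_k\limsup_j\limsup_m \volm(F=1)\,d\W(W),
\]
where the integrand tends to $0$ for $\W$-a.e.\ $W$ as $n\to\infty$, and the $\liminf_n$ is finally handled by dominated convergence. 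Hence $\int G\,d\volm=0$, so $G=0$ for $\volm$-a.e.\ $u$. Since the quantity in the lemma is bounded above by $G$ (a $\liminf$ over continua is at most the $\liminf$ along any sequence, and all quantities are nonnegative), this gives the claim.

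The main obstacle is the bookkeeping of the order of limits: Fatou turns $\liminf$ outside into $\liminf$ inside, and reverse Fatou turns $\limsup$ inside into $\limsup$ outside. The point to check is that the hypothesis supplies $\limsup$s on the $W$-side while the conclusion only needs $\liminf$s on the $u$-side, with the pointwise $n$-limit closed by dominated convergence. A secondary point is measurability of $G$, which follows because every quantity is now a countable $\liminf$ of measurable functions of $u$.
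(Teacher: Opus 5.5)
Your proof is correct and follows essentially the same route as the paper: Tonelli on $\volm\otimes\W$, then reverse Fatou on the $W$-side to get an annealed bound, then Fatou on the $u$-side to get the $\volm$-a.s.\ statement. The differences are minor. You apply Fatou directly to the $\W$-probabilities instead of going through the paper's Markov-inequality step with the indicators $\indc_{\{Z^{n,\delta,x,m}>\eps\}}$. You also explicitly reduce the $\delta\to0$ and $x\to y$ limits to countable sequences, which makes rigorous a measurability and Fatou point that the paper's proof leaves implicit.
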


\begin{proof}
Let $\P=\volm\otimes \mathbb{W}$ denote the joint law of $(u,W)$. Then we have that
\[\P(|X^{T^m,x,u, W}_{T^n} - X^{T^m,y,u, W}_{T^n}|<\delta)=\mathbb{E}_{\mathbb{W}}\big[\volm(|X^{T^m,x,u, W}_{T^n} - X^{T^m,y,u, W}_{T^n}|<\delta)\big],\]
where $\E_\W$ denotes integration over the $\W$ measure (similarly $\E_{\volm}$).
Since $u$ is explosively separating $\mathbb{W}$-a.s., the (reverse) Fatou lemma, gives that
\[ \lim_{n \to \infty} \limsup_{\delta \to 0} \limsup_{|x-y| \to 0} \limsup_{m \to \infty} \P(|X^{T^m,x,u, W}_{T^n} - X^{T^m,y,u, W}_{T^n}|<\delta)=0.\]
We then note that for any $\ep>0,$
\begin{align*}\P(|X^{T^m,x,u, W}_{T^n} - X^{T^m,y,u, W}_{T^n}|<\delta)&=\mathbb{E}_\volm\big[\mathbb{W}(|X^{T^m,x,u, W}_{T^n} - X^{T^m,y,u, W}_{T^n}|<\delta)\big]
\\&\geq  \eps\volm\big(\mathbb{W}(|X^{T^m,x,u, W}_{T^n} - X^{T^m,y,u, W}_{T^n}|<\delta)>\eps\big).
\end{align*}
Thus we have for all $\ep>0$,
\begin{equation}\label{eq:annealed}
\lim_{n \to \infty} \limsup_{\delta \to 0} \limsup_{|x-y| \to 0} \limsup_{m \to \infty} \volm\big(\W(|X^{T^m,x,u, W}_{T^n} - X^{T^m,y,u, W}_{T^n}|<\delta)>\ep\big) =0.
\end{equation}

Fixing $y\in\R^d$ and letting
\[Z^{n,\delta,x,m}:=\W(|X^{T^m,x,u, W}_{T^n} - X^{T^m,y,u, W}_{T^n}|<\delta),\]
equation~\eqref{eq:annealed} immediately implies that for all $\ep>0$,
\[\lim_{n\rightarrow \infty}\limsup_{\delta\rightarrow 0} \limsup_{x\rightarrow y}\limsup_{m\rightarrow \infty}\volm\big(Z^{n,\delta,x,m}>\eps\big)=0.\]
Fatou's lemma thus implies that
\begin{align*}
    0&= \lim_{n\rightarrow \infty}\limsup_{\delta\rightarrow 0} \limsup_{x\rightarrow y}\limsup_{m\rightarrow \infty}\mathbb{E}_{\volm}\Big [\indc_{\{Z^{n,\delta,x,m}>\eps\}}\Big]
    \\& \geq \mathbb{E}_{\volm}\Big[  \liminf_{n\rightarrow \infty}\liminf_{\delta\rightarrow 0} \liminf_{x\rightarrow y}\liminf_{m\rightarrow \infty}\indc_{\{Z^{n,\delta,x,m}>\eps\}}\Big]
    \\&\geq \mathbb{E}_{\volm}\Big[\indc_{\{ \liminf_{n\rightarrow \infty}\liminf_{\delta\rightarrow 0} \liminf_{x\rightarrow y}\liminf_{m\rightarrow \infty} Z^{n,\delta,x,m}>\eps\}}\Big]
    \\&= \volm\Big(\liminf_{n\rightarrow \infty}\liminf _{\delta\rightarrow 0} \liminf_{x\rightarrow y}\liminf_{m\rightarrow \infty} Z^{n,\delta,x,m}>\eps\Big).
\end{align*}
Unpacking the definition of $Z^{n,\delta,x,m}$, we conclude.
\end{proof}

We now state the lemma giving the compactness of weak solutions. We note here that we take $u$ to be a fixed, deterministic velocity field. It is in this lemma (and only this lemma) that we use regularity of the driving noise to preserve the independence condition Item~\ref{item:non-anticipatory} of Definition~\ref{def:weak} in the weak limit of measures.

\begin{lemma}\label{lemma:compactness} Let $u\in L^1([0,T],C^0(\R^d))$ be a deterministic velocity field, $\W\in \mathcal{P}(C^0([0,T],\R^d))$ a regular driving noise, $y^n,y\in\R^d$, and $\tau^n\geq 0$ such that $(y^n,\tau^n)\rightarrow (y,0)$ as $n\rightarrow \infty$. Further, for $n \in \N$, let $X^n$ be a weak solution to
\begin{equation}\label{eq:SDE_sequence}
\begin{cases}
    dX_t^n=u(t,X_t^n)\,dt+dW_t,\\
    X_{\tau^n}^n=y^n,
\end{cases}
\end{equation}
with driving noise $\W$. We naturally extend $X_t^n$ to a process on $C^0([0,T],\R^d)$ by letting $X_t^n=y^n$ for all $t\in[0,\tau^n]$.

Then there exists a weak solution to 
\begin{equation}\label{eq:SDE_limit}
\begin{cases}
    dX_t=u(t,X_t)\,dt+ dW_t\\
    X_0=y
\end{cases}
\end{equation}
with driving noise $\W$ such that, up to a subsequence, $(X^n,W)$ converges to $(X,W)$ weakly in law on $C^0([0,T],\R^d) \times C^0([0,T],\R^d)$.
\end{lemma}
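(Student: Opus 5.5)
The proof proceeds in three steps: tightness of the laws $\solm^n$ of $(X^n,W)$, identification of any subsequential limit as a solution of the integral equation, and preservation of the non-anticipation condition (Item~\ref{item:non-anticipatory} of Definition~\ref{def:weak}) in the limit.

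\textbf{Tightness.} The $W$-marginal is $\W$ for every $n$, so it is tight. For $X^n$, write $X^n_t = y^n + \int_{\tau^n\vee 0}^{t\vee\tau^n} u(s,X^n_s)\,ds + W_{t\vee\tau^n}-W_{\tau^n}$. The $W$ part is handled by tightness of $\W$ together with uniform continuity of $W$ and $\tau^n\to0$. The drift part is harder, since $u\in L^1_tC^0_x$ gives no uniform modulus by itself. The plan is to bound its modulus by $\int_s^t\|u(r)\|_{C^0}\,dr$, which is uniformly small for $|t-s|$ small by absolute continuity of the integral of the $L^1$ function $r\mapsto\|u(r)\|_{C^0}$. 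Hence the drift integrals form an equicontinuous family with bounded starting points. Arzel\`a--Ascoli then gives tightness, and Prokhorov yields a subsequence $\solm^n\to\solm$ weakly, with $W$-marginal $\W$.

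\textbf{The limit solves the equation.} Here I will use Skorokhod representation to obtain almost sure uniform convergence $(X^n,W^n)\to(X,W)$. The equation passes to the limit because $u(s,\cdot)$ is continuous for a.e. $s$, so $u(s,X^n_s)\to u(s,X_s)$, and dominated convergence applies with dominating function $\|u(s)\|_{C^0}$. The $\tau^n$ shift disappears since $\tau^n\to0$, and $y^n\to y$ gives $X_0=y$. This establishes Item~\ref{item:solves equation} for $\solm$.

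\textbf{Non-anticipation (main obstacle).} Conditional independence is not closed under weak limits in general, and this is exactly where regularity of $\W$ is needed. Fix $t$ and bounded continuous $F$ on $C^0([0,t])$ and $G$ on $C^0([0,T])$. Item~\ref{item:non-anticipatory} for $\solm^n$ is equivalent to
\[\E_{\solm^n}[F(\pi^tX)G(W)]=\E_{\solm^n}\Big[F(\pi^tX)\int G(\tilde W)\,\W(d\tilde W\cond \pi^tW)\Big].\]
There is a complication at times $t<\tau^n$: there $X^n$ is constant, hence trivially independent of $W$, so the identity still holds. By Definition~\ref{def:regular_driving_noise}, the map $\pi^tW\mapsto\int G\,d\W(\cdot\cond\pi^tW)$ is continuous and bounded. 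Both sides are therefore expectations of bounded continuous functionals of $(X,W)$, and they pass to the limit under weak convergence. The identity then holds for $\solm$, and a monotone class argument upgrades it to conditional independence of $\pi^tX$ and $W$ given $\pi^tW$. One remaining check is that the conditional kernel of $\W$ is unaffected by the coupling, i.e.\ that under $\solm$ the conditional law of $W$ given $\pi^tW$ is still $\W(\cdot\cond\pi^tW)$. This holds because the $W$-marginal of $\solm$ is exactly $\W$, so the identity characterizes conditional independence under $\solm$ as well.
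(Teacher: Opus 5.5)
Your proposal is correct and follows the paper's proof essentially step for step. You get tightness via Arzel\`a--Ascoli, using the uniform drift modulus $\int_s^t\|u(r)\|_{C^0}\,dr$ together with a compact set of large $\W$-measure. You pass to the limit in the integral equation by dominated convergence. You then transfer the integrated non-anticipation identity to the limit using the continuity of $\pi^tW\mapsto\int G\,d\W(\cdot\cond\pi^tW)$ supplied by regularity of $\W$. The only difference is cosmetic: you use Skorokhod representation to pass to the limit in the equation, whereas the paper approximates points in the support of the limit law by points in the supports of the $\solm^n$.
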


We defer the proof of the above lemma---as well as the following---to Appendix~\ref{appen:weak_sol_theory}. The final lemma we will need is a very abstract result allowing us to construct couplings that charge no mass to the diagonal. This is necessary for the proof of Item~\ref{item:weak unique as pathwise nonunique} of Theorem~\ref{thm:general_non_path}.

\begin{lemma}
\label{lem:coupling no diagonal}
    Let $\mathcal{X},\mathcal{Y}$ be Polish spaces, $\mathcal{Y}$ uncountable, and $x \mapsto \mu_x$ a (Borel) measurable map from $\mathcal{X} \to \mathcal{P}(\mathcal{Y}).$ Suppose that $\nu \in \mathcal{P}(\mathcal{X})$ and $\nu$-a.s., $\mu_x$ does not have an atom with mass greater than $1/2.$ Then there exists a (Borel) measurable map $x \mapsto \gamma_x$ from $\mathcal{X} \to \mathcal{P}(\mathcal{Y} \times \mathcal{Y})$ such that $\nu$-a.s., $\gamma_x(dy, \mathcal{Y}) = \gamma_x(\mathcal{Y}, dy) = \mu_x$ and $\gamma_x(\{(y,y) : y \in \mathcal{Y}\}) =0.$
\end{lemma}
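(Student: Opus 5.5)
The plan is to construct $\gamma_x$ explicitly from $\mu_x$ by a measurable ``rearrangement'' coupling. We first reduce to $\mathcal{Y} = [0,1]$. By the Borel isomorphism theorem, every uncountable Polish space is Borel isomorphic to $[0,1]$, and pushing forward by a Borel isomorphism $\Phi$ preserves both measurability of $x\mapsto\mu_x$ and the diagonal, since $\Phi$ is a bijection. So it suffices to treat $\mathcal{Y}=[0,1]$. There we use the quantile representation: let $F_x(s)=\mu_x([0,s])$ and let $Q_x(r)=\inf\{s: F_x(s)\ge r\}$ for $r\in(0,1)$. Then $(Q_x)_*\mathrm{Leb}_{(0,1)}=\mu_x$. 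Measurability of $x\mapsto \mu_x$ gives joint Borel measurability of $(x,s)\mapsto F_x(s)$, first for rational $s$ and then by right-continuity for all $s$. Hence $(x,r)\mapsto Q_x(r)$ is jointly Borel, because $\{Q_x(r)\le s\}=\{F_x(s)\ge r\}$.

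Next we define $\gamma_x := (Q_x, Q_x\circ\sigma)_*\mathrm{Leb}_{(0,1)}$, where $\sigma(r) = r+\tfrac12 \pmod 1$ is the half-rotation of the circle. Since $\sigma$ preserves Lebesgue measure, both marginals of $\gamma_x$ equal $\mu_x$. Measurability of $x\mapsto\gamma_x$ follows from joint measurability of $(x,r)\mapsto (Q_x(r),Q_x(\sigma(r)))$: for bounded Borel $f$, the map $x\mapsto\int f(Q_x(r),Q_x(\sigma r))\,dr$ is Borel by Fubini. On the $\nu$-null set where the atom hypothesis fails, we can define $\gamma_x$ the same way, since only the marginal property and measurability are needed there.

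The key step is showing $\gamma_x(\text{diagonal})=0$, i.e.\ $\mathrm{Leb}\{r: Q_x(r)=Q_x(r+\tfrac12 \bmod 1)\}=0$. Since $Q_x$ is nondecreasing, each level set $\{Q_x = s\}$ is an interval of length $\mu_x(\{s\})$. If $Q_x(r)=Q_x(r')$ with $r<\tfrac12$ and $r'=r+\tfrac12$, then by monotonicity $Q_x$ is constant on $[r,r+\tfrac12]$. That level set has length at least $\tfrac12$, so it is an atom of mass at least $\tfrac12$. Because atoms have mass strictly at most $\tfrac12$, this can happen only when the level set is exactly an interval of length $\tfrac12$ with $r$ at its left endpoint. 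That is at most a single value of $r$ per atom, and the atoms are countable, so the set of such $r$ is Lebesgue-null. The case $r\ge\tfrac12$ is symmetric with the roles of $r$ and $r'$ swapped.

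I expect the main obstacle to be this endpoint bookkeeping under the hypothesis as stated (no atom of mass \emph{greater} than $1/2$). Exactly-$\tfrac12$ atoms are allowed, so one must check carefully that they produce only measure-zero coincidences. The argument above shows this, but the open/closed endpoint conventions of the quantile level sets need care. The measurability of the Borel isomorphism reduction is routine.
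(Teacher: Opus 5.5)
Your proposal is correct and matches the paper's proof: both reduce to $[0,1]$ by Borel isomorphism, couple $\mu_x$ with itself through the quantile map composed with the half-rotation $r\mapsto r+\tfrac12 \bmod 1$, and use monotonicity of the quantile map to show that hitting the diagonal forces an atom of mass at least $\tfrac12$. The differences are minor. You finish by noting that only finitely many $r$ can land on the diagonal, whereas the paper takes two such points $z^1<z^2$ to produce an atom of mass strictly greater than $\tfrac12$. You also spell out the measurability of $x\mapsto\gamma_x$, which the paper simply calls clear.
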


With Lemmas~\ref{lem:fubini}-\ref{lem:coupling no diagonal} in hand, we are ready to prove Theorem~\ref{thm:general_non_path}.

\begin{proof}[Proof of Theorem~\ref{thm:general_non_path}]
    Fix $y\in\R^d$. By our hypothesis and Lemma~\ref{lem:fubini}, we have that 
    \[\liminf_{n\rightarrow \infty}\liminf_{\delta \rightarrow 0} \liminf_{x\rightarrow y}\liminf_{m\rightarrow \infty}\W(|X^{T^m,x,u, W}_{T^n} - X^{T^m,y,u, W}_{T^n}|<\delta)=0,\qquad \volm\text{-a.e.\;}u.\]
    Fix $u$ in the full probability set for which this holds.
    
    We first prove Item~\ref{item:high prob pathwise}. Fix $\ep,\gamma \in (0,1)$. Then, by the above display, we can find ($u,\gamma$-dependent) $T \in [0,\ep],\delta>0$, and sequences $\tau^\ell \in (0,T]$ and $y^\ell \in \R^d$, such that $\tau^\ell \to 0; y^\ell \to y;$ and
    \[\forall \ell \in \N,\, \W(|X^{\tau^\ell,y^\ell,u, W}_{T} - X^{\tau^\ell,y,u, W}_{T}|< \delta)\leq \gamma.\]
    Then let 
    \[X^{1,\ell}_t:=X^{\tau^\ell,y^\ell,u, W}_{t},\quad X^{2,\ell}_t:=X^{\tau^\ell,y,u, W}_{t},\quad V_t:=(W_t,W_t)^{\top},\quad\text{and}\quad  Y_t^\ell:=(X_t^{1,\ell},X^{2,\ell}_t)^\top.\]
    Letting $\mathbb{V}$ denote the law of $V$, we note that $\mathbb{V}(dV^1,dV^2)= \int_{C^0([0,1])} \delta_{W}(dV^1)\delta_{W}(dV^2)\mathbb{W}(dW)$. Since $\mathbb{W}$ is a regular driving noise, so is $\mathbb{V}$. Then for all $\ell$, the tuple $(Y^\ell,V)$ defines a weak solution---noting here that we are taking $u$ fixed and the weak solution is (only) with respect to $\mathbb{V}$---to the SDE
    \begin{equation}
    \begin{cases}
        dY_t^\ell=U(t,Y_t^\ell)\,dt+dV_t,\\
        Y_{\tau^\ell}^\ell=(y^\ell,y)^\top,
    \end{cases}
    \end{equation}
    where $U(t,x^1,x^2):=(u(t,x^1),u(t,x^2))^\top$. Under our hypotheses, $U$ and $\mathbb{V}$ satisfy the conditions of Lemma~\ref{lemma:compactness}, thus there exists a weak solution $(Y,V)=((X^1,X^2)^\top,(W,W)^{\top})$ to the SDE 
    \begin{equation}
    \label{eq:Y-sde}
    \begin{cases}
        dY_t=U(t,Y_t)\,dt+ dV_t,\\
        Y_0=(y,y)^\top,
    \end{cases}
    \end{equation}
    such that (up to taking a $u$-dependent subsequence) $(Y^\ell,V)$ converges weakly in law to $(Y,V)$. We note that by the definition of $U$ and $\mathbb{V}$, letting $\coupm$ be the law of $(X^1,X^2,W)$, then both $\coupm(C^0([0,1]),dX^2,dW)$ and $\coupm(dX^1, C^0([0,1]),dW)$ are weak solutions to the SDE~\eqref{eq:main}. Thus to show Item~\ref{item:high prob pathwise}, we just need to control $\coupm(X^1|_{[0,\ep]} = X^2|_{[0,\ep]}).$
    
    To that end, we note that by the weak convergence in law of $Y^{\ell}$ to $Y$,
    \[\coupm(X^1|_{[0,\ep]}=X^2|_{[0,\ep]})\leq \coupm(|X^1_{T}-X^2_{T}|< \delta) \leq    \liminf_{\ell \to \infty}\W(|X^{1,\ell}_{T}-X^{2,\ell}_{T}|<\delta ) \leq \gamma,\]
    thus concluding the proof of Item~\ref{item:high prob pathwise}.

    We next prove Item~\ref{item:as non determin}. Keeping $u$ fixed in a full measure set as above, the previous argument gives for each $k \in \N$, a joint law $\coupm^k(dX^1, dX^2, dW)$ so that the marginals $\coupm^k(C^0([0,1]), dX^2, dW)$ and $\coupm^k(dX^1, C^0([0,1]), dW)$ are weak solutions to~\eqref{eq:main} and such that $\coupm^k(X^1|_{[0,\tau^k]} = X^2|_{[0,\tau^k]}) \leq 1/k$ for some $\tau^k \to 0$. For each $k \in \N$, let $\solm^{1,k}(dX \cond W)$ and $\solm^{2,k}(dX \cond W)$ be the conditional measures given $W$ of $X^{1,k}$ and $X^{2,k}$ respectively.

    Let 
    \[\solm^k(dX \cond W) := \frac{1}{2} \solm^{1,k}(dX \cond W) + \frac{1}{2} \solm^{2,k}(dX \cond W).\]
    We next claim that with $\W$ probability greater than $1-1/k$, the measure $\pi^{\tau^k}_*\solm^k(d\tilde X \cond W)$ is supported on more than one path. To see this, we note that if for some fixed $W$, $\pi^{\tau^k}_* \solm^k(d\tilde X \cond W)$ is supported on a single path, then it must be that $\pi^{\tau^k}_* \solm^{1,k}(d\tilde X \cond W)$ has the same support as $\pi^{\tau^k}_* \solm^{2,k}(d\tilde X \cond W)$. Then, for that $W$, we must have that $X^1|_{[0,{\tau^k}]} = X^2|_{[0,{\tau^k}]}$, but this happens with probability less than $1/k,$ thus giving the claim.

    We now define the measures
    \begin{equation}
    \label{eq:conditionals}
    \solm(dX \cond W) := \sum_{k=1}^\infty 2^{-k}\solm^k(dX \cond W) \quad \text{and} \quad  \solm(dX, dW) := \solm(dX \cond W) \W(dW).
    \end{equation}
    We note that for any $\ep>0$, $\pi^\ep_* \solm(d\tilde X \cond W)$ is supported on the union of the supports of $\pi^\ep_* \solm^k(d\tilde X \cond W)$, hence by the claim above is $\W$-almost surely supported on more than one path, since $\tau^k$ is eventually less than $\ep$. Thus to conclude the proof of Item~\ref{item:as non determin}, we just need to see that $\solm(dX,dW)$ defines a weak solution to~\eqref{eq:main}, for which we use Lemma~\ref{lem:disintegration}. It is clear that $\solm(C^0([0,1]), dW) =\W(dW)$ by definition. That $(X,W)$ solves~\eqref{eq:main} $\solm$ almost surely is also clear, since $\solm$ is built from other weak solution measures. The only condition to verify is that for all $t \in [0,1]$, $W\mapsto\pi^t_*\solm(d\tilde{X}\cond W)$ is $\overline{\sigma(W|_{[0,t]})}^{\mathbb{W}}$ measurable. This is, however, clear by construction and Lemma~\ref{lem:disintegration}, since the marginals of $\coupm^k$ are weak solutions, we have for all $j \in \{1,2\}, k \in \N,$ $W\mapsto\pi^t_*\solm^{j,k}(d\tilde X \cond W)$ is $\overline{\sigma(W|_{[0,t]})}^{\mathbb{W}}$ measurable. We thus conclude the proof of Item~\ref{item:as non determin}.

    Finally, we prove Item~\ref{item:weak unique as pathwise nonunique}. We continue letting $u$ be in the same full measure set and now suppose the~\eqref{eq:main} admits a unique weak solution $\solm(dX, dW)$, which we disintegrate into conditionals, $\solm(dX \cond W) \W(dW)$. We fix $\ep>0$.
    
    We first claim that for $\W$-a.e. $W$, $\pi^\ep_* \solm(d\tilde X \cond W)$ does not have an atom with mass larger than $1/2$. To see this, suppose for the sake of contradiction that with $\W$-probability greater than $\delta>0$, $\pi^\ep_* \solm(d\tilde X \cond W)$ has an atom with mass greater than $1/2+\delta$. By Item~\ref{item:high prob pathwise} and weak uniqueness, there exists a measure $\coupm(dX^1, dX^2, dW)$ such that $\coupm(dX, C^0([0,1]), dW) = \coupm(C^0([0,1]), dX,dW) = \solm(dX,dW)$ and $\coupm(X^1|_{[0,\ep]} = X^2|_{[0,\ep]}) \leq \delta^2$.

    However---conditionally on a set of $\W$ measure greater than $\delta$---$\pi^\ep_*\solm(d\tilde X \cond W)$ has an atom with mass at least $1/2+\delta$, so we must have that $X^1|_{[0,\ep]} = X^2|_{[0,\ep]}$ with $\coupm( dX^1, dX^2 \cond W)$-probability at least $2\delta$. Thus in total, we have that
    \[\coupm(X^1|_{[0,\ep]} = X^2|_{[0,\ep]}) \geq 2\delta^2 > \delta^2,\]
    which gives the desired contradiction and proves the claim.

    In order to conclude we apply Lemma~\ref{lem:coupling no diagonal} to $\pi^\ep_* \solm(d\tilde X \cond W)$ to get a measurable coupling $\coupm^\ep(d\tilde X^1, d\tilde X^2 \cond W)$ that charges no mass to the diagonal: $\coupm^\ep\big(\{(\tilde X, \tilde X) : \tilde X \in C^0([0,\ep])\}\big)=0$. Thus to conclude we just need to extend the coupling to all of $[0,1]$, getting a coupling $\coupm(dX^1, dX^2, dW)$ such that $\pi^\ep_* \coupm(d\tilde X^1, d \tilde X^2 \cond W) = \coupm^\ep(d\tilde X^1, d \tilde X^2 \cond W)$ (while maintaining the correct marginalization properties of $\coupm$). This is however straightforwardly done by disintegration.
\end{proof}

\appendix

\section{Weak solution theory}\label{appen:weak_sol_theory}

The purpose of this appendix is to prove that the disintegration-based definitions of weak and strong solutions given in Lemma~\ref{lem:disintegration} are equivalent to the classical definitions, as well as the Yamada--Watanabe Theorem as given in Lemma~\ref{lem:Yamada-Watanabe} and the compactness of weak solutions as given in Lemma~\ref{lemma:compactness}. We also provide the proof of Lemma~\ref{lem:coupling no diagonal}.

Throughout, we will have our drift field $u \in L^1_t C^0_x$ fixed. $(X,W)$ denote the canonical coordinates of a solution-noise pair, with $\W$ denoting the law of the driving noise and $\solm$ denoting the law of the pair. We let $\mathbb{E}$ denote the expectation with respect to $\solm$. For $t\in[0,T]$ we recall that $\pi^t$ denotes the projection map $\pi^t\gamma(s)=\gamma(s)$ for $s\in[0,t]$. We thus have that 
\[\pi^tX=X|_{[0,t]}\quad\text{and}\quad \pi^tW=W|_{[0,t]}.\]

We now prove Lemma~\ref{lem:disintegration}.

\begin{proof}[Proof of Lemma~\ref{lem:disintegration}]
We first prove Item~\ref{item:weak disintegration}. To this end, we fix $t\in[0,T]$. By definition, $\solm$ is a weak solution if and only if $\pi^tX$ and $W$ are conditionally independent given $\pi^tW$. This is equivalent to saying that for all bounded and continuous functions $f:C^0([0,t])\rightarrow \R$ and $g:C^0([0,T])\rightarrow \R$ it holds that
\begin{align*}
\mathbb{E}[f(\pi^tX)g(W)]=\mathbb{E}\big[\mathbb{E}[f(\pi^tX)g(W)\cond \pi^tW]\big]&=\mathbb{E}\big[\mathbb{E}[f(\pi^tX)\cond \pi^tW]\mathbb{E}[g(W)\cond \pi^tW]\big]
\\&=\mathbb{E}\Big[\mathbb{E}\big[ \mathbb{E}[f(\pi^tX)\cond \pi^tW]g(W)\cond \pi^tW\big]\Big]
\\&=\mathbb{E}\big[\mathbb{E}[f(\pi^tX)\cond \pi^tW]g(W)\big],
\end{align*}
where the third equality uses that $\mathbb{E}[f(\pi^tX)\cond \pi^tW]$ is $\sigma(\pi^tW)$ measurable. Using the definition of the conditional expectation, we always have that 
\[\mathbb{E}\big[\mathbb{E}[f(\pi^tX)\cond W] g(W)\big]=\mathbb{E}[f(\pi^tX)g(W)].\]
Thus the conditional independence is equivalent to
\[\mathbb{E}\big[\mathbb{E}[f(\pi^tX)\cond W] g(W)\big]=\mathbb{E}\big[\mathbb{E}[f(\pi^tX)\cond \pi^tW]g(W)\big],\]
for all suitable $f$ and $g$, and thus that $\mathbb{E}[f(\pi^tX)\cond W]=\mathbb{E}[f(\pi^tX)\cond \pi^tW]$ $\mathbb{W}$-almost surely.

Now, let $\solm(dX,dW)=\solm(dX\cond W)\W(dW)$ be a disintegration of $\solm$ with respect to $W$ so that for any $f$ as above
    \[\mathbb{E}[f(\pi^tX)\cond W]=\int_{C^0([0,t],\R^d)}  f(\tilde{X})\pi^t_*\solm(d\tilde{X}\cond W)\quad \mathbb{W}\text{-a.s.}\]
Since $\mathbb{E}[f(\pi^tX)\cond \pi^tW]$ is $\sigma(\pi^tW)$ measurable, the conditional independence of $\pi^tX$ and $W$ from $\pi^tW$ is thus equivalent to the function
\[W\mapsto\int_{C^0([0,t],\R^d)}  f(\tilde{X})\pi^t_*\solm(d\tilde{X}\cond W)\]
being $\overline{\sigma(\pi^tW)}^\mathbb{W}$-measurable for all $f$. This is in turn equivalent to $W\mapsto\pi^t_*\solm(d\tilde{X}\cond W)$ being $\overline{\sigma(\pi^tW)}^\mathbb{W}$ measurable as claimed.

We now prove Item~\ref{item:strong disintegration}. Suppose that $\solm$ is a strong solution so that $X$ is $\overline{\sigma(W)}^\solm$ measurable. This is equivalent to the existence of a measurable map $\Phi:C^0([0,T])\rightarrow C^0([0,T])$ such that $X=\Phi(W)$ $\solm$-almost surely, which is in turn equivalent to $\solm(dX\cond W)=\delta_{\Phi(W)}(dX)$ for $\mathbb{W}$-a.e. $W$. Since $W\mapsto \solm(dX\cond W)$ is a measurable map, this is in turn equivalent to $\solm(dX\cond W)$ being supported on a singleton $\mathbb{W}$-almost surely.
\end{proof}

Next we show Lemma~\ref{lem:Yamada-Watanabe}, which is equivalent to the Yamada--Watanabe Theorem~\cite{yamada_uniqueness_1971}.

\begin{proof}[Proof of Lemma~\ref{lem:Yamada-Watanabe}]
Let us first suppose that the SDE satisfies pathwise uniqueness. Then, suppose that $\solm$ and $\tilde\solm$ are any two weak solutions with respective disintegrations $\solm(dX \cond W)$ and $\tilde\solm(dX\cond W)$ with respect to $W$. Then let 
\[\coupm(dX^1,dX^2,dW)=\solm(dX^1\cond W)\tilde\solm(dX^2\cond W)\mathbb{W}(d W),\]
so that the marginal of $\coupm$ over $(X^1, W)$ is equal to $\solm$, and the marginal of $\coupm$ over $(X^2, W)$ is equal to $\tilde{\solm}$. Pathwise uniqueness thus implies that $X^1=X^2$ $\coupm$-almost surely, or equivalently $\W$-a.s.,
\[X^1=X^2,\qquad \solm(dX^1\cond W)\tilde\solm(dX^2\cond W)\text{-a.s.}\]
This then implies that
\[\solm(dX^1\cond W)\tilde\solm(dX^2\cond W)=\delta_{\Phi( W)}(dX^1)\delta_{\Phi( W)}(dX^2),\qquad\mathbb{W}\text{-a.s.}\]
for some $\Phi:C^0([0,T])\rightarrow C^0([0,T])$, which can be taken to be measurable due to the measurability of the conditional measure $\solm(dX^1 \cond W)$. We thus see that it must be the case that $\solm=\tilde\solm$: hence we have weak uniqueness and that both are strong solutions, allowing us to conclude this direction.

The other direction is almost immediate since if the SDE has weak uniqueness and any solution is a strong solution, then there must exist a measurable function $\Phi:C^0([0,T])\rightarrow C^0([0,T])$ such that for a weak solution $\solm$, we have that $\solm(dX,dW)=\delta_{\Phi(W)}(dX)\mathbb{W}(dW).$ Thus, if $\coupm$ is the joint law of two weak solutions, it must be the case that
\[\coupm(dX^1,dX^2,d W)=\delta_{\Phi( W)}(dX^1)\delta_{\Phi( W)}(dX^2)\mathbb{W}(d W),\]
i.e.\ $X^1=X^2$ $\coupm$-almost surely.
\end{proof}

We finally prove the compactness Lemma~\ref{lemma:compactness} for weak solutions. We recall that $\mathbb{W}$ is a regular driving noise, as defined in Definition~\ref{def:regular_driving_noise}.

\begin{proof}[Proof of Lemma~\ref{lemma:compactness}]
Suppose that $\mathbb{W}$ is a regular driving noise. We first note that since $C^0([0,T])$ is a Polish space, for all $\eps>0$ there exists a compact subset $K^\eps$ such that $\mathbb{W}(K^\eps)\geq 1-\eps$. Additionally, the Arzel\`a--Ascoli characterization of compact sets implies that $K^\eps$ is a bounded and uniformly equicontinuous family of functions. For all $\ep>0$, let $S^\ep \subseteq C^0([0,T])$ be defined by
\[S^\ep := \Big\{X : t \geq \tau,\, X_t= z + \int_\tau^t u(s,X_s)\,ds +  W_t - W_\tau; t \leq \tau,\,X_t = z; |z-y| \leq 1; \tau \in [0,T]; W \in K^\ep\Big\}.\]
One can readily verify that $S^\ep$ is closed, bounded, and uniformly equicontinuous, and thus compact, using that $K^\eps$ is a bounded and uniformly equicontinuous family and that $u \in L^1_t C^0_x$. This implies that $S^\eps\times K^\eps$ is compact as well. By our hypotheses on the support of $\solm^n$, for $n$ large enough,
\[\solm^n(S^\eps\times K^\eps)= \mathbb{W}(K^\eps),\]
thus the $\solm^n$ form a tight family of measures. There thus exists $\solm\in\mathcal{P}(C^0([0,T]) \times C^0([0,T]))$ such that $\solm^n\rightarrow \solm$ weakly up to a subsequence.

    Our goal is now to show that $\solm$ is a weak solution for the SDE~\eqref{eq:SDE_limit}, that is that it satisfies Items~\ref{item:marginalizes correct}-\ref{item:solves equation}. Item~\ref{item:marginalizes correct} is immediate as the marginal laws of $\solm^n$ are all equal to $\mathbb{W}$. Item~\ref{item:solves equation} also follows from the fact that any element $(X,W)$ of the support of $\solm$ has a sequence $(X^n,W^n)$ such that $(X^n,W^n)$ is in the support of $\solm^n$ and $(X^n,W^n) \to (X,W)$ in $C^0([0,T]) \times C^0([0,T])$. Using the integral equation, one can then readily see that Item~\ref{item:solves equation} follows.    
    
    All that remains is to conclude that $\solm$ satisfies Item~\ref{item:non-anticipatory}. By the same logic as that used in the proof of Lemma~\ref{lem:disintegration}, this is equivalent to showing that for all bounded and continuous $f:C^0([0,t])\rightarrow \R,g:C^0([0,T])\rightarrow \R,$ and $t\geq 0$,
\begin{equation}\label{eq:integrated_nonanticipatory}
      \mathbb{E}\big[f(\pi^tX)g(W)\big]=\mathbb{E}[f(\pi^tX)\mathbb{E}[g(W)\cond\pi^tW]\big],
    \end{equation}
    where the expectations throughout---consistent with the notation of this section---are being taken with respect to the measure $\solm$. Letting $G(\pi^t W):=\mathbb{E}[g(W)\cond\pi^tW]$, since $\mathbb{W}$ is a regular driving noise, $G$ is bounded and continuous, and the above reads
    \[\mathbb{E}\big[f(\pi^tX)g(W)\big]=\mathbb{E}\big[f(\pi^tX)G(\pi^t W)\big].\]
    Since~\eqref{eq:integrated_nonanticipatory} holds with $\solm$ replaced by $\solm^n$ for all $n\geq 1$ and $f(\pi^tX)g(W)$ and $f(\pi^tX)G(\pi^t W)$ are both bounded continuous functions of $(X,W)$, after taking limits, we find that~\eqref{eq:integrated_nonanticipatory} holds for $\solm$ as well.
\end{proof}

We finally prove Lemma~\ref{lem:coupling no diagonal}.

\begin{proof}[Proof of Lemma~\ref{lem:coupling no diagonal}]
    Since $\mathcal{Y}$ is Borel isomorphic to $[0,1]$~\cite[Corollary 6.8.8]{bogachev_measure_2007}, we can without loss of generality take $\mathcal{Y}=[0,1].$ Then for any measure $\mu \in \mathcal{P}([0,1])$, let $Q^\mu : [0,1] \to [0,1]$ denote the quantile map $Q^\mu(p) := \inf \{ t \in [0,1]: \mu([0,t]) \geq p\}$. Letting $\lambda$ denote the Lebesgue measure on $[0,1]$, we recall $Q^\mu_* \lambda = \mu.$ Let $\phi: [0,1) \to [0,1)$ be given by $\phi(y) = y + \frac{1}{2} \mod 1$. Then let $\lambda^2 \in \mathcal{P}([0,1]^2)$ be given by
    \[\lambda^2(dy^1,dy^2) := \int \delta_{z}(dy_1)\delta_{\phi(z)}(dy^2)\,dz,\]
    and $Q^{\mu,2}(y^1,y^2) = (Q^\mu(y^1), Q^\mu(y^2))$.
    Finally, let $\Gamma : \mathcal{P}([0,1]) \to \mathcal{P}([0,1]^2)$ be given by
    \[\Gamma(\mu) := Q^{\mu,2}_* \lambda^2.\]
    We then readily see that $\Gamma(\mu)(dy, [0,1]) = \Gamma(\mu)([0,1],dy) = \mu(dy).$ It is also clear that $\Gamma$ is a Borel measurable map. We thus let $\gamma_x := \Gamma(\mu_x)$.

    In order to conclude, we just need to see that if $\mu$ has no atoms with mass greater than $1/2$, then $\Gamma(\mu)(\{(y,y) : y \in [0,1]\}) =0.$ Note that by construction
    \[\Gamma(\mu)(\{(y,y) : y \in [0,1]\}) = \P\big(Q^\mu(Z) = Q^\mu(\phi(Z))\big),\]
    where $Z$ has the uniform distribution on $[0,1].$ Suppose that there is a positive measure set of such $Z$, then (up to interchanging $z$ and $\phi(z)$) there must be $ 0 \leq z^1< z^2<1/2$ distinct such that $Q^\mu(z^i) = Q^\mu(\phi(z^i))$ for $i =1,2$. Then since $Q^\mu$ is increasing, we must have that $Q^\mu(z^1) = Q^\mu(z^1+1/2) = Q^\mu(z^2)  =Q^\mu(z^2+1/2) =:t$. Then we have that $\mu([0,t]) \geq z^2+1/2$ and $\mu([0,t)) \leq z^1$. Thus $\mu(\{t\}) \geq 1/2 + z^2 - z^1 >1/2$, contradicting our hypothesis. Thus we get the claim and so conclude the proof.
\end{proof}

{\small
\bibliographystyle{alpha}
\bibliography{cleanreferences}
}

\end{document}